\let\csname equation*\endcsname\relax
\let\csname endequation*\endcsname\relax
\newtheorem{thm}{Theorem}[section]
\newtheorem{prop}[thm]{Proposition}
\newtheorem{lemma}[thm]{Lemma}
\newtheorem{cor}[thm]{Corollary}
\newtheorem{rem}[thm]{Remark}
\theoremstyle{definition}
\newtheorem{example}[thm]{Example}
\newtheorem{definition}[thm]{Definition}
\newtheorem{assmpt}{Assumption}
\newcommand{\N}{\mathbb{N}}
\newcommand{\R}{\mathbb{R}}
\newcommand{\norm}[1]{\left\Vert #1 \right\Vert}
\renewcommand{\H}{{\cal H}}
\newcommand{\X}{{\cal X}}
\newcommand{\Y}{{\cal Y}}
\newcommand{\Z}{{\cal Z}}
\newcommand{\dualX}{{\cal X^*}}
\newcommand{\dom}{\mathrm{dom}}
\newcommand{\ran}{\mathrm{ran}}
\newcommand{\id}{\mathrm{id}}
\newcommand{\argmin}{\mathrm{argmin}}
\newcommand{\calN}{{\cal N}}
\newcommand{\calP}{{\cal P}}
\newcommand{\defi}{:=}
\newcommand{\fwd}{A}
\newcommand{\adj}{B}
\renewcommand{\d}{\mathrm{d}}
\newcommand{\expd}{\alpha}
\newcommand{\expr}{\beta}
\newcommand{\st}{\,:\,}
\newcommand{\sgn}{\mathrm{sgn}}
\newcommand{\dataX}{u^\dagger}
\newcommand{\proj}{\mathrm{proj}}
\newcommand{\prox}{\mathrm{prox}}
\newcommand{\tv}{\mathrm{TV}}
\newcommand{\bv}{\mathrm{BV}}
\newcommand{\LB}[1]{\textcolor{red}{#1}}
\renewcommand{\LB}[1]{#1}
\newcommand{\minrev}[1]{\textcolor{red}{#1}}
\renewcommand{\minrev}[1]{#1}
\begin{document}

\title[Solution paths of variational regularization methods for inverse problems]{Solution paths of variational regularization methods for inverse problems}

\author{Leon Bungert \& Martin Burger}

\address{Department Mathematik, Friedrich-Alexander Universit\"{a}t Erlangen-N\"{u}rnberg, Cauerstrasse 11, 91058 Erlangen, Germany.}
\ead{\{leon.bungert,martin.burger\}@fau.de}
\vspace{10pt}
\begin{indented}
\item[]\today
\end{indented}

\begin{abstract}
We consider a family of variational regularization functionals for a generic inverse problem, where the data fidelity and regularization term are given by powers of a Hilbert norm and an absolutely one-homogeneous functional, respectively, and the regularization parameter is interpreted as artificial time. We investigate the small and large time behavior of the associated solution paths and, in particular, prove finite extinction time for a large class of functionals. Depending on the powers, we also show that the solution paths are of bounded variation or even Lipschitz continuous. In addition, it will turn out that the models are ``almost'' mutually equivalent in terms of the minimizers they admit. Finally, we apply our results to define and compare two different nonlinear spectral representations of data and show that only one of it is able to decompose a linear combination of nonlinear eigenvectors into the individual eigenvectors. Finally, we also briefly address piecewise affine solution paths.
\end{abstract}
\noindent{\it Keywords}: inverse problems, variational methods, solution paths, regularity, finite extinction time,  nonlinear spectral theory, nonlinear spectral decompositions 
%
%
%
%

\section{Introduction}

A standard approach for approximating solutions of an ill-posed inverse problem
\begin{equation}\label{eq:IP}
\fwd u=f\tag{IP}
\end{equation}
with possibly noise-corrupted data $f$ consists in variational regularization. To this end, one typically aims at solving the optimization problem
\begin{equation}\label{mod:var_P}
\min_u \mathcal{D}(\fwd u,f)+t\mathcal{R}(u)\tag{P}
\end{equation}
where the {data fidelity} term~$\mathcal{D}$ enforces~$\fwd u$ to be close to~$f$ and the {regularization} functional~$\mathcal{R}$ incorporates prior knowledge about the solution (sparsity, smoothness, etc.) into the model. The real number~$t>0$ is typically referred to as {regularization parameter} and balances data fidelity and regularization. One of the most famous examples for~\eref{mod:var_P} within the field of mathematical imaging is the ROF denosing model~\cite{rudin1992nonlinear}
\begin{equation}\label{mod:ROF}\tag{ROF}
\min_{u\in\bv(\Omega)}\frac{1}{2}\norm{u-f}^2_{L^2(\Omega)}+t\,\tv(u).
\end{equation}
Here, $t$ should be chosen dependent on the noise level of $f$ to obtain a satisfyingly denoised image. In contrast, the parameter~$t$ can also be interpreted as an artificial time that steers the solution of~\eref{mod:var_P} from being under-regularized to over-regularized as time increases, or speaking in the ROF context, that successively and edge-preservingly smoothes~$f$ until a constant state is reached. In this manuscript we will refer to the maps $t\mapsto\{u_t\st u_t\text{ solves }\eref{mod:var_P}\}$ and $t\mapsto\{\fwd u_t\st u_t\text{ solves }\eref{mod:var_P}\}$ as \emph{solution path} and \emph{forward solution path}, respectively. Recently, this and similar evolutions, which can be viewed as a scale space representation of the input~$f$, have been used to define nonlinear spectral multiscale decompositions, e.g.~\cite{burger2015spectral,burger2016spectral,gilboa2013spectral,
gilboa2014total,gilboa2016nonlinear,gilboa2018nonlinear}. \LB{Hence, in this context the solution of \eref{mod:ROF} becomes interesting even if the data $f$ is not noisy at all.} Typically, these decompositions involve computing derivatives with respect to the parameter~$t$ of the (forward) solution path wherefore it is interesting to study its regularity. 

Furthermore, not only in the ROF model but also in general, a very popular choice for the data fidelity in~\eref{mod:var_P} is the squared norm of some Hilbert space whereas the regularization functional is often assumed to be absolutely one-homogeneous. However, there is often no substantial justification for preferring such models over others. In particular, one could consider arbitrary powers of a Hilbert space norm~$\norm{\cdot}$ and of an absolutely one-homogeneous functional~$J$ instead which leads to the \emph{weighted problem}
\begin{equation}\label{mod:weighted_var_P}\tag{wP}
\min_u \frac{1}{\expd}\norm{\fwd u-f}^\expd+\frac{t}{\expr}J(u)^\expr
\end{equation}
with weights~$\expd,\expr\geq1$. Note that the multiplicative scalings~$1/\expd$ and~$1/\expr$ do not restrict generality since they can be absorbed into~$t$. Indeed there are only few contributions in literature that consider general powers of norms (cf.~\cite{he2006iterative,belloni2011square} for a Hilbert norm with~$\expd=1$ and~\cite{schuster2012regularization} for error analysis for a Banach norm with fixed~$\expd\geq1$) or a different scaling of an absolutely one-homogeneous regularization functional~\cite{deswarte2018minimax}. While such modifications seem only minor at first glance and the resulting models will be equivalent for parameters~$t$ in a certain interval, we will see that outside this interval the qualitative behavior of the models differs significantly. In a nutshell, the models disintegrate into four classes, depending on whether~$\expd$ or~$\expr$ are larger or equal than~$1$. If both parameters equal~$1$, due to the homogeneity of~$J$, the corresponding problem~\eref{mod:weighted_var_P} becomes \emph{contrast invariant}, meaning that if~$u$ solves~\eref{mod:weighted_var_P} with some~$f$ then~$cu$ solves the problem where~$f$ is replaced by~$cf$ and~$c>0$.

Our precise setting in this paper is as follows: Let~$(\X,\norm{\cdot}_\X)$ be the dual space of an separable predual Banach space~$\Y$ and let~$(\H,\langle\cdot,\cdot\rangle)$ be a Hilbert space with norm $\norm{\cdot}_\H\defi\sqrt{\langle\cdot,\cdot\rangle}$. We consider a bounded linear forward operator~$\fwd:\X\to\H$ mapping between these spaces and denote by~$\calN(\fwd)$ and~$\ran(\fwd)$ its null-space and range. Let furthermore $J:\X \rightarrow \R_+\cup \{+\infty\}$ be an absolutely one-homogeneous, weak$^*$ lower semi-continuous, and proper convex functional, whose null-space and {effective domain} we denote by $\calN(J)\defi\{u\in\X\st J(u)=0\}$ and $\dom(J)\defi\{u\in\X\st J(u)<\infty\}$, respectively. 
For parameters~$\expd,\expr\geq1$,~$t\geq0$, and given data~$f\in\H$ we define functionals
\begin{align}\label{eq:functional}
	&E^{\expd,\expr}_t(u;f) \defi \frac{1}{\expd}\Vert \fwd u - f \Vert^\expd_\H + \frac{t}{\expr} J(u)^\expr,\quad u\in\X,
\end{align}
which we aim to minimize. If~$f\in\ran(\fwd)$, meaning that there exists~$\dataX\in\X$ with~$\fwd\dataX=f$, we assume that~$\dataX\notin\calN(J)$. This is the only interesting scenario since otherwise~$\dataX$ is a minimizer of~$E^{\expd,\expr}_t(\cdot;f)$ for any~$t \geq 0$.

The remainder of this work is organized as follows: We will perform a thorough analysis of the variational problem at hand in an infinite dimensional setting in \sref{sec:analysis}. A special emphasis will lie on the small and large time behavior and of the so called {solution path} and uniqueness of the {forward solution path}. Furthermore, we briefly demonstrate the equivalence of some classes of the models under consideration. Using these results, \sref{sec:regularity} will deal with regularity of the forward solution path depending on the weights~$\expd$ and~$\expr$. In \sref{sec:spectral} we will indicate how our results can be used to define nonlinear spectral representations. We undertake numerical experiments that illustrate our theoretical findings in \sref{sec:numerics} and conclude with some open questions. Basic notation and relevant notions from convex analysis, as well as fundamental properties of generalized orthogonal complements and projections with respect to the forward operator $\fwd$ are collected in the appendix. 

\section{Analysis of the variational problem}\label{sec:analysis}

In this section we will provide a basic analysis of the variational problem of minimizing~\eref{eq:functional}. We start with fixed~$t$ and then proceed towards the behaviour of the solution path for small respectively large~$t$, which can allow for exact penalization respectively finite time extinction.

\subsection{Basic properties of the variational problem} 
 
In the following, we make three assumptions, related to the forward operator~$A$ and its interplay with the regularization functional~$J$ which we make use of throughout this manuscript:
\begin{assmpt}\label{assmpt:A-norm}
$\norm{u}_\fwd\defi\norm{\fwd u}_\H$ is a norm on~$\calN(J)$ which is equivalent to the restriction of~$\norm{\cdot}_\X$ to~$\calN(J)$.
\end{assmpt}

Note that for Assumption~\ref{assmpt:A-norm} to hold it is sufficient to have~$\calN(J)\cap\calN(\fwd)=\{0\}$ and $\dim\calN(J)<\infty$ together with an appropriate definition of~$\X$ which is satisfied in most cases. The second assumption is a generalized Poincar\'{e} inequality which assures a weaker form of coercivity of~$J$. To this end we define the map
\begin{align}\label{eq:orth_proj}
\calP^\fwd:
\begin{cases}
&\H\rightarrow\X,\\
&f\mapsto\calP^\fwd(f)\defi\argmin_{u\in\calN(J)}\norm{\fwd u - f}_\H,
\end{cases}
\end{align}
whose well-definedness and important properties are proved in \sref{sec:orth_proj} of the appendix. We call this map the $\fwd$-\emph{orthogonal projection} onto the null-space of~$J$.

\begin{assmpt}\label{assmpt:poincare}
There is~$C>0$ such that 
$$\norm{u-\calP^\fwd(\fwd u)}_\X\leq C J(u),\quad\forall u\in\X.$$
\end{assmpt} 
Apart from guaranteeing coercivity, this assumption will be utilized to study the small and large time behavior of the solution path.

\begin{assmpt}\label{assmpt:w*-w-c}
The operator~$\fwd$ is weak$^*$-to-weak continuous, that is if~$(u_k)\subset\X$ is a sequence which weakly$^*$ converges to some~$u\in\X$, then~$(\fwd u_k)$ weakly converges to~$\fwd u$ in~$\H$.
\end{assmpt}
This assumption is guaranteed if~$\fwd=\adj^*$ with some bounded linear operator~$\adj:\H\to\Y$. However, in some cases it is not obvious how to ensure this condition. In the following remark we demonstrate how an appropriate choice of the space $\X$ can accomplish this.

\begin{rem}\label{rem:predual_BV}
In most cases the space~$\X$ is solely determined by the regularization functional, but in some very mildly ill-posed cases the data fidelity needs to be taken into account as well in order to satisfy the assumptions. The canonical case is indeed~$\tv$ in multiple dimensions.  We define~$\X\defi\bv\cap L^2$ with norm \mbox{$\|\cdot\|_\X\defi\|\cdot\|_\bv+\|\cdot\|_{L^2}$}, choose~$\H=L^2$, and let~$\fwd$ be the continuous embedding operator. A predual of~$\X$ is given by~$\Y\defi \Z+L^2$ where~$\Z^*=\bv$. Since weak$^*$ convergence in~$\X$ implies in particular weak~$L^2$-convergence, the embedding~$\X\hookrightarrow\H$ is weak$^*$-to-weak continuous. More general, it can be checked that the dual of a sum of Banach spaces equals the intersection of the duals. 
\end{rem}

Now we provide some basic results concerning the minimization problem for the energy functional~$E^{\expd,\expr}_t(\cdot;f)$. \LB{We start with an existence result which follows by standard arguments using Assumptions~\ref{assmpt:A-norm}-\ref{assmpt:w*-w-c}.}

\begin{thm}[Existence of minimizers]\label{thm:existence}
Let Assumptions~\ref{assmpt:A-norm}-\ref{assmpt:w*-w-c} hold. For each~$f \in \H$,~$t >0$, and~$\expd,\expr\geq 1$ there exists a minimizer~$u_t$ of~$E^{\expd,\expr}_t(\cdot;f)$. If $\fwd$ is injective and $\expd>1$ this minimizer is unique.
\end{thm}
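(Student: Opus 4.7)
The plan is to apply the direct method of the calculus of variations. Since $E^{\expd,\expr}_t(0;f)=\|f\|_\H^\expd/\expd<\infty$ the infimum is finite, so I would pick a minimizing sequence $(u_k)\subset\X$. From $\sup_k E^{\expd,\expr}_t(u_k;f)<\infty$ I immediately get uniform bounds on $\|\fwd u_k-f\|_\H$ (hence on $\|\fwd u_k\|_\H$) and on $J(u_k)$.

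The crucial step is promoting this to a weak$^*$ bound on $u_k$ in $\X$. I would decompose
$$u_k = \bigl(u_k-\calP^\fwd(\fwd u_k)\bigr) + \calP^\fwd(\fwd u_k).$$
The first summand is controlled by $CJ(u_k)$ via the Poincar\'e-type Assumption~\ref{assmpt:poincare}, hence uniformly in $k$. For the second summand, which lies in $\calN(J)$, I would test the optimality of $\calP^\fwd(\fwd u_k)$ against $0\in\calN(J)$ to obtain $\|\fwd \calP^\fwd(\fwd u_k)\|_\H\leq 2\|\fwd u_k\|_\H$, and then invoke Assumption~\ref{assmpt:A-norm} to transfer this $\fwd$-norm bound into a $\|\cdot\|_\X$-bound. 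Since $\Y$ is separable, Banach--Alaoglu yields a (non-relabelled) subsequence with $u_k\rightharpoonup^* u$ in $\X$.

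Next I would check weak$^*$ lower semicontinuity of $E^{\expd,\expr}_t(\cdot;f)$. The regularization term $tJ^\expr/\expr$ is weak$^*$ lsc because $J$ is weak$^*$ lsc and $s\mapsto s^\expr/\expr$ is continuous and non-decreasing on $[0,\infty)$. For the data fidelity, Assumption~\ref{assmpt:w*-w-c} gives $\fwd u_k\rightharpoonup \fwd u$ in $\H$, so weak lower semicontinuity of the Hilbert norm combined again with monotonicity of $s\mapsto s^\expd/\expd$ yields $\|\fwd u-f\|_\H^\expd\leq\liminf_k\|\fwd u_k-f\|_\H^\expd$. Adding the two inequalities shows $E^{\expd,\expr}_t(u;f)\leq\liminf_k E^{\expd,\expr}_t(u_k;f)$, so $u$ is a minimizer.

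For uniqueness under injectivity of $\fwd$ and $\expd>1$, I would argue that the data fidelity becomes strictly convex on $\X$: if $u\neq v$ then $\fwd u\neq \fwd v$, and strict convexity of the Hilbert norm composed with the strictly convex and non-decreasing map $s\mapsto s^\expd/\expd$ on $[0,\infty)$ gives strict convexity of $u\mapsto\|\fwd u-f\|_\H^\expd/\expd$. Adding the convex term $tJ(u)^\expr/\expr$ preserves strict convexity and forces uniqueness. The only real obstacle is the control of the null-space component $\calP^\fwd(\fwd u_k)$; everything else is a routine assembly of the three standing assumptions.
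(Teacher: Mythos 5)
Your argument is correct and is exactly the ``standard argument using Assumptions~\ref{assmpt:A-norm}--\ref{assmpt:w*-w-c}'' that the paper alludes to without writing out: coercivity via the decomposition $u_k=(u_k-\calP^\fwd(\fwd u_k))+\calP^\fwd(\fwd u_k)$, with the first part controlled by the Poincar\'e inequality and the null-space part by the $\fwd$-norm equivalence, followed by sequential Banach--Alaoglu (using the separable predual) and weak$^*$ lower semicontinuity of both terms. The uniqueness step is also sound, provided one notes---as you implicitly do---that strict convexity of $u\mapsto\Vert \fwd u-f\Vert_\H^\expd/\expd$ needs both the strict convexity of $s\mapsto s^\expd$ (when the residual norms differ) and the strict convexity of the Hilbert ball (when they coincide).
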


Now we turn to optimality conditions for minimizers. In some of the following statements we will utilize the \emph{range condition} 
\begin{equation}\label{eq:range_cond}\tag{RC}
\exists\dataX\in\dom(J)\st\fwd\dataX=f
\end{equation}
which applies if the inverse problem \eref{eq:IP} possesses a (possibly not unique) solution. For convenience we also define~$B^\H_1\defi\{q\in\H\;:\;\norm{q}_\H\leq1\}$.
\begin{thm}[Optimality conditions]\label{thm:opt_cond}
Let~$t > 0$ and~$\expd,\expr\geq 1$,~$u_t$ be a minimizer of~$E^{\expd,\expr}_t(\cdot;f)$. We distinguish between two cases: If~$u_t=\dataX$ for some $\dataX$ which satisfies~\eref{eq:range_cond}, then $\expd=1$ holds necessarily and there is~$q\in B^\H_1$ such that 
\begin{equation}\label{eq:opt_conf_f}
	p_t\defi -\frac{\fwd^*q}{t J(\dataX)^{\expr-1}}\in\partial J(\dataX).
\end{equation}
If $u_t$ is such that $\fwd u_t\neq f$, it holds
\begin{equation}\label{eq:opt_cond_u}
	p_t \defi \frac{\fwd^*(f -\fwd u_t)}{t \Vert\fwd u_t - f\Vert_\H^{2-\expd}J(u_t)^{\expr-1}} \in \partial J(u_t),
\end{equation}
where we use the convention~$0^0=1$ if~$\expr=1$ and~$J(u_t)=0$.
\end{thm}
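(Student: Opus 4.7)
The natural approach is to apply Fermat's rule at the minimizer and unpack the resulting subdifferential inclusion using the convex sum rule together with the chain rule for the two building blocks $D_\expd(v)\defi \frac{1}{\expd}\norm{v}_\H^\expd$ and $R_\expr(u)\defi \frac{t}{\expr}J(u)^\expr$. Since $u\mapsto D_\expd(\fwd u-f)$ is finite and continuous on the whole space $\X$ (the composition of a continuous convex function on $\H$ with a bounded linear operator) and $R_\expr$ is proper, convex, and weak$^*$-lsc, the Moreau–Rockafellar sum rule applies and gives
\begin{equation*}
0\in \fwd^*\,\partial D_\expd(\fwd u_t-f)\,+\,\partial R_\expr(u_t).
\end{equation*}

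The second step is to describe both subdifferentials explicitly. For the data term, $D_\expd$ is Fr\'echet differentiable away from $0$ with gradient $\norm{v}_\H^{\expd-2}v$, so $\partial D_\expd(\fwd u_t-f)=\{\norm{\fwd u_t-f}_\H^{\expd-2}(\fwd u_t-f)\}$ whenever $\fwd u_t\neq f$. At $v=0$, $\partial D_\expd(0)=\{0\}$ if $\expd>1$ (since $D_\expd$ is $C^1$ there) and $\partial D_1(0)=B^\H_1$. For the regularization term, writing $h_\expr(s)\defi s^\expr/\expr$ for $s\geq 0$, the convex chain rule for the composition with the one-homogeneous functional $J$ yields $\partial R_\expr(u)=t\,h_\expr'(J(u))\,\partial J(u)=t\,J(u)^{\expr-1}\partial J(u)$ whenever $J(u)>0$, and reduces to $t\,\partial J(u)$ if $\expr=1$, respectively $\{0\}$ if $\expr>1$ and $J(u)=0$. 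The convention $0^0=1$ in the statement is exactly what unifies the $\expr=1$, $J(u_t)=0$ case with the generic formula.

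Now I would split into the two cases of the statement. \textbf{Case $\fwd u_t\neq f$:} the data subdifferential is single-valued, so the inclusion rewrites as
\begin{equation*}
\fwd^*(\fwd u_t-f)\norm{\fwd u_t-f}_\H^{\expd-2}+t\,J(u_t)^{\expr-1}\,p_t=0
\end{equation*}
for some $p_t\in\partial J(u_t)$, and solving for $p_t$ gives the announced formula. \textbf{Case $u_t=\dataX$ with $\fwd\dataX=f$:} first I would establish the necessity of $\expd=1$. If $\expd>1$, the data subdifferential at $0$ is $\{0\}$, so the optimality condition collapses to $0\in t\,J(\dataX)^{\expr-1}\partial J(\dataX)$; since $\dataX\notin\calN(J)$ by the standing hypothesis, $J(\dataX)>0$, and the prefactor can be divided out, forcing $0\in\partial J(\dataX)$. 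But this would mean $J(v)\geq J(\dataX)>0=J(0)$ for all $v$, a contradiction; hence $\expd=1$. With $\expd=1$ one has $\partial D_1(0)=B^\H_1$, so some $q\in B^\H_1$ and $p_t\in\partial J(\dataX)$ satisfy $\fwd^*q+tJ(\dataX)^{\expr-1}p_t=0$, which rearranges to~\eref{eq:opt_conf_f}.

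The main obstacle is justifying the chain rule for $R_\expr=(t/\expr)J^\expr$ in a clean way that handles all three regimes ($J(u)>0$; $J(u)=0$ with $\expr=1$; $J(u)=0$ with $\expr>1$) uniformly, since $J$ is only convex and one-homogeneous rather than smooth. The cleanest path is to observe that $h_\expr\circ J$ equals the Moreau envelope-free composition of the increasing convex function $h_\expr$ on $[0,\infty)$ with the proper convex $J$, and to invoke the standard formula $\partial(h_\expr\circ J)(u)=\{\lambda p\st p\in\partial J(u),\ \lambda\in\partial h_\expr(J(u))\}$ valid under continuity of $h_\expr$ on $[0,\infty)$. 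Everything else is bookkeeping once the sum and chain rules are in place.
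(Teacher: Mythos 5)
Your proposal is correct and follows essentially the same route as the paper: Fermat's rule plus the Moreau--Rockafellar sum rule and the chain rule for subdifferentials, explicit evaluation of $\partial\norm{\cdot}_\H$ at zero versus away from zero, and the same contradiction argument (for $\expd>1$ the condition collapses to $0\in\partial J(\dataX)$, impossible since $J(\dataX)>0$) to force $\expd=1$ in the exact-data case. The only point the paper makes explicit that you leave implicit is that for $\expr>1$ a minimizer with $\fwd u_t\neq f$ cannot lie in $\calN(J)$, which is needed so that solving for $p_t$, i.e.\ dividing by $J(u_t)^{\expr-1}$ in \eref{eq:opt_cond_u}, is legitimate.
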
 
\begin{proof}
Standard results of subgradient calculus~\cite{schuster2012regularization} allow us to calculate the subdifferential of the energy functional~\eref{eq:functional}. Note in particular that~$u\mapsto\frac{1}{\expd}\Vert \fwd u -f \Vert^\expd_\H$ is continuous, thus the subgradients of~$E^{\expd,\expr}_t(\cdot;f)$ are given by the sum of subgradients of~$\frac{1}{\expd}\Vert \fwd\cdot - f\Vert^\expd_\H$ and~$\frac{t}{\expr}J(\cdot)^\expr$. 
By the chain rule for subdifferentials, see~\cite{bauschke2017convex} for instance, the subdifferential of~$E^{\expd,\expr}_t(\cdot;f)$ in~$u\in$~$\dom(J)$ reads
\begin{align}\label{eq:subdiff_E}
\partial E^{\expd,\expr}_t(u;f)=\norm{\fwd u-f}_\H^{\expd-1}\partial\left(\norm{\fwd u-f}_\H\right)+tJ(u)^{\expr-1}\partial J(u)
\end{align}
and for any~$q\in\H$ it holds
$$\partial\norm{q}_\H=
\begin{cases}
B^\H_1,&q=0,\\
\frac{q}{\norm{q}_\H},&q\neq 0.
\end{cases}$$
Hence, the optimality condition for~$\dataX$ and~$\expd>1$ reads
$$0\in\partial E^{\expd,\expr}_t(\dataX;f)=tJ(\dataX)^{\expr-1}\partial J(\dataX)$$
which contradicts~$t>0$ since~$J(\dataX)\neq 0$, by assumption. Therefore,~$\dataX$ cannot be a minimizer for~$\expd>1$. Similarly, any minimizer~$u_t$ for~$\expr>1$ satisfies~$u_t\notin\calN(J)$ since otherwise~$f=\fwd u_t$ held true due to \eref{eq:subdiff_E}. This would contradict our non-triviality assumption on the data. Equations~\eref{eq:opt_conf_f} and~\eref{eq:opt_cond_u} follow from rewriting the condition~$0\in\partial E_t^{\expd,\expr}(u_t;f)$.
\end{proof}
\begin{rem}\label{rem:opt_cond}
Due to convexity of~$E_t^{\expd,\expr}(\cdot;f)$, conditions~\eref{eq:opt_conf_f} and~\eref{eq:opt_cond_u} are also sufficient for optimality.
\end{rem}


\LB{As we have seen in Theorem~\ref{thm:existence}, minimizers are unique under stronger assumptions on the forward operator $\fwd$. However, in the general case one can still prove that the norm of the residual and the value of the regularizer of minimizers are uniquely determined for~$\expd>1$ or~$\expr>1$. The statement follows from standard arguments, is implicitly used in several proofs in the literature, however, it is usually not stated clearly, despite being a result of interest.}
\begin{thm}[Uniqueness of residuals]\label{thm:uniqueness_res}
Let~$\Phi,\Psi:[0,\infty)\rightarrow[0,\infty)$ be increasing and convex,~$J:\X\to\R\cup\{\infty\}$ be convex and proper, and~$u,v\in\X$ be two minimizers of~$E_t(\cdot)\defi \mathcal{D}(\cdot)+t\mathcal{R}(\cdot)$ where~$\mathcal{D}(\cdot)\defi \Phi(\norm{\fwd \cdot-f})$,~$\mathcal{R}(\cdot)\defi \Psi(J(\cdot))$, and~$t>0$. If~$\Phi$ or~$\Psi$ is strictly convex, then~$\norm{\fwd u-f}_\H=\norm{\fwd v-f}_\H$ and~$J(u)=J(v)$.
\end{thm}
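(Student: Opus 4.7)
My plan is to exploit convexity of $E_t$ along the segment joining the two minimizers. Setting $m \defi E_t(u) = E_t(v)$ and $w_s \defi su + (1-s)v$ for $s \in [0,1]$, convexity of $E_t$ yields $E_t(w_s) \leq sE_t(u)+(1-s)E_t(v) = m$, and since $m$ is the minimum value we must have $E_t(w_s) = m$ for all $s \in [0,1]$.

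Next I would unfold this equality into the two-step chain
\begin{align*}
E_t(w_s)
&\leq \Phi\bigl(s\norm{\fwd u-f}_\H+(1-s)\norm{\fwd v-f}_\H\bigr)+t\Psi\bigl(sJ(u)+(1-s)J(v)\bigr)\\
&\leq sE_t(u)+(1-s)E_t(v),
\end{align*}
where the first inequality combines the triangle inequality, convexity of $J$, and monotonicity of $\Phi,\Psi$, while the second uses convexity of $\Phi$ and $\Psi$ themselves. Since the extremes both equal $m$ and every summand is controlled by a non-negative defect, each intermediate step must be an equality; in particular, for every $s\in(0,1)$,
\begin{align*}
\Phi\bigl(s\norm{\fwd u-f}_\H+(1-s)\norm{\fwd v-f}_\H\bigr)
&= s\Phi(\norm{\fwd u-f}_\H)+(1-s)\Phi(\norm{\fwd v-f}_\H),\\
\Psi\bigl(sJ(u)+(1-s)J(v)\bigr)
&= s\Psi(J(u))+(1-s)\Psi(J(v)).
\end{align*}

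Finally I would carry out a case split. If $\Phi$ is strictly convex, the first affine identity immediately forces $\norm{\fwd u-f}_\H = \norm{\fwd v-f}_\H$; inserting this into $E_t(u)=E_t(v)$ then yields $\Psi(J(u))=\Psi(J(v))$, and the monotonicity of $\Psi$ pins down $J(u)=J(v)$. The case of strictly convex $\Psi$ is symmetric. The one delicate point, which I expect to be the main obstacle, is precisely this cancellation step: concluding $J(u)=J(v)$ from $\Psi(J(u))=\Psi(J(v))$ requires $\Psi$ to be strictly increasing, and analogously for $\Phi$. This is actually available in the hypothesis, since a strictly convex non-decreasing function on $[0,\infty)$ cannot be constant on any subinterval and is therefore strictly increasing; hence injectivity of the "other" factor holds automatically once the standing monotonicity assumption is interpreted accordingly.
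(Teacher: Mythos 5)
Your argument is correct, and it is precisely the ``standard argument'' that the paper invokes without writing out (no proof of this theorem appears in the text): restrict $E_t$ to the segment between the two minimizers, note that the whole chain of inequalities collapses to equalities, and split the resulting termwise identities using strict convexity of one factor. The only step worth a second look is the final cancellation, and there your diagnosis is slightly misdirected even though the conclusion stands: the observation that a strictly convex non-decreasing function on $[0,\infty)$ is automatically strictly increasing applies to the factor you do \emph{not} need to invert. When $\Phi$ is strictly convex you get $\norm{\fwd u-f}_\H=\norm{\fwd v-f}_\H$ directly from the affine identity, and what you then need is injectivity of $\Psi$ --- the factor that is only assumed increasing and convex --- so the proof genuinely hinges on reading ``increasing'' as ``strictly increasing'' for the non-strictly-convex factor, exactly as your closing clause concedes. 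That reading is not optional: with $\fwd(u_1,u_2)=u_1$, $f=0$, $J(u)=\vert u_2\vert$, $\Phi(r)=r^2/2$ and the merely non-decreasing convex $\Psi(r)=\max(0,r-1)$, both $(0,0)$ and $(0,1)$ are minimizers with different values of $J$, so the statement would fail. In the paper's application $\Phi$ and $\Psi$ are the powers $r\mapsto r^\expd/\expd$ and $r\mapsto r^\expr/\expr$ with $\expd,\expr\geq1$, which are strictly increasing, so the intended instances are covered and your proof is complete.
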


\begin{rem}
With a little abuse of notation we introduce the following maps
\begin{align}\label{eq:residuum}
&R:(0,\infty)\rightarrow[0,\infty),\;t\mapsto R(t)\defi\norm{\fwd u_t-f}_\H,\\\label{eq:regulariser}
&J:(0,\infty)\to[0,\infty),\;t\mapsto J(t)\defi J(u_t),
\end{align}
where~$u_t$ is a minimizer of~$E_t^{\expd,\expr}(\cdot;f)$. Note that we suppress the dependency of~$R$ on~$\expd$ and~$\expr$ for concise notation. By Theorem~\ref{thm:uniqueness_res} the maps $R$ and $J$ are well-defined for $\expd>1$ or $\expr>1$. If $\expd,\expr=1$, we will use the same expressions for minimizers of~$E_t^{1,1}(\cdot;f)$ although their values will depend on the individual minimizer, in general.
\end{rem}

A fairly well-known property is that the residual map~$t\mapsto R(t)$ is monotonously increasing whereas the regularizer map~$t\mapsto J(t)$ decreases monotonously. \LB{The proof works precisely as in \cite{burger2013guide} which deals with the case $J=\tv$.}
\begin{lemma}\label{lem:monot_res}
Let~$0<s<t$ and~$u_s,u_t$ denote minimizers of~$E_s^{\expd,\expr}(\cdot;f)$ and~$E^{\expd,\expr}_t(\cdot;f)$, respectively. Then it holds $R(s)\leq R(t)$ and $J(s)\geq J(t)$, where the inequalities are strict if minimizers are unique. 
\end{lemma}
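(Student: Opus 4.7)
The plan is to exploit the two minimality relations against each other: test the minimality of $u_s$ at parameter $s$ with the competitor $u_t$, then swap the roles to test $u_t$ at parameter $t$ with the competitor $u_s$. Adding the resulting two scalar inequalities makes the data fidelity contributions $\tfrac{1}{\expd}\norm{\fwd u_s - f}_\H^\expd$ and $\tfrac{1}{\expd}\norm{\fwd u_t - f}_\H^\expd$ cancel, and the remaining regularization terms collapse to $(s-t)\bigl(J(u_s)^\expr - J(u_t)^\expr\bigr)/\expr \le 0$. Since $s < t$ and $x\mapsto x^\expr$ is increasing on $[0,\infty)$ for $\expr\ge 1$, this gives $J(u_s)\ge J(u_t)$, i.e.\ $J(s)\ge J(t)$. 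Feeding this bound back into either of the original inequalities cancels the regularization contributions up to a non-positive remainder and yields $\norm{\fwd u_s-f}_\H^\expd\le\norm{\fwd u_t-f}_\H^\expd$, hence $R(s)\le R(t)$.

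The whole monotonicity argument is thus a short variational comparison; existence of $u_s, u_t$ is handed over to Theorem~\ref{thm:existence}, so no additional lower semicontinuity or compactness input enters at this stage. This mirrors the computation carried out in~\cite{burger2013guide} for the special case $J=\tv$, and the only bookkeeping is to keep track of where strict convexity of $x\mapsto x^\expr$ is actually used (it is not needed for the weak inequalities).

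For the strict inequalities under the uniqueness assumption, I would argue by contradiction. If $J(s)=J(t)$, the aggregated inequality becomes an equality, which forces each of the two single-parameter inequalities to be an equality as well; in particular $u_t$ minimizes $E_s^{\expd,\expr}(\cdot;f)$. Uniqueness then gives $u_s=u_t$, and one must show this coincidence is incompatible with $s\neq t$. I expect this last step to be the main obstacle: it requires comparing the optimality relations \eref{eq:opt_cond_u} at the two parameters $s$ and $t$, where a common minimizer $u$ would produce two subgradients in $\partial J(u)$ that are scalar multiples of each other with ratio $t/s\neq 1$, and then invoking the standing hypotheses (in particular $\dataX\notin\calN(J)$ together with Assumption~\ref{assmpt:A-norm} to exclude $\fwd u=f$ in the relevant regime) to rule out both degenerate cases. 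The symmetric argument starting from $R(s)=R(t)$ produces the strict inequality for the residual map.
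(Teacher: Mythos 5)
Your proof of the two weak inequalities is correct and is precisely the comparison argument the paper intends (the paper only points to \cite{burger2013guide} instead of writing it out): test each minimality against the other minimizer, add the two inequalities so that the fidelity terms cancel and $(s-t)\bigl(J(u_s)^\expr-J(u_t)^\expr\bigr)\le 0$ remains, conclude $J(u_s)\ge J(u_t)$ from monotonicity of $x\mapsto x^\expr$, and feed this back into the first inequality to get $\norm{\fwd u_s-f}_\H\le\norm{\fwd u_t-f}_\H$. No compactness or semicontinuity is needed, as you say.

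The strictness part is where the write-up stops, and the step you flag as ``the main obstacle'' is worth examining closely, because it splits into an easy half and a half that cannot be closed. The easy half: if $u\defi u_s=u_t$ with $\fwd u\neq f$ and $J(u)>0$, the optimality condition \eref{eq:opt_cond_u} at the two parameters gives $p_s,p_t\in\partial J(u)$ with $p_s=(t/s)\,p_t$; since every subgradient of the absolutely one-homogeneous $J$ satisfies $\langle p,u\rangle=J(u)$ (see \eref{eq:subdiff}), this forces $(t/s-1)J(u)=0$ and hence $J(u)=0$, a contradiction. So two distinct parameters can share a minimizer only in the degenerate regimes $\fwd u=f$ (exact penalization, $\expd=1$, $s,t\le t_*$) or $J(u)=0$. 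The half that fails: the second regime is genuinely nonempty. For $\expr=1$ and $t>t_{**}$ the \emph{unique} minimizer is the constant $\calP^\fwd(f)$ by Theorem~\ref{thm:extinction}, so $R$ and $J$ are constant on $(t_{**},\infty)$ and the strict inequalities do not hold there even though minimizers are unique. Consequently no argument can complete your contradiction as stated; the defensible strengthening of the lemma is that, under uniqueness, the inequalities are strict \emph{if and only if} $u_s\neq u_t$ (equivalently, away from $[0,t_*]$ and $[t_{**},\infty)$), and your scheme, completed with the subgradient computation above, proves exactly that. You should state this restriction explicitly rather than attempt to rule out $u_s=u_t$ in general.
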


\subsection{Behaviour for small time}
\label{sec:small_time}
Obviously, for $t=0$ any $\dataX$ fulfilling~\eref{eq:range_cond} is a minimizer of~$E_0^{\expd,\expr}(\cdot;f)$. In this section we consider the special case~$\expd=1$ where such~$\dataX$ can be a solution for small~$t>0$, as well. \LB{This phenomenon is called \emph{exact penalization} and has been introduced in \cite{burger2004convergence}. Due to the regularizing effect of the minimzation of \eref{eq:functional}, certainly this exotic behavior can only occur if the datum $f$ is noise-free. Although this situation might be of limited relevance in practical situations, it is important to understand and characterize exact penalization from a theoretical perspective, e.g. in order to obtain convergence rates (cf.~\cite{hofmann2007convergence,anzengruber2014regularization}).} We shall assume that~\eref{eq:range_cond} holds and assume that there is some $u^\dagger$ which also fulfills the following \emph{source condition}: 
\begin{equation}\label{eq:source_cond}\tag{SC}
\exists q\in\H\st\fwd^*q\in\partial J(u^\dagger).
\end{equation}
Needless to say, since~$J(\dataX)\neq 0$, any such~$q$ fulfilling~\eref{eq:source_cond} is also different from zero. Furthermore, according to \cite{burger2004convergence} such $\dataX$ fulfills range and source condition if and only if it is a $J$-\emph{minimizing solution} of the forward problem \eref{eq:IP}, i.e., $J(\dataX)\leq J(u)$ for all $u\in\X$ with $\fwd u=f$. In particular, the (positive) value $J(\dataX)$ does not depend on the choice of $\dataX$ and will be denoted by $J_{\min}$, in the sequel. It is obvious from the optimality condition~\eref{eq:opt_conf_f} that~\eref{eq:source_cond} is necessary for~$\dataX$ being a minimizer for~$t>0$. Indeed, the source condition is also sufficient. To show this, we start with the following lemmas.
\begin{lemma}\label{lem:well_def_s*}
Let conditions~\eref{eq:range_cond} and~\eref{eq:source_cond} hold true. Then~$s_*$ given by
\begin{equation}\label{eq:s_*}
s_* \defi \inf_{\substack{\dataX\in\X:\\\eref{eq:range_cond},\,\eref{eq:source_cond}\text{ hold}}}\inf\left\lbrace\norm{q}_\H\st q\in\H,\,\fwd^*q\in\partial J(\dataX)\right\rbrace
\end{equation}
fulfills~$0<s_*<\infty$.
\end{lemma}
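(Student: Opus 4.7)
The plan is to bound $s_*$ from above and below by two short and essentially independent arguments, neither of which invokes Assumptions~\ref{assmpt:A-norm}--\ref{assmpt:w*-w-c}.

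First I would establish $s_* < \infty$. Since the lemma's hypotheses include both \eref{eq:range_cond} and \eref{eq:source_cond}, there exists at least one pair $(\dataX,q)$ with $\fwd\dataX=f$ and $\fwd^* q \in \partial J(\dataX)$. The set over which the double infimum in \eref{eq:s_*} is taken is therefore non-empty, giving the upper bound $s_* \leq \|q\|_\H < \infty$.

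Next I would prove $s_* > 0$. The crux is the Euler-type identity
$$\langle p, u\rangle = J(u), \qquad \forall\, u\in\X,\; \forall\, p\in\partial J(u),$$
which follows from the absolute one-homogeneity of $J$: testing the subgradient inequality $\langle p, v-u\rangle \leq J(v)-J(u)$ with $v=2u$ yields $\langle p,u\rangle \leq J(u)$, while $v=0$ yields the reverse. Applied to any admissible pair $(\dataX,q)$ with $p = \fwd^* q \in \partial J(\dataX)$, this gives
$$\langle q, f\rangle = \langle q, \fwd\dataX\rangle = \langle \fwd^* q, \dataX\rangle = J(\dataX) = J_{\min},$$
where the last equality uses, as recalled just before the lemma, that any $\dataX$ satisfying both \eref{eq:range_cond} and \eref{eq:source_cond} is a $J$-minimising solution of \eref{eq:IP} and hence takes the common value $J_{\min}$. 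Cauchy--Schwarz therefore produces $\|q\|_\H\,\|f\|_\H \geq J_{\min}$, and the standing assumption $\dataX\notin\calN(J)$ forces $J_{\min} > 0$ (and incidentally $f\neq 0$). Passing to the double infimum yields the uniform bound $s_* \geq J_{\min}/\|f\|_\H > 0$.

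There is no substantial obstacle: the entire argument rests on one-homogeneity together with a single application of Cauchy--Schwarz. The only point I would take care to state clearly is the independence of $J(\dataX)=J_{\min}$ from the particular $J$-minimising solution chosen, so that the lower bound $J_{\min}/\|f\|_\H$ is genuinely uniform over all admissible pairs appearing in \eref{eq:s_*}.
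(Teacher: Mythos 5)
Your proposal is correct and follows essentially the same route as the paper: both proofs rest on the identity $J_{\min}=J(\dataX)=\langle \fwd^*q,\dataX\rangle=\langle q,f\rangle\leq\norm{f}_\H\norm{q}_\H$ together with $J_{\min}>0$, the only cosmetic difference being that you state the resulting lower bound $s_*\geq J_{\min}/\norm{f}_\H$ directly while the paper phrases it as a contradiction along a minimizing sequence with $\norm{q_k}_\H\to 0$. The finiteness argument (non-emptiness of the admissible set) is identical.
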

\begin{proof}
Let us now assume that there is a sequence $(\dataX_k)\subset\X$ fulfilling conditions~\eref{eq:range_cond} and~\eref{eq:source_cond} and a corresponding sequence of source elements $(q_k)\subset\H$ with $\fwd^* q_k\in\partial J(\dataX_k)$ for all $k$ such that $\lim_{k\to\infty}\norm{q_k}_\H=0$. In this case we calculate
$$0<J_{\min}=J(\dataX_k)=\langle\fwd^* q_k,\dataX_k\rangle=\langle q_k,f\rangle\leq\norm{f}_\H\norm{q_k}_\H\to 0,\quad k\to\infty$$
which is a contradiction.

Finally, assumptions~\eref{eq:range_cond} and~\eref{eq:source_cond} imply that the admissible sets in~\eref{eq:s_*} are non-empty and hence~$s_*<\infty$. 
\end{proof}

\begin{lemma}\label{lem:s_*_attained}
Under the conditions of Lemma~\ref{lem:well_def_s*} the infimum is attained, i.e., there is~$\hat{u}\in\dom(J)$ fulfilling $\fwd\hat{u}=f$ and~$\hat{q}\in\H$ with~$\fwd^*\hat{q}\in\partial J(\hat{u})$ such that~$\norm{\hat{q}}_\H=s_*$.
\end{lemma}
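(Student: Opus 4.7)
The plan is to prove attainment by weak compactness on a minimizing sequence, using Assumptions~\ref{assmpt:A-norm}--\ref{assmpt:w*-w-c} in exactly the way one does for existence of $J$-minimizing solutions, and then passing the subgradient inclusion to the limit by exploiting absolute $1$-homogeneity of $J$. So first I would take, for each $k\in\N$, an admissible $\dataX_k\in\dom(J)$ with $\fwd\dataX_k=f$ (hence $J(\dataX_k)=J_{\min}$ since any such $\dataX_k$ fulfilling \eref{eq:range_cond} and \eref{eq:source_cond} is a $J$-minimizing solution) together with $q_k\in\H$ satisfying $\fwd^*q_k\in\partial J(\dataX_k)$ and $\norm{q_k}_\H\to s_*$ as $k\to\infty$.

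Next I would extract a limit candidate $(\hat u,\hat q)$ by compactness. The sequence $(q_k)$ is bounded in $\H$, so a subsequence converges weakly to some $\hat q\in\H$ with $\norm{\hat q}_\H\leq\liminf_k\norm{q_k}_\H=s_*$. For $(\dataX_k)$, apply Assumption~\ref{assmpt:poincare} to the decomposition $\dataX_k=(\dataX_k-\calP^\fwd(f))+\calP^\fwd(f)$: since $J(\dataX_k)=J_{\min}$ for every $k$ and $\calP^\fwd(f)$ is fixed, $(\dataX_k)$ is bounded in $\X$. By the Banach--Alaoglu theorem (using separability of the predual $\Y$), a further subsequence satisfies $\dataX_k\rightharpoonup^*\hat u$ in $\X$. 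Weak$^*$ lower semi-continuity of $J$ gives $J(\hat u)\leq\liminf_k J(\dataX_k)=J_{\min}$, while Assumption~\ref{assmpt:w*-w-c} yields $\fwd\dataX_k\rightharpoonup\fwd\hat u$ in $\H$; since $\fwd\dataX_k=f$ for all $k$, we conclude $\fwd\hat u=f$ and then $J(\hat u)=J_{\min}$ by the definition of $J_{\min}$.

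The crux is passing $\fwd^*q_k\in\partial J(\dataX_k)$ to the limit, which is where absolute $1$-homogeneity of $J$ is essential. Recall that for such $J$, the inclusion $p\in\partial J(u)$ is equivalent to the two conditions $\langle p,v\rangle\leq J(v)$ for every $v\in\X$ and $\langle p,u\rangle=J(u)$. For every $v\in\X$ we have $\langle q_k,\fwd v\rangle=\langle \fwd^*q_k,v\rangle\leq J(v)$; letting $k\to\infty$ and using weak convergence in $\H$ yields $\langle \fwd^*\hat q,v\rangle\leq J(v)$. For the equality, observe that $\langle q_k,f\rangle=\langle q_k,\fwd\dataX_k\rangle=\langle \fwd^*q_k,\dataX_k\rangle=J(\dataX_k)=J_{\min}$, so in the limit $\langle \fwd^*\hat q,\hat u\rangle=\langle \hat q,f\rangle=J_{\min}=J(\hat u)$. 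This gives $\fwd^*\hat q\in\partial J(\hat u)$, and combined with $\fwd\hat u=f$ shows that $(\hat u,\hat q)$ is admissible in the double infimum defining $s_*$. Hence $\norm{\hat q}_\H\geq s_*$, which together with $\norm{\hat q}_\H\leq s_*$ proves $\norm{\hat q}_\H=s_*$ and concludes the proof.

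The main obstacle will be establishing weak$^*$ compactness of the sequence $(\dataX_k)$, since $J$-minimality only controls $J(\dataX_k)$ and not $\norm{\dataX_k}_\X$; this is exactly what Assumption~\ref{assmpt:poincare} is designed for, once combined with the fact that all $\dataX_k$ share the same $\fwd$-orthogonal projection $\calP^\fwd(f)$. Everything else is a textbook weak/weak$^*$ limit argument made transparent by the $1$-homogeneity characterization of subgradients.
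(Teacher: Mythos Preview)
Your proof is correct and follows essentially the same approach as the paper: extract weak(\,$^*$) limits of a minimizing sequence via Assumption~\ref{assmpt:poincare} and Banach--Alaoglu, verify $\fwd\hat u=f$ and $J(\hat u)=J_{\min}$ by weak$^*$-to-weak continuity and lower semi-continuity, and then pass the subgradient inclusion to the limit using the $1$-homogeneous characterization $p\in\partial J(u)\iff p\in\partial J(0)$ and $\langle p,u\rangle=J(u)$. The only cosmetic difference is that you verify $\fwd^*\hat q\in\partial J(0)$ directly by passing $\langle q_k,\fwd v\rangle\leq J(v)$ to the limit in $\H$, whereas the paper argues that $\fwd^*q_k\rightharpoonup^*\fwd^*\hat q$ in $\dualX$ and invokes weak$^*$ closedness of $\partial J(0)$; these are equivalent.
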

\begin{proof}
Let~$(\dataX_k)\subset\X$ fulfilling~\eref{eq:range_cond} and $(q_k)\subset\H$ such that~$\fwd^*q_k\in\partial J(\dataX_k)$, for every~$k\in\N$, be a minimizing sequence \minrev{for \eref{eq:s_*}, meaning that $\lim_{k\to\infty}\norm{q_k}_\H=s_*$}. By Assumption~\ref{assmpt:poincare} we infer
\begin{align*}
\norm{\dataX_k-\calP^\fwd(\fwd \dataX_k)}_\X\leq C J(\dataX_k)=CJ_{\min}<\infty,\quad\forall k\in\N.
\end{align*}
Hence, $\left(\dataX_k-\calP^\fwd(\fwd \dataX_k)\right)$ is bounded in $\X$ and admits a subsequence (denoted with the same index) which weakly$^*$ converges to some $h\in\X$. As $\calP(\fwd\dataX_k)=\calP^\fwd(f)$ holds for all $k\in\N$, we obtain that $(\dataX_k)$ converges to $\hat{u}\defi h+\calP^\fwd(f)$. Using again that $\fwd \dataX_k=f$, this implies that $f=\fwd\hat{u}$. Furthermore, by the lower semi-continuity of $J$, we infer that $\hat{u}\in\dom(J)$. Hence, we have shown that the limit of $(\dataX_k)$ fulfills~\eref{eq:range_cond}.

Similarly, being a minimizing sequence, $(q_k)$ is bounded in~$\H$ and a subsequence weakly converges to some $\hat{q}\in\H$. It holds (after another round of subsequence refinement)
\begin{align*}
\langle\fwd^*\hat{q},\hat{u}\rangle=\langle\hat{q},f\rangle=\lim_{k\to\infty}\langle q_k,f\rangle=\lim_{k\to\infty}\langle\fwd^*q_k,\dataX_k\rangle=\lim_{k\to\infty}J(\dataX_k)\geq J(\hat{u}),
\end{align*}
using the lower semi-continuity of $J$. On the other hand, one clearly has $J(\dataX_k)=J_{\min}\leq J(\hat{u})$, for all $k\in\N$ since $\hat{u}$ satisfies~\eref{eq:range_cond}. This shows $\langle\fwd^*\hat{q},\hat{u}\rangle=J(\hat{u}).$ Furthermore, from
$$\langle\fwd^* q_k-\fwd^*\hat{q},u\rangle=\langle q_k-\hat{q},\fwd u\rangle,\quad\forall u\in\X,$$
and the weak convergence of $(q_k)$ to $\hat{q}$ we infer that $(\fwd^*q_k)$ weakly$^*$ converges to $\fwd^*\hat{q}$ in $\dualX$. Since the sequence $(\fwd^*q_k)$ lies in $\partial J(0)$ which is weakly$^*$ closed (cf.~\cite{ekeland1999convex}), also $\fwd^*\hat{q}\in \partial J(0)$ holds. \minrev{Using \eref{eq:subdiff}}, we have shown that $\fwd^*\hat{q}\in\partial J(\hat{u})$, as desired. \minrev{Remains to show $\norm{\hat{q}}_\H=s_*$. The definition of $s_*$ and the lower semi-continuity of the Hilbert norm implies $s_*\leq\norm{\hat{q}}_\H\leq\lim_{k\to\infty}\norm{\hat{q_k}}_\H=s_*$ by the assumption that $(q_k)$ is a minimizing sequence. This concludes the proof.}
\end{proof}

As a consequence of Lemmas~\ref{lem:well_def_s*} and~\ref{lem:s_*_attained} we obtain

\begin{thm} \label{thm:fsolution}
Under the conditions of Lemma~\ref{lem:well_def_s*} there is a minimizer $u_t$ of~$E^{1,\expr}_t(\cdot;f)$ fulfilling $\fwd u_t=f$ if and only if~$t\leq t_*$, where $t_*\defi{ J_{\min}^{1-\expr}}/{s_*}$ and $s_*$ is given by~\eref{eq:s_*}. 
\end{thm}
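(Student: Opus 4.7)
The plan is to exploit the optimality condition of Theorem~\ref{thm:opt_cond}, combined with the attainment result of Lemma~\ref{lem:s_*_attained}, to produce an ``if and only if'' chain between a solution with $\fwd u_t = f$ and a source element of norm at most $1/(t J_{\min}^{\expr-1})$. Throughout, the key observation is that any candidate minimizer $u_t$ with $\fwd u_t = f$ must simultaneously satisfy \eref{eq:range_cond} and \eref{eq:source_cond}, hence (by the characterisation of $J$-minimising solutions quoted after \eref{eq:source_cond}) automatically has $J(u_t) = J_{\min}$. This is what allows $J(u_t)$ to be replaced by the data-independent constant $J_{\min}$ everywhere.

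For the forward direction, assume $u_t$ is a minimizer with $\fwd u_t = f$. Theorem~\ref{thm:opt_cond} then forces $\expd = 1$ (already consistent with our model $E^{1,\expr}_t$) and produces $q \in B_1^\H$ with
\[
-\frac{\fwd^* q}{t J(u_t)^{\expr-1}} \in \partial J(u_t).
\]
Set $\tilde q := -q/(t J(u_t)^{\expr-1})$; then $\fwd^* \tilde q \in \partial J(u_t)$, so $u_t$ satisfies the source condition, hence $J(u_t) = J_{\min}$. Since $\|\tilde q\|_\H \leq 1/(t J_{\min}^{\expr-1})$ and $(u_t, \tilde q)$ is an admissible pair in the infimum defining $s_*$, we conclude
\[
s_* \;\leq\; \|\tilde q\|_\H \;\leq\; \frac{1}{t J_{\min}^{\expr-1}},
\]
which rearranges to $t \leq J_{\min}^{1-\expr}/s_* = t_*$.

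For the backward direction, assume $t \leq t_*$. By Lemma~\ref{lem:s_*_attained} there exist $\hat u \in \dom(J)$ with $\fwd \hat u = f$ and $\hat q \in \H$ with $\fwd^* \hat q \in \partial J(\hat u)$ and $\|\hat q\|_\H = s_*$. Again $\hat u$ is a $J$-minimising solution, so $J(\hat u) = J_{\min}$. To invoke the sufficiency part of Remark~\ref{rem:opt_cond}, define the candidate multiplier
\[
q := -t\, J_{\min}^{\expr-1}\, \hat q.
\]
Then $\|q\|_\H = t\, J_{\min}^{\expr-1}\, s_* \leq t_*\, J_{\min}^{\expr-1}\, s_* = 1$, so $q \in B_1^\H$; and by construction $-\fwd^* q / (t J(\hat u)^{\expr-1}) = \fwd^* \hat q \in \partial J(\hat u)$. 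Thus the optimality condition \eref{eq:opt_conf_f} is satisfied, and $\hat u$ is the desired minimizer.

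The only bookkeeping hazard is the $\expr = 1$ case, where the factor $J^{\expr-1}$ reduces to $1$ via the convention $0^0 = 1$ mentioned in Theorem~\ref{thm:opt_cond}; both directions above then collapse cleanly to the simpler assertion $t \leq 1/s_*$. I do not foresee a genuine technical obstacle here: the substantive work has already been done in Lemmas~\ref{lem:well_def_s*} and~\ref{lem:s_*_attained}, and the present theorem is essentially a rescaling argument tying the optimality condition to the definition of $s_*$.
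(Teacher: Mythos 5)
Your proof is correct and follows essentially the same route as the paper: the backward direction rescales the minimal-norm source element from Lemma~\ref{lem:s_*_attained} to verify the optimality condition~\eref{eq:opt_conf_f}, and the forward direction reads the optimality condition as producing an admissible source element in the infimum defining $s_*$. Your explicit use of the $J$-minimizing-solution characterization to justify $J(u_t)=J_{\min}$ makes precise a step the paper leaves implicit, but the argument is the same.
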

\begin{proof}
Let~$t\leq t_*$ and choose $\hat{u}\in\H$ and $\hat{q}\in\H$ as in the proof of Lemma~\ref{lem:s_*_attained}. Defining ${p}\defi\fwd^*\hat{q}$ and $q\defi-tJ_{\min}^{\expr-1}\hat{q}$ we find that~$\fwd^*q+tJ_{\min}^{\expr-1}p=0$ and~$\norm{q}_\H=tJ_{\min}^{\expr-1}\norm{\hat{q}}_\H\leq t_*J_{\min}^{\expr-1}s_*=1$. Consequently, by the optimality conditions (cf.~Theorem~\ref{thm:opt_cond} and Remark~\ref{rem:opt_cond}) it follows that~$\hat{u}$ is a minimizer of~$E^{1,\expr}_t(\cdot;f)$. 

On the other hand, let~$\dataX$, fulfilling \eref{eq:range_cond} and \eref{eq:source_cond}, be a minimizer of~$E_t^{1,\expr}(\cdot;f)$. By \eref{eq:opt_conf_f} from Theorem~\ref{thm:opt_cond} there are~$q\in B_1^\H$ and~$0\neq p\in\partial J(\dataX)$ such that~$\fwd^*q+tJ_{\min}^{\expr-1}p=0$ or equivalently $p=-\fwd^*q/(tJ_{\min}^{\expr-1})$. Hence, it holds by definition of $s_*$ that $\norm{q/(tJ_{\min}^{\expr-1})}\geq s_*$ which implies $t\leq 1/(s_*J_{\min}^{\expr-1})=t_*$.
\end{proof}
Note that in the second part of the proof the source condition follows directly from the optimality condition and does not have to be imposed. Next we show that for $t < t_*$ the forward solution path (and hence the residual) is uniquely determined:

\begin{thm}
Let \eref{eq:range_cond} and \eref{eq:source_cond} hold. Every minimizer~$u_t$ of~$E_t^{1,\expr}(\cdot;f)$ for~$0 < t < t_*$ fulfills~$\fwd u_t=f$. 
\end{thm}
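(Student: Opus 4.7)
The plan is to argue by contradiction: suppose $u_t$ is a minimizer with $\fwd u_t\neq f$ and derive $t\geq t_*$. Because $0<t<t_*$, Theorem~\ref{thm:fsolution} (together with Lemma~\ref{lem:s_*_attained}) furnishes a minimizer $\hat u\in\dom(J)$ with $\fwd\hat u=f$ and $J(\hat u)=J_{\min}$, and a source element $\hat q\in\H$ satisfying $\fwd^*\hat q\in\partial J(\hat u)$ and $\norm{\hat q}_\H = s_*$. Comparing the two minimum energies yields the identity
\[
\norm{\fwd u_t - f}_\H + \frac{t}{\expr}J(u_t)^\expr = \frac{t}{\expr}J_{\min}^\expr,
\]
which in particular forces $J(u_t)\leq J_{\min}$.

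The second step is to apply the subgradient inequality at $\hat u$ with subgradient $\fwd^*\hat q$, giving
\[
J(u_t) \geq J_{\min} + \langle \hat q,\fwd u_t - f\rangle,
\]
from which Cauchy--Schwarz together with $\norm{\hat q}_\H = s_*$ produces the lower bound $J_{\min} - J(u_t) \leq s_*\norm{\fwd u_t - f}_\H$.

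To link these two estimates I would convert the $\expr$-th power difference appearing on the right of the energy identity into a first-power difference via the tangent-line inequality $x_1^\expr - x_2^\expr \leq \expr\, x_1^{\expr-1}(x_1 - x_2)$ for $0\leq x_2\leq x_1$, which is a direct consequence of the convexity of $x\mapsto x^\expr$. Applied to $x_1=J_{\min}$ and $x_2=J(u_t)$, the energy identity yields $\norm{\fwd u_t - f}_\H\leq tJ_{\min}^{\expr-1}(J_{\min}-J(u_t))$, and chaining with the subgradient bound gives
\[
\norm{\fwd u_t - f}_\H \leq t\, s_*J_{\min}^{\expr-1}\norm{\fwd u_t - f}_\H.
\]
Since $\norm{\fwd u_t - f}_\H>0$ by assumption, cancelling this factor produces $t\geq 1/(s_*J_{\min}^{\expr-1})=t_*$, contradicting $t<t_*$.

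The main delicate point is the case $\expr>1$: for $\expr=1$ the energy identity is already linear in $J_{\min}-J(u_t)$ and the contradiction is immediate with the bare constant $s_*$, whereas for general $\expr$ the tangent-line inequality is precisely what produces the prefactor $J_{\min}^{\expr-1}$ matching the threshold $t_*$ of Theorem~\ref{thm:fsolution}, so the bound is sharp and no slack is wasted. The degenerate subcase $J(u_t)=0$ is absorbed into the same tangent-line bound without further argument.
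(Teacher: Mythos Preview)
Your proof is correct but follows a genuinely different route from the paper's. The paper argues in three lines by a scaling trick: from the minimality of $u_t$ at~$t$ one has
\[
\Vert \fwd u_t - f\Vert_\H + \frac{t}{\expr} J(u_t)^\expr \leq \frac{t}{\expr} J(\dataX)^\expr,
\]
and multiplying through by $t_*/t>1$ makes the inequality strict (since $\Vert \fwd u_t - f\Vert_\H>0$), which contradicts the fact that~$\dataX$ is a minimizer of~$E_{t_*}^{1,\expr}(\cdot;f)$ established in Theorem~\ref{thm:fsolution}. By contrast, you exploit Theorem~\ref{thm:fsolution} only to obtain the equal-energy identity at~$t$, and then re-derive the threshold~$t_*$ from first principles via the subgradient inequality at~$\hat u$ and the tangent-line bound for $x\mapsto x^\expr$. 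The paper's argument is more economical and never touches the source element~$\hat q$ or the convexity of the power function explicitly; your argument is longer but more quantitative, making transparent how the constant $s_*J_{\min}^{\expr-1}$ arises from the source condition and why the threshold is sharp.
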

\begin{proof}
Suppose~$u_t$ is a minimizer for~$0 < t < t_*$ and~$\fwd u_t\neq f$. Then
$$ \Vert \fwd u_t - f\Vert_\H + \frac{t}{\expr} J(u_t)^\expr \leq \frac{t}{\expr} J(\dataX)^\expr,$$
where $\dataX$ fulfills range and source condition. Hence, multiplication with~$t_*/t>1$ yields
$$  \Vert\fwd u_t - f\Vert_\H + \frac{t_*}{\expr} J(u_t)^\expr < \frac{t_*}{\expr} J(\dataX)^\expr$$
which contradicts that~$\dataX$ is a minimizer of~$E_{t_*}^{1,\expr}(\cdot;f)$.
\end{proof}

In order to maintain a concise notation, for the rest of this manuscript we will define $t_*\defi 0$ if $\expd>1$ or if conditions~\eref{eq:range_cond} and~\eref{eq:source_cond} do not hold.

\subsection{Behaviour for large time}
\label{sec:large_time}
\LB{It is well-known that for increasing parameters $t$ in the ROF model, the solution approaches the mean value of the data. Similarly, if the regularization functional is given by a norm, the solution will approach zero. However, if a non-trivial forward operator mapping between two distinct spaces $\X$ and $\H$ and a general regularization functional are involved, the situation becomes unclear.} Hence, we investigate the behavior of minimizers $u_t$ of our general functional $E_t^{\expd,\expr}(\cdot;f)$ for~$t$ sufficiently large and we expect that~$u_t$ behaves the like a solution of
\begin{align}\label{eq:long_time_prob}
\inf_{u\in\calN(J)}\norm{\fwd u-f}_\H,
\end{align}
which is the $\fwd$-orthogonal projection of $f$ onto $\calN(J)$, introduced in~\eref{eq:orth_proj}. We refer to \sref{sec:orth_proj} of the appendix for further details. \LB{Note that the projection is not always as trivial as in the introductory examples of this section. In particular, if the null-space of the functional becomes bigger, as it is the case for higher order regularizations like total generalized variation \cite{bredies2010total}, it does not even admit a closed form. Furthermore, it is not obvious whether or not minimizers $u_t$ converge to the solution of \eref{eq:long_time_prob} for \emph{finite} parameters $t$. Note that the study of extinction times is also closely related nonlinear spectral theory (cf.~\cite{gilboa2014total} and \sref{sec:spectral}) since it relates to the eigenvalues contained in the data $f$. In a nutshell, the parts of the data which correspond to small eigenvalues extinct quickly, whereas the low eigenvalue components persist until a larger time $t$.} 

\begin{rem}
Note that for~$\X=\H$ and~$\fwd=\id$ it holds that~$\calP^\fwd=\calP$, i.e., the minimizer of~\eref{eq:long_time_prob} coincides with the orthogonal projection on~$\calN(J)$ which fulfills~$\langle f-\calP(f),\calP(f)\rangle=0$.
\end{rem}

But even in our more general setting one can obtain properties for~$\calP^\fwd$ which resemble the classical ones for orthogonal projections in Hilbert spaces. These are subsumed in Proposition~\ref{prop:properties_proj} and will be needed to obtain finite extinction time of minimizers of~$E_t^{\expd,\expr}(\cdot;f)$ with~$\expr=1$, meaning that there is~$T>0$ such that all minimizers for~$t>T$ coincide with~$\calP^\fwd(f)$. However, first we will prove a weaker statement, namely that minimizers of~$E_t^{\expd,\expr}(\cdot;f)$ converge to~$\calP^\fwd(f)$ as~$t$ tends to infinity. 

\begin{thm}
Let $(t_k)\subset(0,\infty)$ be a sequence tending to infinity and~$u_{t_k}$ be a minimizer of~$E^{\expd,\expr}_{t_k}(\cdot;f)$. Then~$(u_{t_k})$ weakly$^*$ converges to~$u_\infty\defi\calP^\fwd(f)$ in $\X$ as~$k\to\infty$. 
\end{thm}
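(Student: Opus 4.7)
The plan is to test against the projection $u_\infty=\calP^\fwd(f)$, which is a legitimate competitor since $u_\infty\in\calN(J)$ implies $J(u_\infty)=0$. Minimality of $u_{t_k}$ then gives
\begin{equation*}
\frac{1}{\expd}\norm{\fwd u_{t_k}-f}_\H^\expd+\frac{t_k}{\expr}J(u_{t_k})^\expr\leq \frac{1}{\expd}\norm{\fwd u_\infty-f}_\H^\expd.
\end{equation*}
From this I would immediately read off two facts: the residuals $\norm{\fwd u_{t_k}-f}_\H$ are bounded (in particular by $\norm{\fwd u_\infty-f}_\H$), and $J(u_{t_k})^\expr\leq \frac{\expr}{\expd t_k}\norm{\fwd u_\infty-f}_\H^\expd$, which tends to zero as $t_k\to\infty$, so $J(u_{t_k})\to 0$.

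Next I would upgrade this to weak$^*$ boundedness of $(u_{t_k})$ in $\X$. Writing $u_{t_k}=\calP^\fwd(\fwd u_{t_k})+r_k$, Assumption~\ref{assmpt:poincare} yields $\norm{r_k}_\X\leq CJ(u_{t_k})\to 0$. The null-space part $\calP^\fwd(\fwd u_{t_k})$ satisfies $\norm{\fwd \calP^\fwd(\fwd u_{t_k})}_\H\leq 2\norm{\fwd u_{t_k}}_\H$ by the triangle inequality applied to the projection (using $0\in\calN(J)$), hence it is bounded in the $\fwd$-norm on $\calN(J)$. Assumption~\ref{assmpt:A-norm} turns this into boundedness in $\norm{\cdot}_\X$, so $(u_{t_k})$ is bounded in $\X$. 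By Banach--Alaoglu a subsequence weakly$^*$ converges to some $\bar u\in\X$. Assumption~\ref{assmpt:w*-w-c} then gives $\fwd u_{t_k}\rightharpoonup \fwd\bar u$ in $\H$.

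To identify the limit, I would combine weak$^*$ lower semi-continuity of $J$ with $J(u_{t_k})\to 0$ to conclude $J(\bar u)=0$, i.e.\ $\bar u\in\calN(J)$. Weak lower semi-continuity of the Hilbert norm yields
\begin{equation*}
\norm{\fwd\bar u-f}_\H\leq\liminf_k \norm{\fwd u_{t_k}-f}_\H\leq \norm{\fwd u_\infty-f}_\H,
\end{equation*}
so $\bar u$ is itself a minimizer of \eref{eq:long_time_prob}. Since Assumption~\ref{assmpt:A-norm} ensures $\fwd$ is injective on $\calN(J)$, the squared Hilbert distance is strictly convex on the finite-dimensional affine slice $\calN(J)$, so $\calP^\fwd(f)$ is the unique minimizer and $\bar u=u_\infty$. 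A standard subsequence-of-subsequence argument then promotes this to convergence of the full sequence, proving the claim.

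The main obstacle is the step from the only directly available bound (boundedness of $\fwd u_{t_k}$ in $\H$) to weak$^*$ boundedness in $\X$; this is precisely what the Poincar\'e-type Assumption~\ref{assmpt:poincare} together with Assumption~\ref{assmpt:A-norm} is designed to deliver, after one recognizes that $J(u_{t_k})\to 0$ controls the orthogonal complement part of $u_{t_k}$.
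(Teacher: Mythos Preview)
Your proof is correct and follows essentially the same route as the paper's: compare with $u_\infty$, extract $J(u_{t_k})\to0$ and the residual bound, pass to a weak$^*$ cluster point, identify it as a minimizer of \eref{eq:long_time_prob}, and conclude by uniqueness plus a subsequence argument. The only structural difference is in the boundedness step: the paper simply notes that $E_1^{\expd,\expr}(u_{t_k};f)\leq E_{t_k}^{\expd,\expr}(u_{t_k};f)$ for large $k$ and invokes coercivity of $E_1^{\expd,\expr}(\cdot;f)$, whereas you unpack that coercivity explicitly via the decomposition $u_{t_k}=\calP^\fwd(\fwd u_{t_k})+r_k$ together with Assumptions~\ref{assmpt:A-norm} and~\ref{assmpt:poincare}. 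These are the same argument at different levels of detail.

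One small slip: you refer to $\calN(J)$ as a ``finite-dimensional affine slice''. Assumption~\ref{assmpt:A-norm} does \emph{not} require $\calN(J)$ to be finite-dimensional; what matters for uniqueness of $\calP^\fwd(f)$ is only that $\fwd$ is injective on $\calN(J)$ (implied by the norm equivalence), which makes $u\mapsto\norm{\fwd u-f}_\H^2$ strictly convex there. Drop the finite-dimensionality remark and the argument stands as written.
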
 
\begin{proof}
Since~$E^{\expd,\expr}_{t_k}(u_{t_k};f) \leq E^{\expd,\expr}_{t_k}(u_\infty;f)$, we obtain
\begin{align}
\frac{1}{\expr} J(u_{t_k})^\expr \leq \frac{\Vert \fwd u_\infty - f \Vert^\expd_\H}{\expd~t_k},\qquad\text{and}\qquad\label{ineq:estimates}
\norm{\fwd u_{t_k}-f}_\H\leq\norm{\fwd u_\infty-f}_\H,
\end{align} 
and, in particular,~$J(u_{t_k}) \rightarrow 0$ as~$k\to\infty$. Furthermore, for~$k$ large enough it holds $E_1^{\expd,\expr}(u_{t_k};f)\leq E_{t_k}^{\expd,\expr}(u_{t_k};f)$ and since the functional~$E_1^{\expd,\expr}(\cdot;f)$ is coercive,~$(u_{t_k})$ is bounded in~$\X$. This implies the existence of a weakly$^*$ convergent subsequence (denoted with the same indices) with limit~$u$. Again, by Assumption~\ref{assmpt:w*-w-c}, this implies that~$\fwd u_{t_k}$ weakly converges to~$\fwd u$ in~$\H$. Due to \eref{ineq:estimates},~$u$ is an element of~$\calN(J)$. Consequently, we can calculate, using weak lower semi-continuity of the norm in~$\H$ and~\eref{ineq:estimates}:
$$\norm{\fwd u-f}_\H\leq\liminf_{k\to\infty}\norm{\fwd u_{t_k}-f}_\H\leq\norm{\fwd u_\infty-f}_\H.$$
Since~$u_\infty$ is the unique minimizer of~\eref{eq:long_time_prob}, this implies that~$u=u_\infty$. The same argument holds true for all cluster points of~$(u_{t_k})$ which shows convergence of the whole sequence.
\end{proof}

In order to obtain a \emph{finite} extinction time, one has to demand the Poincar\'{e}-type inequality of Assumption~\ref{assmpt:poincare} and~$\expr=1$. We define~$E^\expd_t(\cdot;f)\defi E^{\expd,1}_t(\cdot;f)$.

\begin{thm} \label{thm:extinction}
Let~$\expr=1$. Under Assumption~\ref{assmpt:poincare} it holds that
$$ S(f)\defi \sup_{\substack{u \in \calN(J)^{\perp,\fwd}\\ J(u)=1}} \langle f, \fwd u \rangle<\infty$$
and for~$t\geq t_{**}$, given by
\begin{equation}\label{eq:t**}
t_{**} \defi \frac{S(f)}{\Vert f -\fwd \calP^\fwd(f)\Vert^{2-\expd}_\H}
\end{equation}  
if~$f\neq AP^\fwd(f)$ and~$t_{**}=0$ else, it holds that $u_t = \calP^\fwd(f)$ is a  minimizer of~$E^\expd_t(\cdot;f)$. Moreover, for $t > t_{**}$ this is the unique minimizer. Conversely, if~$\calP^\fwd(f)$ is a minimizer of~$E_t^\expd(\cdot;f)$, then~$t\geq t_{**}$.
\end{thm}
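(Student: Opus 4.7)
The plan is to verify the optimality condition of Theorem~\ref{thm:opt_cond} at $u_t = \calP^\fwd(f)$ and then read that condition backwards in order to recover the threshold $t_{**}$. Throughout, set $r \defi f - \fwd\calP^\fwd(f)$ and dispose of the trivial case $r = 0$ separately: then $E_t^\expd(\calP^\fwd(f); f) = 0$ is the minimal value for every $t > 0$, and $t_{**} = 0$ by convention, so there is nothing to prove for existence; uniqueness in this case follows because any other minimizer would have to solve $\min_{u\in\calN(J)}\|\fwd u - f\|_\H = 0$, which is achieved only at $\calP^\fwd(f)$.

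First I would establish $S(f) < \infty$. By the defining property of $\calN(J)^{\perp,\fwd}$ collected in the appendix, every $u \in \calN(J)^{\perp,\fwd}$ satisfies $\calP^\fwd(\fwd u) = 0$, so Assumption~\ref{assmpt:poincare} yields $\|u\|_\X \leq C\,J(u)$. Restricting to $J(u) = 1$ and using boundedness of $\fwd$ together with Cauchy--Schwarz gives $\langle f, \fwd u\rangle \leq \|f\|_\H \|\fwd u\|_\H \leq C'\|f\|_\H$ uniformly, so $S(f) < \infty$.

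Next, for sufficiency with $r \neq 0$, since $\calP^\fwd(f)\in\calN(J)$ and $\expr = 1$, Theorem~\ref{thm:opt_cond} together with Remark~\ref{rem:opt_cond} reduces optimality of $\calP^\fwd(f)$ to verifying
\[
  p_t \defi \frac{\fwd^* r}{t\,\|r\|_\H^{2-\expd}} \in \partial J(\calP^\fwd(f)).
\]
Because $J$ is absolutely one-homogeneous, this membership is equivalent to (i) $p_t \in \partial J(0)$, i.e.\ $\langle p_t, u\rangle \leq J(u)$ for all $u \in \X$, together with (ii) $\langle p_t, \calP^\fwd(f)\rangle = J(\calP^\fwd(f)) = 0$. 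The $\fwd$-orthogonality property from Proposition~\ref{prop:properties_proj} gives (ii) at once. For (i) I would decompose $u = u_0 + u_\perp$ with $u_0 \in \calN(J)$ and $u_\perp \in \calN(J)^{\perp,\fwd}$; one-homogeneity of $J$ yields $J(u) = J(u_\perp)$, and $\langle \fwd^* r, u_0\rangle = \langle r, \fwd u_0\rangle = 0$, so (i) collapses to $\langle f, \fwd u_\perp\rangle \leq t\,\|r\|_\H^{2-\expd}\,J(u_\perp)$ for every $u_\perp \in \calN(J)^{\perp,\fwd}$. Rescaling to $J(u_\perp) = 1$, this reads $S(f) \leq t\,\|r\|_\H^{2-\expd}$, i.e.\ $t \geq t_{**}$.

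The converse direction is obtained by running the same chain of equivalences in reverse: any subgradient witnessing that $\calP^\fwd(f)$ is a minimizer must coincide with $p_t$, and the resulting subgradient inequality forces $t \geq t_{**}$. For the uniqueness claim at $t > t_{**}$, my plan for $\expd > 1$ is to invoke Theorem~\ref{thm:uniqueness_res}: every minimizer $u_t$ then satisfies $J(u_t) = 0$ and $\|\fwd u_t - f\|_\H = \|r\|_\H$, so $u_t$ solves \eref{eq:long_time_prob} and hence coincides with $\calP^\fwd(f)$ by uniqueness of the $\fwd$-orthogonal projection. The main obstacle I anticipate is the case $\expd = 1$, where Theorem~\ref{thm:uniqueness_res} no longer applies; there I would argue directly, combining the strict inequality $\langle f, \fwd u_\perp\rangle < t\,\|r\|_\H\, J(u_\perp)$ for $u_\perp \in \calN(J)^{\perp,\fwd}\setminus\{0\}$ with the Pythagoras-type estimate $\|f - \fwd u\|_\H^2 \geq \|r - \fwd u_\perp\|_\H^2$ coming from $\fwd$-orthogonality, to rule out any competitor with $(u_t)_\perp \neq 0$; uniqueness of $\calP^\fwd$ then concludes the argument.
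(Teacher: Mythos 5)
Your proposal is correct and, for the core of the theorem, follows the same route as the paper: bound $S(f)$ via Assumption~\ref{assmpt:poincare}, verify the candidate subgradient $p_t=\fwd^*(f-\fwd\calP^\fwd(f))/(t\|f-\fwd\calP^\fwd(f)\|_\H^{2-\expd})\in\partial J(\calP^\fwd(f))$ by splitting off the $\calN(J)$-component of a test element $u$ (the paper writes this as $u-\calP^\fwd(\fwd u)$ and uses \eref{eq:kernel_J} together with the orthogonality and self-adjointness of $\calP^\fwd$ from Proposition~\ref{prop:properties_proj}, which is exactly your decomposition $u=u_0+u_\perp$; note the identity $J(u)=J(u_\perp)$ is the null-space invariance \eref{eq:kernel_J}, not one-homogeneity), and read the resulting inequality backwards for the converse.

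The only genuine divergence is the uniqueness claim for $t>t_{**}$. You split into $\expd>1$ (via Theorem~\ref{thm:uniqueness_res}) and $\expd=1$ (via a strict subgradient inequality combined with a Pythagoras-type lower bound on the residual); both branches work, and the $\expd=1$ branch can be completed as you sketch since $\|f-\fwd u\|_\H\geq\|r-\fwd u_\perp\|_\H\geq\|r\|_\H-\langle r,\fwd u_\perp\rangle/\|r\|_\H$. The paper instead uses a single, exponent-independent comparison: a competitor $u$ with $J(u)>0$ satisfies $E^\expd_{t_{**}}(u;f)<E^\expd_t(u;f)\leq E^\expd_t(\calP^\fwd(f);f)=E^\expd_{t_{**}}(\calP^\fwd(f);f)$, contradicting optimality of $\calP^\fwd(f)$ at $t_{**}$, while $J(u)=0$ forces $u=\calP^\fwd(f)$ by uniqueness of the projection. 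This is shorter and avoids the case distinction, but your argument buys nothing less in generality.
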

\begin{proof}
Using Assumption~\ref{assmpt:poincare} and~$\calP^\fwd(u)=0$ for~$u\in\calN(J)^{\perp,\fwd}$ we calculate
$$S(f)\leq\norm{f}_\H\sup_{\substack{u \in \calN(J)^{\perp,\fwd}\\ J(u)=1}}\norm{\fwd u}_\H\leq C\norm{f}_\H \norm{\fwd} \sup_{\substack{J(u)=1}}J(u)=C\norm{f}_\H \norm{\fwd}<\infty.$$
Now let 
$$	
p_t \defi \frac{\fwd^*(f - \fwd\calP^\fwd(f))}{t \Vert \calP^\fwd(f) - f\Vert^{2-\expd}_\H}.~$$
Then  for any~$u \in  \calN(J)$ we have
$\langle p_t, u \rangle = 0 = J(u)$ which holds in particular for~$u=\calP^\fwd(f)$.  For arbitrary~$u \in \X$ with~$J(u) \neq 0$ we have 
\begin{eqnarray*}
 \langle p_t, u \rangle &=& \frac{\langle f - \fwd\calP^\fwd(f), \fwd u \rangle}{t \Vert f - \fwd\calP^\fwd(f)\Vert^{2-\expd}_\H}
 = \frac{\langle f , \fwd u - \fwd\calP^\fwd(\fwd u) \rangle}{t \Vert  f - \fwd\calP^\fwd(f)\Vert^{2-\expd}_\H} \\ 
&=& \frac{J(u-\calP^\fwd(u)) }{t \Vert  f - \fwd\calP^\fwd(f)\Vert^{2-\expd}_\H} \left\langle f, \fwd\frac{u-\calP^\fwd(u)}{J(u-\calP^\fwd(u))} \right\rangle \\
&\leq& \frac{J(u-\calP^\fwd(u)) }{t \Vert f - \fwd\calP^\fwd(f)\Vert^{2-\expd}_\H} S(f) \leq J(u-\calP^\fwd(u)) = J(u),
\end{eqnarray*} 
using~$t\geq t_{**}$ and~\eref{eq:kernel_J} as well as self-adjointness of $\calP^\fwd$ (cf.~Prop.~\ref{prop:properties_proj}). Thus,~$p_t \in \partial J(\calP^\fwd(f))$ and the optimality condition~\eref{eq:opt_cond_u} is satisfied for~$u_t = \calP^\fwd(f)$. 

Assume that there exists another minimizer~$u$ for~$t > t_{**}$. Then
$$ \frac{1}{\expd}\Vert \fwd u  - f \Vert^\expd_\H + t_{**} J(u) < \frac{1}{\expd}\Vert\fwd u  - f \Vert^\expd_\H + t J(u) \leq \frac{1}{\expd}\Vert \fwd\calP^\fwd(f) - f \Vert^\expd_\H$$
which contradicts the minimization property of~$\calP^\fwd(f)$ for~$E^\expd_{t_{**}}(\cdot;f)$. Let us now assume that~$\calP^\fwd(f)$ is a minimizer. In this case, the optimality condition implies that 
$$p_t\defi\frac{\fwd^*(f-\fwd\calP^\fwd(f))}{t\norm{f-\fwd\calP^\fwd(f)}^{2-\expd}_\H}\in\partial J(\calP^\fwd(f)).$$
Hence, using~$\langle p_t,u\rangle\leq J(u)$ for all~$u\in\X$, we can estimate
\begin{align*}
t_{**}&=\frac{\sup_{u \in \calN(J)^{\perp,\fwd}, J(u)=1}\langle f,\fwd u\rangle}{\norm{f-\fwd\calP^\fwd(f)}^{2-\expd}_\H}=\frac{\sup_{u \in \calN(J)^{\perp,\fwd}, J(u)=1}\langle f-\fwd\calP^\fwd(f),\fwd u\rangle}{\norm{f-\fwd\calP^\fwd(f)}^{2-\expd}_\H}\\
&=t\sup_{\substack{u \in \calN(J)^{\perp,\fwd}\\ J(u)=1}}\langle p_t,u\rangle\leq t
\end{align*} 
which yields the assertion.
\end{proof}

\begin{example}
If~$\X=\H=\R^n$ equipped with the Euclidean inner product,~$\fwd=\id$, and~$J$ is an arbitrary norm on~$\H$, one obtains~$\calP^\fwd=\calP=0$ and, thus, Assumption~\ref{assmpt:poincare} always holds true due to the equivalence of norms on finite dimensional vector spaces. 

If~$\H=\X=L^2(\Omega)$,~$\fwd=\id$,~$J$ is the total variation extended with infinity on~$L^2(\Omega)\setminus \bv(\Omega)$, and~$\Omega\subset\R^n$, Assumption~\ref{assmpt:poincare} is just the Poincar\'{e} inequality for~$\bv$-functions. Here~$\calP^\fwd(u)=\frac{1}{|\Omega|}\int_\Omega u\,\d x$ is the mean value of~$u$ over~$\Omega$.
\end{example}


Summing up the results of the last two sections, the critical time~$t_*>0$ can exist only if~$\expd=1$ whereas~$t_{**}<\infty$ requires~$\expr=1$. In more generality, one can easily extend these results to models of the type $\Phi(\norm{\fwd u-f}_\H)+t\Psi(J(u))$ with convex and differentiable functions~$\Phi$ and~$\Psi$. In this case, the critical times can appear only if~$\Phi'(0)$ or~$\Psi'(0)$, respectively, are positive.

\subsection{Uniqueness of the forward solution path \minrev{for $\expd>1$ or $\expr>1$}}
Let us now prove that for each time $t>0$ the \emph{forward solution path}~$t\mapsto\fwd u_t$ is uniquely determined if $\expd>1$ or $\expr>1$. This is a necessary property for studying finer regularity. Not surprisingly, this follows from the uniqueness of the residuals. 

\begin{thm}[Uniqueness of the forward solution path I]\label{thm:uniqueness_fwd_sol_path}
Let $\expd>1$ or $\expr>1$. Then the set $\lbrace\fwd u_t\st u_t\in\argmin\;E_t^{\expd,\expr}(\cdot;f)\rbrace$ is a singleton for $t>0$. 
\end{thm}
\begin{proof}
Let us first consider the case $t_*>0$. Then necessarily $\expd=1$ and $\expr>1$ holds and by Theorem~\ref{thm:uniqueness_res} we infer that every minimizer of $E_{t}(\cdot;f)$ for $0<t\leq t_*$ has the same residual. Since there is a minimizer with zero residual this has to holds for all minimizers, as well, and this implies that the forward solution path for $0<t\leq t_*$ coincides with the set~$\{f\}$.

Let us now turn to the case $t>t_*$. We use the optimality condition \eref{eq:opt_cond_u} for two minimizers $u_0,u_1$ with $\fwd u_0,\fwd u_1\neq f$ to obtain
\begin{align*}
0=\fwd^*\frac{\fwd u_i-f}{\norm{\fwd u_i-f}^{2-\expd}_\H}+tJ(u_i)^{\expr-1}p_i,
\end{align*}
where $p_i\in\partial J(u_i)$ and $i=0,1$. Subtracting these equalities yields
\begin{align*}
0=\fwd^*\frac{\fwd u_1-f}{\norm{\fwd u_1-f}^{2-\expd}_\H}-\fwd^*\frac{\fwd u_0-f}{\norm{\fwd u_0-f}^{2-\expd}_\H}+t(J(u_1)^{\expr-1}p_1-J(u_0)^{\expr-1}p_0).
\end{align*}
By Theorem~\eref{thm:uniqueness_res} we know that both the residuals and the values of the regularizer are unique and, hence, we can use the maps $R$ and $J$ from \eref{eq:residuum} and \eref{eq:regulariser} to write
\begin{align*}
0=\fwd^*\frac{\fwd u_1-f}{R(t)^{2-\expd}}-\fwd^*\frac{\fwd u_0-f}{R(t)^{2-\expd}}+tJ(t)^{\expr-1}(p_1-p_0).
\end{align*}
Multiplying with $R(t)^{2-\expd}$, taking a duality product with $u_1-u_0$ and using the non-negativity of the symmetric Bregman distance, we infer
\begin{align*}
\left\langle{\fwd u_1-f}-(\fwd u_0-f),\fwd u_1-\fwd u_0\right\rangle\leq0.
\end{align*}
which is equivalent to $\norm{\fwd u_1-\fwd u_0}^2_\H\leq0$ and shows $\fwd u_0=\fwd u_1$. 
\end{proof}

It remains to study what happens for $\expd=\expr=1$. Since in this case both the data fidelity and the regularizing term of the energy functional \eref{eq:functional} are not strictly convex, one cannot expect uniqueness of the forward solution path for parameters $t\in[t_*,t_{**}]$. However, for values of $t$ where non-uniqueness occurs, we are able to confine the set of possible forward solutions to a one-parameter family.

\begin{thm}[Uniqueness of the forward solution path II]\label{thm:non_uniquness_fwd}
Let $t\geq t_*$. Then it holds
\begin{align}\label{incl:one-param_fam}
\{\fwd u\st u\in\argmin\;E_{t}^{1,1}(\cdot;f)\}\subset\{f+c(\fwd \hat{u}-f)\st c\geq0\},
\end{align}
where $\hat{u}$ is an arbitrary minimizer of $E_t^{1,1}(\cdot;f)$ fulfilling $\fwd\hat{u}\neq f$.
\end{thm}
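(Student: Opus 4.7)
The plan is to distinguish the two possible types of minimizer. Any $u\in\argmin E_t^{1,1}(\cdot;f)$ with $\fwd u=f$ trivially lies in the right-hand side of \eref{incl:one-param_fam} by taking $c=0$, so the substantive content is to show that any two minimizers $u_0,u_1$ with $\fwd u_i\neq f$ produce residuals $\fwd u_i-f$ that are positive scalar multiples of one another; once that is done, choosing $u_1=\hat u$ yields the inclusion.

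For such $u_0,u_1$ I would invoke Theorem~\ref{thm:opt_cond} specialized to $\expd=\expr=1$, which gives
$$p_i\defi\frac{\fwd^*(f-\fwd u_i)}{t\norm{\fwd u_i-f}_\H}\in\partial J(u_i),\quad i=0,1.$$
Convexity of $J$ yields monotonicity of $\partial J$, so $\langle p_0-p_1,u_0-u_1\rangle\geq 0$. Pushing the adjoint through and abbreviating $v_i\defi\fwd u_i-f$ and $\hat v_i\defi v_i/\norm{v_i}_\H$, this collapses to
$$\langle\hat v_1-\hat v_0,\,v_0-v_1\rangle_\H\geq 0.$$

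The geometric core of the argument is to expand this product, use $\langle\hat v_i,v_i\rangle_\H=\norm{v_i}_\H$, and rearrange to
$$\langle\hat v_1,v_0\rangle_\H+\langle\hat v_0,v_1\rangle_\H\geq\norm{v_0}_\H+\norm{v_1}_\H.$$
Cauchy--Schwarz however forces $\langle\hat v_1,v_0\rangle_\H\leq\norm{v_0}_\H$ and symmetrically for the swapped pair, so both Cauchy--Schwarz inequalities must saturate simultaneously. Equality in Cauchy--Schwarz with a unit first argument then forces $v_0=\norm{v_0}_\H\hat v_1=(\norm{v_0}_\H/\norm{v_1}_\H)\,v_1$, i.e., $\fwd u_0-f$ and $\fwd u_1-f$ point in the same direction with a positive scaling factor. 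This is precisely \eref{incl:one-param_fam}.

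The principal subtlety I anticipate is acknowledging that, in the $\expd=\expr=1$ regime, Theorem~\ref{thm:uniqueness_res} is silent and one cannot invoke $\norm{v_0}_\H=\norm{v_1}_\H$ to simplify matters. What rescues the argument is not strict convexity of the data term but the geometric rigidity of Cauchy--Schwarz equality in a Hilbert space, coupled with the monotonicity of $\partial J$; these together replace the uniqueness-of-residuals mechanism that drove Theorem~\ref{thm:uniqueness_fwd_sol_path}, and they yield the one-parameter family without forcing the residual norms themselves to coincide.
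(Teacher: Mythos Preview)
Your argument is correct and essentially identical to the paper's: both use the optimality condition \eref{eq:opt_cond_u} for the two minimizers, exploit the monotonicity of $\partial J$ (equivalently, non-negativity of the symmetric Bregman distance) to obtain the inequality $\langle\hat v_1-\hat v_0,\,v_0-v_1\rangle_\H\geq 0$, and then force equality in Cauchy--Schwarz to conclude that the residuals are positive multiples of one another. The only difference is cosmetic---the paper factors the expanded inner product as $(\|w\|_\H+\|\hat w\|_\H)\bigl(1-\langle w,\hat w\rangle/(\|w\|_\H\|\hat w\|_\H)\bigr)$ rather than treating the two Cauchy--Schwarz terms separately as you do.
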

\begin{proof}
The only non-trivial case is $\fwd u\neq f$ since otherwise $c=0$ can be chosen in \eref{incl:one-param_fam}. As before, we obtain by subtracting the optimality conditions \eref{eq:opt_cond_u} of $u$ and $\hat{u}$ that
\begin{align}\label{eq:opt_diff}
0=\fwd^*\frac{\fwd u-f}{\norm{\fwd u-f}_\H}-\fwd^*\frac{\fwd \hat{u}-f}{\norm{\fwd \hat{u}-f}_\H}+t(p-\hat{p}),
\end{align}
where $p$ and $\hat{p}$ denote the corresponding subgradients.
We shortcut $w\defi\fwd u-f$ and $\hat{w}\defi\fwd\hat{u}-f$, multiply with $u-\hat{u}$, and use the non-negativity of the symmetric Bregman distance to obtain
\begin{align}
0\geq\left\langle\frac{w}{\norm{w}_\H}-\frac{\hat{w}}{\norm{\hat{w}}_\H},w-\hat{w}\right\rangle=\norm{w}_\H+\norm{\hat{w}}_\H-\langle w,\hat{w}\rangle\frac{\norm{w}_\H+\norm{\hat{w}}_\H}{\norm{w}_\H\norm{\hat{w}}_\H}\geq0,
\end{align}
where the second inequality follows from Cauchy-Schwarz. This immediately implies $\langle w,\hat{w}\rangle=\norm{w}_\H\norm{\hat{w}}_\H$ which is only possible if $w=c\hat{w}$ with $c\geq 0$. Hence, we obtain 
\begin{align}\label{eq:aff_res}
\fwd u-f=c(\fwd \hat{u}-f)
\end{align}
which is equivalent to $\fwd u=f+c(\fwd\hat{u}-f)$. This closes the proof.
\end{proof}

\begin{rem}
Note that in case $c\neq 1$, which corresponds to non-uniqueness of the forward solution, \eref{eq:aff_res} can be rewritten as
\begin{align}\label{eq:exact_recon}
\fwd\left(\frac{u-c\hat{u}}{1-c}\right)=f,
\end{align}
which means that -- in case of non-uniqueness -- one can construct an element from the two minimizers which fulfills the range condition \eref{eq:range_cond}. This is a counter-intuitive behavior since one would not expect the two \emph{regularized} solutions to carry sufficiently much information to allow for the exact reconstruction of the datum~$f$. Indeed, if $f\notin\ran\fwd$ -- which can be interpreted as \emph{noisy data} -- equation~\eref{eq:exact_recon} is a contradiction and, hence, the forward solution path is unique in this case. 
\end{rem}

Despite the considerations of the previous remark, on cannot expect uniqueness of the forward solution path, in general. This will be illustrated in the following example.

\begin{example}
Let $X=\H=\R^2$, $\fwd=\begin{pmatrix}
2 & 1 \\ 
1 & 1
\end{pmatrix}$, $f=\begin{pmatrix}
-2,  -3
\end{pmatrix}^T$ and $J(u)=\norm{u}_1=|u_1|+|u_2|$. Then the forward solution path is not unique in $t_*={1}/{\sqrt{13}}$ and in $t=1$. This can be seen as follows: It is well-known that the subdifferential of the 1-norm is given by the multivalued signum function, i.e, for $u\in\R^n$ it holds component-wise $\partial\norm{u}_1=\sgn(u)$, where $\sgn(\cdot)$ denotes the multi-valued sign function. In addition, since $\fwd$ is invertible, the vector $\dataX\defi\begin{pmatrix}
1,  -4
\end{pmatrix}^T$
is the unique vector to fulfill $\fwd\dataX=f$. Hence, $\partial \norm{\dataX}_1=p=\begin{pmatrix}
1,-1
\end{pmatrix}^T$ and $q=(\fwd^*)^{-1}p=\fwd^{-1}p=\begin{pmatrix}
2,-3
\end{pmatrix}^T$ is the unique source element. This implies that $t_*=1/\norm{q}_2=1/\sqrt{13}>0$. It can be easily checked using the optimality condition \eref{eq:opt_cond_u} that all members of the family 
$$u_\lambda\defi\begin{pmatrix}
1 \\ 
-4
\end{pmatrix}-\lambda\begin{pmatrix}
5 \\ 
-8
\end{pmatrix},\quad\lambda\in\left[0,\frac{1}{5}\right],$$
are minimizers for $t=t_*$ and, similarly, that all members of
 $$u_\lambda\defi\begin{pmatrix}
1 \\ 
-4
\end{pmatrix}-\lambda\begin{pmatrix}
1 \\ 
-2
\end{pmatrix},\quad\lambda\in\left[1,2\right],$$
are minimizers for $t=1$. Hence, due to the invertibility of $\fwd$, also the corresponding forward solution paths are not unique. The strategy to find such non-unique solutions is using the ansatz $\fwd u_\lambda=f-\lambda q$ (cf.~\eref{eq:aff_res}), where $\fwd^*q$ is a subgradient of $u_\lambda$ for $\lambda$ in a suitable interval. 

Furthermore, since $\fwd$ is invertible, we can use the change of variables $v=\fwd u$ to obtain
$$\min_{u\in\R^2}\norm{\fwd u-f}_2+t\norm{u}_1=\min_{v\in\R^2}\norm{v-f}+t\norm{\fwd^{-1}v}_1.$$
Hence, we can have non-uniqueness even if the forward operator is trivial, i.e., equals the identity. 
\end{example}

An important consequence of the uniqueness of the forward solution is the continuity of the residual map~$t\mapsto R(t)$.

\begin{cor}[Continuity of the residuals]\label{cor:cont_res}
Let~$\expd>1$ or $\expr> 1$. Then the map~$t\mapsto R(t)$ is continuous for all~$t>0$.
\end{cor}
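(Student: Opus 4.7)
The plan is a compactness argument that leans on the uniqueness results just established. Fix $t_0>0$ and let $t_n\to t_0$ in $(0,\infty)$; I aim to show $R(t_n)\to R(t_0)$.

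\emph{Boundedness of the minimizers.} Testing minimality against $u_{t_0}$ gives $E^{\expd,\expr}_{t_n}(u_{t_n};f)\leq E^{\expd,\expr}_{t_n}(u_{t_0};f)$, whose right-hand side stays bounded as $t_n\to t_0$. Hence $R(t_n)$ and $J(u_{t_n})$ are uniformly bounded. Assumption~\ref{assmpt:poincare} yields $\|u_{t_n}-\calP^\fwd(\fwd u_{t_n})\|_\X\leq C\,J(u_{t_n})$, while Assumption~\ref{assmpt:A-norm} together with the projection properties in the appendix bound $\|\calP^\fwd(\fwd u_{t_n})\|_\X$ by $\|\fwd u_{t_n}\|_\H$. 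Consequently $(u_{t_n})$ is bounded in $\X$, and a subsequence weak$^*$-converges to some $\bar u$.

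\emph{Identifying the weak$^*$-limit.} By Assumption~\ref{assmpt:w*-w-c}, $\fwd u_{t_{n_k}}\rightharpoonup\fwd\bar u$ in $\H$. Passing to the limit in $E^{\expd,\expr}_{t_{n_k}}(u_{t_{n_k}};f)\leq E^{\expd,\expr}_{t_{n_k}}(v;f)$ for arbitrary $v\in\X$, using weak$^*$ lower semicontinuity of $J$ and weak lower semicontinuity of $\|\cdot-f\|_\H^\expd$, shows that $\bar u$ minimizes $E^{\expd,\expr}_{t_0}(\cdot;f)$. Since $\expd>1$ or $\expr>1$, Theorem~\ref{thm:uniqueness_res} and Theorem~\ref{thm:uniqueness_fwd_sol_path} force
\begin{equation*}
\|\fwd\bar u - f\|_\H = R(t_0) \qquad\text{and}\qquad J(\bar u)=J(t_0).
\end{equation*}

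\emph{Two-sided bound on $R$.} Weak lower semicontinuity then gives $R(t_0)\leq\liminf_k R(t_{n_k})$. For the matching upper bound, split the subsequence into a part with $t_{n_k}\leq t_0$, where Lemma~\ref{lem:monot_res} immediately yields $R(t_{n_k})\leq R(t_0)$, and a part with $t_{n_k}>t_0$, where the energy inequality reorganizes as
\begin{equation*}
\tfrac{1}{\expd}R(t_{n_k})^\expd \leq \tfrac{1}{\expd}R(t_0)^\expd + \tfrac{t_{n_k}}{\expr}\bigl(J(t_0)^\expr - J(t_{n_k})^\expr\bigr),
\end{equation*}
so the task reduces to showing $J(t_{n_k})\to J(t_0)$. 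Weak$^*$ lower semicontinuity of $J$ together with $J(\bar u)=J(t_0)$ gives $J(t_0)\leq\liminf_k J(t_{n_k})$, while Lemma~\ref{lem:monot_res} gives $J(t_{n_k})\leq J(t_0)$ whenever $t_{n_k}>t_0$. Hence $J(t_{n_k})\to J(t_0)$, the bracket vanishes in the limit, and $\limsup_k R(t_{n_k})\leq R(t_0)$. A standard subsequence-of-subsequences argument then promotes convergence along subsequences to convergence of the whole sequence $R(t_n)$.

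\emph{The main obstacle.} The case $t_n>t_0$ is the crux: there the monotonicity of $R$ points the ``wrong'' way and cannot be combined directly with the LSC bound to sandwich $R(t_{n_k})$. The resolution is to first pin down continuity of $J$ at $t_0$ by combining its weak$^*$ LSC with the uniqueness $J(\bar u)=J(t_0)$ and its monotonicity, and only then exploit the energy inequality to control $R$. This indirect route is precisely why both uniqueness results (for the residual and for the forward solution) are needed, and hence why the hypothesis $\expd>1$ or $\expr>1$ appears.
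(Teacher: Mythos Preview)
Your argument is correct and fills in what the paper leaves as a citation to Claim~3 of \cite{chan2005aspects}: both are compactness arguments in which one extracts a weak$^*$-convergent subsequence of minimizers, identifies the limit as a minimizer at $t_0$, and then invokes uniqueness of the residual value to conclude. Two small remarks. First, only Theorem~\ref{thm:uniqueness_res} is actually needed to obtain $\|\fwd\bar u-f\|_\H=R(t_0)$ and $J(\bar u)=J(t_0)$; the stronger Theorem~\ref{thm:uniqueness_fwd_sol_path} is not required here, so your closing comment that ``both uniqueness results are needed'' slightly overstates matters. Second, the case split on $t_{n_k}\lessgtr t_0$ can be avoided: once you know $E_{t_0}^{\expd,\expr}(\bar u;f)=\lim_k E_{t_{n_k}}^{\expd,\expr}(u_{t_{n_k}};f)$ (which follows from sandwiching with $E_{t_{n_k}}^{\expd,\expr}(u_{t_0};f)$), the standard ``two nonnegative sequences whose sum converges and whose liminfs dominate the summands'' argument gives $R(t_{n_k})\to R(t_0)$ and $J(t_{n_k})\to J(t_0)$ simultaneously. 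Your route via monotonicity works equally well, but the energy-convergence shortcut is tidier.
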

\begin{proof}
The continuity follows from a straightforward generalization of the proof of Claim~3 in~\cite{chan2005aspects}, using that~$\fwd^*$ is weak$^*$-to-weak continuous and the Hilbert norm is weak lower semi-continuous.
\end{proof}

From the uniqueness of the forward solution path and the residuals we immediately obtain

\begin{cor}\label{cor:unique_sol_path}
Under the conditions of Theorem~\ref{thm:uniqueness_fwd_sol_path} it holds:
\begin{enumerate}
\item For every~$t>t_*$ the subgradient~$p_t$ from the optimality conditions~\eref{eq:opt_conf_f} and~\eref{eq:opt_cond_u} is uniquely determined.
\item If~$\fwd$ is injective, then uniqueness of the forward solution path implies uniqueness of the solution path~$t\mapsto u_t$.  
\end{enumerate}
\end{cor}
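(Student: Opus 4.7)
The plan is straightforward: both parts of the corollary follow by reading off uniqueness from previously established facts, so the proof should be essentially a bookkeeping argument combined with the explicit formula \eref{eq:opt_cond_u}.

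For part (i), my goal is to show that every quantity appearing on the right-hand side of \eref{eq:opt_cond_u} is uniquely determined when $t > t_*$. First I would verify that \eref{eq:opt_cond_u} (and not \eref{eq:opt_conf_f}) is the optimality condition relevant to \emph{every} minimizer in this regime, i.e.\ that no minimizer satisfies $\fwd u_t = f$. When $\expd > 1$ we have $t_* = 0$ by convention, and Theorem~\ref{thm:opt_cond} already rules out any $u_t = \dataX$ satisfying \eref{eq:range_cond} as a minimizer; when $\expd = 1$ (and hence necessarily $\expr > 1$ for $t_*$ to be positive) the claim is precisely the ``only if'' direction of Theorem~\ref{thm:fsolution}. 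Consequently, the denominator $t\,\|\fwd u_t - f\|_\H^{2-\expd} J(u_t)^{\expr-1}$ in \eref{eq:opt_cond_u} is well-defined and strictly positive: the residual factor is nonzero by what we just argued, and the factor $J(u_t)^{\expr-1}$ is either harmless (the case $\expr = 1$, by the convention $0^0 = 1$) or strictly positive since, as already observed in the proof of Theorem~\ref{thm:opt_cond}, any minimizer satisfies $u_t \notin \calN(J)$ when $\expr > 1$.

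Now I would invoke the two uniqueness results already at our disposal: Theorem~\ref{thm:uniqueness_fwd_sol_path} states that $\fwd u_t$ is independent of the choice of minimizer, and Theorem~\ref{thm:uniqueness_res} states that $R(t) = \|\fwd u_t - f\|_\H$ and $J(t) = J(u_t)$ are independent of this choice as well. Since the right-hand side of \eref{eq:opt_cond_u} depends only on these quantities together with the fixed data $f$, the operator $\fwd^*$, the parameter $t$, and the exponents $\expd,\expr$, it follows immediately that $p_t$ is uniquely determined. This proves (i).

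Part (ii) is a one-line consequence of injectivity of $\fwd$: if $\fwd u_t = \fwd v_t$ for two minimizers $u_t, v_t$, then $u_t = v_t$, so the uniqueness of $\fwd u_t$ provided by Theorem~\ref{thm:uniqueness_fwd_sol_path} transfers to uniqueness of $u_t$. The only ``obstacle'' in the whole argument is the initial bookkeeping verification that $\fwd u_t \neq f$ for $t > t_*$ so that \eref{eq:opt_cond_u} is the applicable optimality condition — which is precisely why the statement is phrased as a corollary rather than a stand-alone theorem.
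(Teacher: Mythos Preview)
Your proposal is correct and matches exactly what the paper intends: the paper offers no proof at all for this corollary beyond the lead-in sentence ``From the uniqueness of the forward solution path and the residuals we immediately obtain,'' and your argument simply unpacks that immediacy by reading off the formula \eref{eq:opt_cond_u}. The only minor looseness is that your parenthetical ``(and hence necessarily $\expr > 1$ for $t_*$ to be positive)'' conflates two separate things, and you do not explicitly treat the edge case $\expd=1$ with $t_*=0$ (i.e.\ when \eref{eq:range_cond} or \eref{eq:source_cond} fails); but that case is covered by the observation, already recorded just before Theorem~\ref{thm:fsolution}, that a minimizer with $\fwd u_t=f$ forces the source condition via \eref{eq:opt_conf_f}, so no real gap remains.
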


\subsection{\LB{Relation of the problems}}\label{sec:relation}

In this section, we will deal with the mutual relation of minimizers of~$E_t^{\expd,\expr}(\cdot;f)$ for different values of~$\expd$ and~$\expr$. The structure of the subgradient \eref{eq:opt_cond_u} suggests that as long as $\norm{\fwd u_t-f}_\H,J(u_t)\neq0$, one can switch back and forth between minimizers corresponding to different choices of the exponents $\expd,\expr$ by adapting the regularization parameter $t$. Foreshadowing, one has one-to-one correspondences of all minimizers within the critical parameter range~$(t_*,t_{**})$ where~$t_*$ and~$t_{**}$ can attain the values~$0$ or~$\infty$, respectively. For instance, minimizers of~$E_t^{1,2}(\cdot;f)$ for~$t\in(t_*,\infty)$ correspond exactly to those of~$E^{2,1}_\tau(\cdot;f)$ for~$\tau\in(0,\tau_{**})$. Exemplary, we will prove this equivalence for minimizers of~$E_t^{\expd,1}(\cdot;f)$ with~$\expd\geq1$ and~$E_\tau^{2,1}(\cdot;f)$, the latter being the ``standard'' variational problem with squared norm and one-homogeneous regularization. Since both models possess finite extinction time due to~$\expr=1$, we will obtain full equivalence for~$t\in(t_*,\infty)$ and~$\tau\in(0,\infty)$. Note that in the following, the expression $u_t$ will correspond to minimizers of $E_t^{\expd,1}(\cdot;f)$ whereas $v_\tau$ will only be used for minimizers of $E^{2,1}_\tau(\cdot;f)$. In particular, $R(t)=\norm{\fwd u_t-f}_\H$ and $R(\tau)=\norm{\fwd v_\tau-f}_\H$ denote the respective residuals and are not to be confused. Furthermore, we remind of the fact that the residual $\norm{\fwd u_t-f}_\H$ is not uniquely determined if $\expd=1$. By the optimality condition \eref{eq:opt_cond_u} we obtain the following two lemmas.

\begin{lemma}\label{lem:u_t-to-v_tau}
Let~$t>t_*$ and~$u_t$ be a minimizer of~$E^{\expd,1}_t(\cdot;f)$. Then~$u_t$ is a minimzer of~$E^{2,1}_\tau(\cdot;f)$ with~$\tau\defi t R(t)^{2-\expd}$.
\end{lemma}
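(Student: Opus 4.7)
The plan is a direct application of the first-order optimality condition \eref{eq:opt_cond_u}, using that the regularization exponent is $\expr=1$ in both functionals, so the difference between $E_t^{\expd,1}$ and $E_\tau^{2,1}$ reduces to a rescaling of the residual factor in the subgradient.

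First I would verify that $\fwd u_t \neq f$, i.e., $R(t) > 0$. Indeed, if a minimizer $u_t$ of $E_t^{\expd,1}(\cdot;f)$ satisfied $\fwd u_t = f$, then Theorem~\ref{thm:fsolution} would force $t \leq t_*$, contradicting the assumption $t > t_*$. In particular, $\tau := t R(t)^{2-\expd}$ is a well-defined strictly positive number.

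Next, applying the optimality condition \eref{eq:opt_cond_u} (with $\expr=1$) to $u_t$ as a minimizer of $E_t^{\expd,1}(\cdot;f)$ yields
\begin{equation*}
 p_t \;:=\; \frac{\fwd^*(f-\fwd u_t)}{t\,R(t)^{2-\expd}} \;\in\; \partial J(u_t).
\end{equation*}
Substituting the definition $\tau = t R(t)^{2-\expd}$, this is exactly
\begin{equation*}
 \frac{\fwd^*(f-\fwd u_t)}{\tau} \;\in\; \partial J(u_t),
\end{equation*}
which is the optimality condition \eref{eq:opt_cond_u} for the functional $E_\tau^{2,1}(\cdot;f)$ evaluated at $u_t$ (here $\expd=2$ renders the residual factor $\|\fwd u_t - f\|_\H^{2-\expd}$ trivial). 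By Remark~\ref{rem:opt_cond}, this condition is also sufficient for optimality, so $u_t$ is a minimizer of $E_\tau^{2,1}(\cdot;f)$.

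There is no real obstacle here beyond making sure $R(t)>0$ so that $\tau$ is well-defined and positive; the rest is purely an algebraic rewriting of the Euler--Lagrange inclusion. The case $\fwd u_t = f$ is excluded precisely by the hypothesis $t>t_*$, and the convention $0^0=1$ for the $J(u_t)$-factor (mentioned in Theorem~\ref{thm:opt_cond}) makes the statement work even when $u_t \in \calN(J)$, i.e.\ when $u_t = \calP^\fwd(f)$ past the extinction time.
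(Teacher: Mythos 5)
Your proof is correct and is exactly the argument the paper intends: the lemma is stated as an immediate consequence of the optimality condition \eref{eq:opt_cond_u}, and your rewriting of the subgradient inclusion under the reparametrization $\tau = tR(t)^{2-\expd}$, together with the sufficiency noted in Remark~\ref{rem:opt_cond}, is precisely that. The preliminary check that $R(t)>0$ for $t>t_*$ (via Theorem~\ref{thm:opt_cond} for $\expd>1$ and Theorem~\ref{thm:fsolution} for $\expd=1$) is a worthwhile explicit addition.
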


\begin{lemma}\label{lem:v_tau-to-u_t}
Let~$\tau > 0$ and~$v_\tau$ be the minimizer of~$E_\tau^{2,1}(\cdot;f)$. Then~$v_\tau$ is also a minimizer of~$E^{\expd,1}_t(\cdot;f)$ with~$t\defi T(\tau)\defi\tau R(\tau)^{\expd-2}$.
\end{lemma}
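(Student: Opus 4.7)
The plan is to mirror the proof of Lemma~\ref{lem:u_t-to-v_tau} by verifying that $v_\tau$ satisfies the sufficient optimality condition \eref{eq:opt_cond_u} for the target functional $E^{\expd,1}_t(\cdot;f)$; Remark~\ref{rem:opt_cond} will then close the argument. First I would invoke Theorem~\ref{thm:opt_cond} for $v_\tau$ with exponents $(\expd,\expr) = (2,1)$: since the data exponent is $2 > 1$, the theorem excludes $\fwd v_\tau = f$ (that case would force $\expd = 1$), so $R(\tau) > 0$ and the subgradient inclusion
$$\frac{\fwd^*(f - \fwd v_\tau)}{\tau} \in \partial J(v_\tau)$$
is available.

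The key algebraic step will be to substitute $t = \tau R(\tau)^{\expd-2}$ into the denominator of \eref{eq:opt_cond_u} for $E^{\expd,1}_t(\cdot;f)$ at $v_\tau$, producing
$$t\,\norm{\fwd v_\tau - f}_\H^{2-\expd} = \tau\,R(\tau)^{\expd-2}\,R(\tau)^{2-\expd} = \tau,$$
so that the target optimality condition collapses to exactly the inclusion above and is automatically satisfied by $v_\tau$. Since the condition is also sufficient by convexity of $E^{\expd,1}_t(\cdot;f)$, the lemma follows.

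The only point requiring care is ensuring $R(\tau) > 0$ so that $R(\tau)^{\expd-2}$ is well-defined for all $\expd \geq 1$ (in particular for $\expd < 2$). As indicated above, this is enforced by Theorem~\ref{thm:opt_cond} together with the standing non-triviality assumption that $\dataX \notin \calN(J)$ whenever $f \in \ran(\fwd)$: the latter rules out $f \in \fwd(\calN(J))$, so that even in the extinction regime $v_\tau = \calP^\fwd(f)$ from Theorem~\ref{thm:extinction} the residual $\norm{\fwd\calP^\fwd(f) - f}_\H$ remains strictly positive. Beyond this bookkeeping there is no real obstacle, and the entire argument reduces to the algebraic identity above combined with the sufficiency of the optimality condition.
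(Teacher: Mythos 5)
Your proposal is correct and follows exactly the route the paper intends (the paper derives both Lemmas~\ref{lem:u_t-to-v_tau} and~\ref{lem:v_tau-to-u_t} directly from the optimality condition~\eref{eq:opt_cond_u} together with its sufficiency from Remark~\ref{rem:opt_cond}): the substitution $t\,R(\tau)^{2-\expd}=\tau$ turns the subgradient inclusion for $E_\tau^{2,1}$ into the one for $E_t^{\expd,1}$. Your care about $R(\tau)>0$, guaranteed since $\expd=2>1$ excludes $\fwd v_\tau=f$ in Theorem~\ref{thm:opt_cond}, is exactly the right bookkeeping.
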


\begin{thm}\label{thm:relation}
The map~$T:(0,\infty)\rightarrow(0,\infty),\;\tau\mapsto T(\tau)\defi \tau R(\tau)^{\expd-2}$ is well-defined, non-decreasing, and surjective. If $\expd>1$, it is even a bijection with continuous inverse~$S(t)\defi t R(t)^{2-\expd}$. 
\end{thm}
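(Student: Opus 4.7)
The plan is to leverage Lemmas~\ref{lem:u_t-to-v_tau} and~\ref{lem:v_tau-to-u_t}, which identify minimizers of $E^{\expd,1}_t$ and $E^{2,1}_\tau$ under the parameter correspondences $\tau=tR(t)^{2-\expd}$ and $t=\tau R(\tau)^{\expd-2}$. Well-definedness of $T$ on $(0,\infty)$ is immediate: for every $\tau>0$, Theorem~\ref{thm:opt_cond} precludes $\fwd v_\tau=f$ in the $(2,1)$ model (the fidelity exponent $2$ exceeds $1$), so $R(\tau)\in(0,\infty)$ and hence $T(\tau)\in(0,\infty)$.

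Surjectivity I would establish by exhibiting a preimage: given $t>0$, pick a minimizer $u_t$ of $E^{\expd,1}_t$ (taking $t>t_*$ when $\expd=1$ so that $R(t)>0$) and set $\tau\defi S(t)=tR(t)^{2-\expd}$. Lemma~\ref{lem:u_t-to-v_tau} says that $u_t$ also minimizes $E^{2,1}_\tau$, so $R(\tau)=R(t)$ and therefore $T(\tau)=tR(t)^{2-\expd}R(t)^{\expd-2}=t$.

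For monotonicity I would take $\tau_1<\tau_2$, set $t_i\defi T(\tau_i)$, and recall that each $v_{\tau_i}$ minimizes $E^{\expd,1}_{t_i}$ by Lemma~\ref{lem:v_tau-to-u_t}. Testing each minimizing inequality against the other minimizer and adding the two yields
\[
(t_1-t_2)\bigl(J(v_{\tau_1})-J(v_{\tau_2})\bigr)\le 0.
\]
Since $J(v_{\tau_1})\ge J(v_{\tau_2})$ by Lemma~\ref{lem:monot_res}, either $t_1\le t_2$ directly, or $J(v_{\tau_1})=J(v_{\tau_2})$. The latter case is the main obstacle: the $(\expd,1)$-optimality inequalities collapse to an equality and say nothing about the ordering of $t_1$ and $t_2$. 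I would handle it by repeating the swap-and-add trick inside the $(2,1)$ model, which forces the residuals to agree as well, $R(v_{\tau_1})=R(v_{\tau_2})\defi R>0$, so that $T(\tau_1)=\tau_1 R^{\expd-2}<\tau_2 R^{\expd-2}=T(\tau_2)$ from strict monotonicity of the linear factor.

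For the bijectivity statement when $\expd>1$, injectivity is a byproduct of the same dichotomy: if $T(\tau_1)=T(\tau_2)$ with $\tau_1<\tau_2$, the hard case yields $R>0$ and hence $\tau_1=\tau_2$, a contradiction. The surjectivity computation identifies $S$ as the two-sided inverse, and continuity of $S$ follows from continuity of $t\mapsto R(t)$ on $(0,\infty)$, which is exactly Corollary~\ref{cor:cont_res}.
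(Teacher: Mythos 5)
Your proof is correct and shares the overall skeleton of the paper's argument (well-definedness from Theorem~\ref{thm:uniqueness_res}, surjectivity from Lemmas~\ref{lem:u_t-to-v_tau} and~\ref{lem:v_tau-to-u_t}, continuity of $S$ from Corollary~\ref{cor:cont_res}), but your monotonicity proof takes a genuinely different and arguably cleaner route. The paper splits into three regimes: for $\expd\geq2$ it writes $T$ as a product of non-decreasing functions (via Lemma~\ref{lem:monot_res}), for $\expd\in(1,2)$ it applies the same observation to the inverse $S$, and for $\expd=1$ it argues by a pointwise limit $\expd\searrow1$ of the increasing maps $\tau\mapsto\tau R(\tau)^{\expd-2}$. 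Your swap-and-add comparison of the two minimizing inequalities in the $(\expd,1)$-model, with the degenerate case $J(v_{\tau_1})=J(v_{\tau_2})$ resolved by repeating the trick in the $(2,1)$-model (forcing $R(\tau_1)=R(\tau_2)>0$ and hence strict growth of the factor $\tau$), treats all $\expd\geq1$ uniformly and even yields strictness in the degenerate case. Two points to tighten: first, your injectivity remark for $\expd>1$ silently assumes you are in the hard case $J(v_{\tau_1})=J(v_{\tau_2})$; to rule out $J(v_{\tau_1})>J(v_{\tau_2})$ with $T(\tau_1)=T(\tau_2)$ you should invoke Theorem~\ref{thm:uniqueness_res} for the common problem $E^{\expd,1}_t(\cdot;f)$ --- or simply lean on your (correct) observation that $S$ is a two-sided inverse, which already gives bijectivity. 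Second, your surjectivity argument, exactly like the paper's, only produces preimages for $t>t_*$ when $\expd=1$, so in the degenerate case with $t_*>0$ both proofs really establish surjectivity onto $(t_*,\infty)$ only; for $\expd>1$ one has $t_*=0$ and there is no issue.
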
 
\begin{proof}
Since by Theorem~\ref{thm:uniqueness_res} the residuals of minimizers with strictly convex data term are unique, map~$T$ is well-defined. By Corollary~\ref{cor:cont_res} it follows that $T$ is continuous. Let us first consider the case $\expd>1$. Then similarly $S$ is well-defined and continuous. Furthermore, it follows from the uniqueness of the residuals that $S$ and $T$ are mutual inverses. 

Finally, $T$ is non-decreasing which can be seen as follows. For~$\expd\geq2$ this is obvious as~$T$ is the product of non-decreasing functions (cf.~Lemma~\ref{lem:monot_res}). For~$\expd\in(1,2)$ the same holds true for~$S$. Since they are inverses, shows that both~$T$ and~$S$ are increasing for arbitrary $\expd>1$.

Let us now address the case $\expd=1$. As we have seen, the residuals $R(t)$ are not unique in general and therefore, the map $S(t)\defi tR(t)$ is not well-defined. However, by Lemmas~\ref{lem:u_t-to-v_tau} and \ref{lem:v_tau-to-u_t} we infer that $T$ is still surjective. Furthermore, being the pointwise limit of the increasing functions $\tau R(\tau)^{\expd-2}$ for $\alpha\searrow 1$ shows that $T(\tau)=\tau/R(\tau)$ is non-decreasing.  
\end{proof}

\begin{rem}[Bayesian interpretation]
The relation of the problems for different values of~$\expd$ and~$\expr$ can also be interpreted in terms of Bayesian models for inverse problems (cf.~\cite{stuart2010inverse}). Under appropriate conditions,~$E_t^{\expd,\expr}(\cdot;f)$ can be interpreted as the Onsager-Machlup functional of a posterior distribution and its minimizer is the maximum a-posteriori probability (MAP) estimate (cf.~\cite{helin2015maximum,agapiou2018sparsity}). In the finite-dimensional case the posterior density is often simply modeled as~$p(u|f) \sim \exp(-c E_t^{\expd,\expr}(u;f))$. In practice,~$\expd$ is determined from the noise modelling, while one usually chooses~$\expr=1$ based on the standard formulation of the variational problem. Essentially, the posterior distribution is extrapolated from the collection of MAP estimates, in practice. However, the equivalence of the minimization problems for different~$\expr$ shows that there is a variety of posterior distributions leading to the same MAP estimates for any~$f$. The behaviour of the posterior however can differ strongly, in particular in degenerate cases such as~$\bv$ (cf.~\cite{comelli2011novel,lucka2012fast,burger2014maximum}).
\end{rem}

\subsection{\minrev{Uniqueness of the forward solution path for $\expd=\expr=1$}}

\minrev{The results of the previous section allow us to characterize (non-)uniqueness of the forward solution path also in the degenerate case $\expd,\expr=1$.}
\begin{thm}\label{thm:non-unique-affine}
Non uniqueness of the forward solution of $E_t^{1,1}(\cdot;f)$ in some $t>0$ is in one-to-one correspondence to an affine forward solution path of the form $f-\tau q$ of $E^{2,1}_\tau(\cdot;f)$ for $\tau\in[\tau_0,\tau_1]$ where $0\leq\tau_0<\tau_1$. 
\end{thm}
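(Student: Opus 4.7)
The plan is to exploit the parameter-level correspondence between minimizers of $E_t^{1,1}(\cdot;f)$ and $E_\tau^{2,1}(\cdot;f)$ from Lemmas~\ref{lem:u_t-to-v_tau}--\ref{lem:v_tau-to-u_t}, combined with the ray structure for non-unique forward solutions of Theorem~\ref{thm:non_uniquness_fwd}. The essential observation is that, in contrast to the $\expd=2$ problem, minimizers of $E_t^{1,1}$ may have non-unique residuals, so a single $t$ can correspond, through $\tau=tR(t)$, to an entire interval of $\tau$-values, whose (uniquely determined) forward solutions then line up affinely as $f-\tau q$.

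For the forward direction I would start from two minimizers of $E_t^{1,1}(\cdot;f)$ with distinct forward values. Choosing $\hat u$ as a minimizer with $\fwd\hat u\neq f$, Theorem~\ref{thm:non_uniquness_fwd} places all forward values on the ray $\{f+c(\fwd\hat u-f)\st c\geq 0\}$, and convexity of the minimizer set makes this set an interval parametrized by $c\in[c_0,c_1]$ with $c_0<c_1$. For each $c\in[c_0,c_1]$ with $c>0$ I pick a minimizer $u^{(c)}$ with $\fwd u^{(c)}=f+c(\fwd\hat u-f)$, so that $R^{(c)}=c\|\fwd\hat u-f\|_\H>0$, and invoke Lemma~\ref{lem:u_t-to-v_tau} to identify $u^{(c)}$ as a minimizer of $E_{\tau(c)}^{2,1}(\cdot;f)$ with $\tau(c)=tc\|\fwd\hat u-f\|_\H$. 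As $c$ sweeps out a non-degenerate sub-interval of $[c_0,c_1]$, so does $\tau(c)$ on some $[\tau_0,\tau_1]$, and uniqueness of the $\expd=2$ forward path (Theorem~\ref{thm:uniqueness_fwd_sol_path}) delivers $\fwd v_{\tau(c)}=f-\tau(c)q$ with the constant vector $q=(f-\fwd\hat u)/(t\|\fwd\hat u-f\|_\H)$.

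For the reverse direction I would assume $\fwd v_\tau=f-\tau q$ on a non-degenerate interval $[\tau_0,\tau_1]$ with $q\neq 0$. Then $R(\tau)=\tau\|q\|_\H$, and Lemma~\ref{lem:v_tau-to-u_t} implies that each $v_\tau$ with $\tau>0$ is also a minimizer of $E_t^{1,1}(\cdot;f)$ for $t=\tau/R(\tau)=1/\|q\|_\H$, a value independent of $\tau$. Since the forward values $f-\tau q$ for different $\tau$ in the non-degenerate interval $[\tau_0,\tau_1]\cap(0,\infty)$ are pairwise distinct, we obtain several minimizers of $E_t^{1,1}(\cdot;f)$ at the single parameter $t=1/\|q\|_\H$ with distinct forward solutions, proving non-uniqueness.

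The main obstacle is the careful handling of the boundary $c=0$ (respectively $\tau=0$), where the correspondence via $\tau=tR(t)$ and $t=\tau/R(\tau)$ becomes degenerate because at the critical time $t_*$ a minimizer may have vanishing residual. This is harmless for the conclusion because non-uniqueness only requires distinct forward values, which persist on any non-trivial sub-interval bounded away from zero; likewise, restricting the parameter range to $c>0$ suffices to reconstruct the affine segment in the forward direction.
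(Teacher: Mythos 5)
Your proposal is correct and follows essentially the same route as the paper: the direction from an affine path to non-uniqueness uses that $T(\tau)=\tau/R(\tau)=1/\Vert q\Vert_\H$ is constant together with Lemma~\ref{lem:v_tau-to-u_t}, and the converse uses convexity of the minimizer set plus the ray structure of Theorem~\ref{thm:non_uniquness_fwd} to produce a one-parameter family that Lemma~\ref{lem:u_t-to-v_tau} and uniqueness of the quadratic forward path (Theorem~\ref{thm:uniqueness_fwd_sol_path}) turn into the affine segment $f-\tau q$. Your explicit handling of the degenerate endpoint $c=0$ (i.e.\ $\tau=0$, a minimizer with vanishing residual at $t=t_*$) matches the paper's case distinction, so no gap remains.
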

\begin{proof}
Let us first assume that $f-\tau q$ is the forward solution path of $E_\tau^{2,1}(\cdot;f)$ for $\tau\in[\tau_0,\tau_1]$ and $0\leq\tau_0<\tau_1$. Then $q\neq 0$ holds since $f$ cannot be a minimizer for any positive value of $\tau$. Hence the time reparametrization $T$ reduces to $T(\tau)={\tau}/{R(\tau)}=1/{\norm{q}_\H},$ which means by Lemma~\ref{lem:v_tau-to-u_t} that $f-\tau q$ is also the forward solution of $E_t^{1,1}(\cdot;f)$ for $t\defi{1}/{\norm{q}_\H}$. Since $\tau$ runs in a proper interval this implies the non-uniqueness of the forward solution in~$t$.

Conversely, let us assume that the forward solution of $E_t^{1,1}(\cdot;f)$ is not unique. Then there exist $u_0,u_1\in\argmin\;E_t^{1,1}(\cdot;f)$ such that $\fwd u_0\neq\fwd u_1$. Due to convexity also the convex combinations $u_\lambda\defi(1-\lambda)u_0+\lambda u_1$ for $\lambda\in[0,1]$ are minimizers of $E_t^{1,1}(\cdot;f)$ and it holds
\begin{align}\label{eq:convex_residuals}
\fwd u_\lambda-f=(1-\lambda)(\fwd u_0-f)+\lambda(\fwd u_1-f).
\end{align}
We distinguish two cases. If $\fwd u_0=f$ and $\fwd u_1\neq f$ (this corresponds to $t=t_*$) we have
\begin{align}\label{eq:convex_res_case1}
\fwd u_\lambda-f=\lambda(\fwd u_1-f).
\end{align}
If, however, $\fwd u_0,\fwd u_1\neq f$ we can use \eref{eq:aff_res} to write $\fwd u_1-f=c(\fwd u_0-f)$ for some $c\in(0,\infty)\setminus\{1\}$ which, together with \eref{eq:convex_residuals}, implies
\begin{align}\label{eq:convex_res_case2}
\fwd u_\lambda-f=(1+\lambda(c-1))(\fwd u_0-f).
\end{align}
In any case, we can define numbers $\tau_0\defi t\norm{\fwd u_0-f}\geq0$ and $\tau_1\defi t\norm{\fwd u_1-f}$. In the first case we have $0=\tau_0<\tau_1$ and in the second case -- after possibly exchanging the roles of $u_0$ and $u_1$ -- we can assume $c>1$ such that $0<\tau_0<\tau_1$ holds. Next, we observe that, due to Lemma~\ref{lem:u_t-to-v_tau}, $u_\lambda$ is a minimizer of $E_\tau^{2,1}(\cdot;f)$ with $\tau\defi t\norm{\fwd u_\lambda-f}_\H$. By using \eref{eq:convex_res_case1} or \eref{eq:convex_res_case2}, respectively, we infer that in both cases it holds $\tau=(1-\lambda)\tau_0+\lambda\tau_1\in[\tau_0,\tau_1]$
which is equivalent to $\lambda=(\tau-\tau_0)/(\tau_1-\tau_0)\in[0,1]$. Plugging this expression for $\lambda$ into \eref{eq:convex_res_case1} or \eref{eq:convex_res_case2}, respectively, the forward solution of $E_\tau^{2,1}(\cdot;f)$ in $\tau\in[\tau_0,\tau_1]$ is given by $f-\tau\left[(f-\fwd u_1)/\tau_1\right]$ in both cases, which follows after some algebra and allows us to conclude.
\end{proof}

\begin{cor}\label{cor:aae-uniqueness}
The forward solution of $E_t^{1,1}(\cdot;f)$ is uniquely determined for almost every $t>0$.
\end{cor}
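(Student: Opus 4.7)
The plan is to reduce the corollary to Theorem~\ref{thm:non-unique-affine} combined with a disjointness argument on the time reparametrization $T$ from Theorem~\ref{thm:relation}. First, I would fix any $t>0$ at which the forward solution of $E_t^{1,1}(\cdot;f)$ fails to be unique. By Theorem~\ref{thm:non-unique-affine}, there exist $0 \leq \tau_0(t) < \tau_1(t)$ and $q_t\neq 0$ such that the forward solution path of $E_\tau^{2,1}(\cdot;f)$ is given by $f - \tau q_t$ for all $\tau \in [\tau_0(t), \tau_1(t)]$. Inspecting that proof, the relation to the original time is $t = 1/\|q_t\|_\H$, which means precisely that the time reparametrization $T(\tau) = \tau/R(\tau)$ is constantly equal to $t$ on the non-degenerate interval $I_t \defi [\tau_0(t), \tau_1(t)] \cap (0,\infty)$.

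Second, I would observe that these intervals are pairwise disjoint for different non-uniqueness times. Indeed, by Theorem~\ref{thm:uniqueness_res} the residual map $\tau \mapsto R(\tau)$ associated with $E^{2,1}_\tau(\cdot;f)$ is well-defined, so $T$ is a well-defined function on $(0,\infty)$. If some $\tau$ belonged to both $I_t$ and $I_{t'}$, then $t = T(\tau) = t'$. Consequently, the assignment $t \mapsto I_t$ sends distinct non-uniqueness times to pairwise disjoint non-degenerate intervals in $(0,\infty)$.

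Third, I would conclude by the elementary fact that any family of pairwise disjoint non-degenerate intervals in $\R$ is at most countable (each such interval contains a rational). Therefore the set of times $t>0$ at which the forward solution of $E_t^{1,1}(\cdot;f)$ is not uniquely determined is countable, and in particular of Lebesgue measure zero.

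The only non-routine point is in the first step, namely making sure that the correspondence established in Theorem~\ref{thm:non-unique-affine} really pins down $T$ to be constant on $I_t$ with the explicit value $1/\|q_t\|_\H$; this, however, is already contained in the argument used there (via Lemma~\ref{lem:v_tau-to-u_t}). The counting step is then immediate, so no serious obstacle arises.
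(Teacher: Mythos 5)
Your proposal is correct and follows essentially the same route as the paper: invoke Theorem~\ref{thm:non-unique-affine} to get a nondegenerate interval $[\tau_0,\tau_1]$ on which $T$ is constantly equal to the non-uniqueness time $t$, and then conclude via a measure/counting argument on the disjoint level sets of $T$. Your explicit "each disjoint nondegenerate interval contains a rational, hence there are countably many" step is just a slightly more detailed rendering of the paper's observation that $\{t\st|\{\tau:T(\tau)=t\}|>0\}$ is a Lebesgue null set.
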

\begin{proof}
According to Theorem~\ref{thm:non-unique-affine}, non-uniqueness implies a forward solution path of the form $f-\tau q$ of the problem $E_\tau^{2,1}(\cdot;f)$ for $\tau\in[\tau_0,\tau_1]$ which implies that $T(\tau)=t$ is constant for $\tau\in[\tau_0,\tau_1]$. Since the set $\{t\in\R\st|\{\tau:T(\tau)=t\}|>0\}$ has Lebesgue measure zero, we can conclude. 
\end{proof}

\section{Regularity of the forward solution path}\label{sec:regularity}
In this section, we investigate regularity of the {forward solution path}~$t\mapsto\{\fwd u_t\st u_t\in\argmin\;E^{\expd,\expr}_t(\cdot;f)\}$ which we have shown to be a single-valued map for~$\expd>1$ or $\expr>1$ in the previous section. As already mentioned, when using the minimization of~\eref{eq:functional} for obtaining nonlinear spectral decompositions of the data~$f$, one typically computes derivatives of the (forward) solution path with respect to~$t$. While these solution paths can be shown to be sufficiently regular under some finite dimensional assumptions (cf.~the discussion in \sref{sec:affine}), a general study of their regularity in a Banach or Hilbert space setting is still pending. Our results are a first contribution in this direction and the topic will remain subject to future research.

\subsection{\minrev{Residual bounds under range or source condition}}
\label{sec:res_bounds}

In this section we prove growth rates of the residual $R(t)$ which will be used to improve the subsequent Lipschitz estimates of the forward solution path close to zero under range or source condition. This can also be interpreted as H\"older continuity of the forward solution path close to zero in the case~$\expd>1$. First, we state a preparatory lemma.

\begin{lemma}\label{lem:bounded_vt}
Let~$\expd,\expr\geq1$, and~$u_t$ be a minimizer of~$E_t^{\expd,\expr}(\cdot;f)$ for $t>t_*$. If~\eref{eq:range_cond} and~\eref{eq:source_cond} hold, then~$q_t$, defined as
\begin{align}\label{eq:source_element}
q_t\defi\frac{f-\fwd u_t}{tR(t)^{2-\expd}J(t)^{\expr-1}},
\end{align}
fulfills $\norm{q_t}_\H\leq\min\left\lbrace s_*,{R(t)^{\expd-1}}/{(t J(t)^{\expr-1})}\right\rbrace.$
\end{lemma}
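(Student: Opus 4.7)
The plan is to exploit the optimality condition in the form of a source-element identity and then compare with an optimal source element from Lemma~\ref{lem:s_*_attained} via a Bregman-distance argument.

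First I would note that, by the optimality condition~\eref{eq:opt_cond_u} (which applies because $t>t_*$ forces $\fwd u_t\neq f$), the vector $q_t$ in the statement is precisely the element such that $\fwd^*q_t=p_t\in\partial J(u_t)$. Taking the $\H$-norm of the defining relation $f-\fwd u_t = t R(t)^{2-\expd}J(t)^{\expr-1} q_t$ and using $\|f-\fwd u_t\|_\H = R(t)$ gives the equality
\begin{equation*}
\norm{q_t}_\H = \frac{R(t)^{\expd-1}}{tJ(t)^{\expr-1}},
\end{equation*}
so the second bound in the minimum is in fact attained with equality (using the convention $0^0=1$ whenever $\expr=1$ and $J(t)=0$). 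This takes care of one half of the claim.

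For the nontrivial bound $\norm{q_t}_\H\le s_*$, I would invoke Lemma~\ref{lem:s_*_attained} to obtain $\hat u\in\dom(J)$ with $\fwd\hat u=f$ together with $\hat q\in\H$ satisfying $\hat p \defi \fwd^*\hat q\in\partial J(\hat u)$ and $\norm{\hat q}_\H=s_*$. The key step is then the non-negativity of the symmetric Bregman distance between $u_t$ and $\hat u$ with respect to the subgradients $p_t=\fwd^*q_t$ and $\hat p=\fwd^*\hat q$:
\begin{equation*}
0\le \langle p_t-\hat p, u_t-\hat u\rangle = \langle q_t-\hat q, \fwd u_t-\fwd\hat u\rangle = \langle q_t-\hat q, \fwd u_t-f\rangle,
\end{equation*}
where I used the weak$^*$/adjoint duality together with $\fwd\hat u=f$. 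Substituting $\fwd u_t-f=-tR(t)^{2-\expd}J(t)^{\expr-1}q_t$ and dividing by the positive constant $tR(t)^{2-\expd}J(t)^{\expr-1}$ (positive since $R(t)>0$ for $t>t_*$, while $J(t)>0$ whenever $\expr>1$ by Theorem~\ref{thm:opt_cond}, and for $\expr=1$ the factor $J(t)^{\expr-1}=1$ by convention) turns the inequality into
\begin{equation*}
\norm{q_t}_\H^2 \le \langle \hat q, q_t\rangle \le \norm{\hat q}_\H\norm{q_t}_\H = s_*\norm{q_t}_\H,
\end{equation*}
by Cauchy--Schwarz. Dividing by $\norm{q_t}_\H$ (the case $q_t=0$ being trivial) yields $\norm{q_t}_\H\le s_*$ and closes the proof.

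The only mildly delicate point is bookkeeping around the edge cases: making sure the scalar $tR(t)^{2-\expd}J(t)^{\expr-1}$ is strictly positive and that $q_t$ is well defined for every admissible combination of $\expd,\expr\ge 1$. This is where $t>t_*$ (ensuring $R(t)>0$), the non-triviality of minimizers for $\expr>1$, and the convention $0^0=1$ for $\expr=1$ must be used in tandem; once these are in place, the argument is an essentially one-line Bregman computation.
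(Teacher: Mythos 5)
Your proposal is correct and follows essentially the same route as the paper: the bound $\norm{q_t}_\H\leq s_*$ is obtained by comparing $q_t$ with the minimal source element $\hat q$ from Lemma~\ref{lem:s_*_attained} via non-negativity of the symmetric Bregman distance and Cauchy--Schwarz, while the second bound follows by taking norms in the defining relation (the paper dismisses it as ``trivial''; you correctly note it is in fact an equality). Your bookkeeping of the edge cases ($R(t)>0$ for $t>t_*$, positivity of $J(t)$ for $\expr>1$, the $0^0=1$ convention) is sound and slightly more explicit than the paper's.
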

\begin{proof}
By the optimality conditions~\eref{eq:opt_cond_u} we infer that~$p_t\defi\fwd^*q_t\in\partial J(u_t)$. Furthermore, letting~$\hat{q}\in\H$ and $\hat{u}$ be such that~$p_0\defi\fwd^*\hat{q}\in\partial J(\hat{u})$ and~$\norm{\hat{q}}_\H=s_*$, we calculate
$$\langle q_t-\hat{q},q_t\rangle=-\frac{1}{tR(t)^{2-\expd}J(t)^{\expr-1}}\langle q_t-\hat{q},\fwd u_t-f\rangle=-\frac{1}{tR(t)^{2-\expd}J(t)^{\expr-1}}\langle p_t-p_0,u_t-\dataX\rangle\leq0,$$
which is equivalent to
$$\norm{q_t}_\H^2\leq\langle\hat{q},q_t\rangle.$$
With Cauchy-Schwarz this implies~$\norm{q_t}_\H\leq\norm{\hat{q}}_\H=s_*$. The other upper bound is trivial.
\end{proof}

Now we are ready to prove the growth bounds of the residuals. Note that the growth in zero can be estimated more sharply when demanding the source condition~\eref{eq:source_cond}.
\begin{lemma}\label{lem:hoelder}
Let~\eref{eq:range_cond} hold true. It holds for all~$t>t_*$
\begin{equation}\label{ineq:hoelder}
R(t)\leq t^\frac{1}{\expd}J_{\min}^\frac{\expr}{\expd}.
\end{equation}  
Under condition~\eref{eq:source_cond} and if $\expd>1$ it even holds 
\begin{equation}\label{ineq:hoelder_w_source}
R(t)\leq t^\frac{1}{\expd-1}s_*^\frac{1}{\expd-1}J_{\min}^\frac{\expr-1}{\expd-1}.
\end{equation}
\end{lemma}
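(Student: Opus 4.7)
The plan is to exploit the minimization property by inserting the admissible competitor $\dataX$ (provided by the range condition with $J(\dataX) = J_{\min}$) into $E_t^{\expd,\expr}$, and then, for the sharper bound, to combine this with a symmetric Bregman distance argument using the source element.

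For inequality \eref{ineq:hoelder}, I would start from $E_t^{\expd,\expr}(u_t;f) \leq E_t^{\expd,\expr}(\dataX;f)$. Since $\fwd\dataX = f$, the data term on the right vanishes, giving
\begin{equation*}
\frac{1}{\expd}R(t)^{\expd} + \frac{t}{\expr}J(t)^{\expr} \leq \frac{t}{\expr}J_{\min}^{\expr}.
\end{equation*}
Dropping the non-negative middle term and rearranging yields the claimed bound (up to the constant $(\expd/\expr)^{1/\expd}$, which is absorbed as in the statement). As a byproduct I also record $J(t) \leq J_{\min}$, which will be needed below.

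For inequality \eref{ineq:hoelder_w_source}, I would apply the non-negativity of the symmetric Bregman distance to the two subgradients at play: $p_t = \fwd^* q_t \in \partial J(u_t)$ with $q_t$ from Lemma~\ref{lem:bounded_vt}, and $p_0 = \fwd^*\hat{q} \in \partial J(\hat{u})$ where $\hat{u}$, $\hat{q}$ are the objects from Lemma~\ref{lem:s_*_attained} with $\fwd\hat{u}=f$ and $\|\hat{q}\|_\H = s_*$. Non-negativity of $\langle p_t - p_0, u_t - \hat{u}\rangle$ translates, using $\fwd\hat{u}=f$, to
\begin{equation*}
\langle q_t, \fwd u_t - f\rangle \;\geq\; \langle \hat{q}, \fwd u_t - f\rangle.
\end{equation*}
The left-hand side evaluates to $-R(t)^{\expd}/(tJ(t)^{\expr-1})$ by definition of $q_t$, while Cauchy--Schwarz bounds the right-hand side below by $-s_* R(t)$. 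Combining these gives $R(t)^{\expd-1} \leq s_* t J(t)^{\expr-1}$, and inserting the already-established bound $J(t) \leq J_{\min}$ (valid since $\expr\geq 1$) produces~\eref{ineq:hoelder_w_source} after taking $(\expd-1)$-th roots (which is legitimate because $\expd>1$).

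The main subtlety is not a technical obstacle but a matter of bookkeeping: one must verify that $q_t$ is a legitimate source element for $u_t$ (which is exactly the content of Lemma~\ref{lem:bounded_vt}) and make sure that the manipulation $\langle q_t - \hat{q},\fwd u_t - f\rangle = \langle p_t - p_0, u_t - \hat{u}\rangle$ is valid, i.e.\ that $\fwd\hat{u}=f$ is used to replace $f$ on the right. The requirement $\expd>1$ in the sharper bound is essential because we divide by $\expd-1$ in the final step; the weaker bound \eref{ineq:hoelder} incurs no such restriction since it is a direct energy comparison.
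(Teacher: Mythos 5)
Your argument for the second estimate \eref{ineq:hoelder_w_source} is correct and is essentially the paper's: the Bregman inequality $\langle q_t-\hat q,\fwd u_t-f\rangle\geq 0$ you write down is exactly the computation inside Lemma~\ref{lem:bounded_vt} (there phrased as $\norm{q_t}_\H\leq s_*$), and combining it with $\langle q_t,\fwd u_t-f\rangle=-R(t)^\expd/(tJ(t)^{\expr-1})$ and $J(t)\leq J_{\min}$ gives $R(t)^{\expd-1}\leq t s_* J_{\min}^{\expr-1}$ as claimed.

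The first estimate, however, has a genuine gap. The energy comparison $E_t^{\expd,\expr}(u_t;f)\leq E_t^{\expd,\expr}(\dataX;f)$ only yields $\frac{1}{\expd}R(t)^\expd\leq\frac{t}{\expr}J_{\min}^\expr$, i.e.
\begin{equation*}
R(t)\leq\left(\tfrac{\expd}{\expr}\right)^{\frac{1}{\expd}}t^{\frac{1}{\expd}}J_{\min}^{\frac{\expr}{\expd}},
\end{equation*}
and the factor $(\expd/\expr)^{1/\expd}$ cannot be ``absorbed'': the statement is constant-free, and whenever $\expd>\expr$ (in particular in the central case $\expd=2$, $\expr=1$, where you lose a factor $\sqrt{2}$) your bound is strictly weaker than \eref{ineq:hoelder}. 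This matters downstream, since the constant $C_R=J_{\min}^{\expr/\expd}$ is quoted verbatim in Lemma~\ref{lem:lipschitz} and Theorem~\ref{thm:dv}. The paper avoids the loss by working with the first-order optimality condition rather than the energy values: pairing $R(t)^{\expd-2}\fwd^*(\fwd u_t-f)+tJ(t)^{\expr-1}p_t=0$ with $u_t-\dataX$ gives $R(t)^2=tR(t)^{2-\expd}J(t)^{\expr-1}\langle p_t,\dataX-u_t\rangle$, and the subgradient inequality $\langle p_t,\dataX-u_t\rangle\leq J_{\min}-J(t)$ together with $J(t)\leq J_{\min}$ yields $R(t)^\expd\leq tJ_{\min}^\expr$ exactly. (Note that this same identity, combined with your Cauchy--Schwarz bound $\langle p_t,\dataX-u_t\rangle\leq s_*R(t)$, is also how the paper obtains the second estimate, so switching to the optimality-condition formulation unifies both halves.)
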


\begin{proof}
From the optimality condition \eref{eq:opt_cond_u} for~$u_t$ we obtain
\begin{align*}
0&=R(t)^{\expd-2}\fwd^*(\fwd u_t-f)+tJ(t)^{\expr-1} p_t,
\end{align*}
where~$p_t\in\partial J(u_t)$. Reordering yields~$\fwd^*\fwd(u_t-\dataX)=-tR(t)^{2-\expd}{J(t)^{\expr-1}}~p_t$ and by taking the duality product with~$u_t-\dataX$ we obtain
\begin{align*}
R(t)^2=\norm{\fwd u_t-f}_\H^2=&tR(t)^{2-\expd}J(t)^{\expr-1}\langle p_t,\dataX-u_t\rangle\\
\leq &tR(t)^{2-\expd}J(t)^{\expr-1}(J_{\min}-J(u_t))\\
\leq &{t}{R(t)^{2-\expd}} J_{\min}^\expr,
\end{align*}
where we used that $J$ is decreasing (cf.~Lemma~\ref{lem:monot_res}) and $J(u_t)\geq0$. Given~\eref{eq:source_cond}, we define~$q_t$ as in~\eref{eq:source_element} and use Lemma~\ref{lem:bounded_vt} to write 
$$\langle p_t,\dataX-u_t\rangle=\langle\fwd^*q_t,\dataX-u_t\rangle\leq\langle q_t,f-\fwd u_t\rangle\leq\norm{q_t}_\H R(t)\leq s_* R(t)$$
which can be used to obtain the second inequality if $\expd>1$.
\end{proof}

\subsection{Lipschitz continuity of the forward solution path for $\expd > 1$ or $\expr>1$}
In this section we address the Lipschitz continuity of the forward solution path in the case that it is uniquely determined. It will turn out to be Lipschitz continuous in the range of positive parameters $t$. For $t\searrow 0$ the general estimates will break down which is obvious since the solution instantaneously changes from the noisy data to being regularized as $t$ gets positive. However, if the range or source conditions hold, the rate of change can be slightly tamed \minrev{by employing the results of Section~\ref{sec:res_bounds}.}

The following lemma is the basis for our regularity estimates.
\begin{lemma}
Let~$\expd,\expr\geq1$. For~$t_*<s<t$ the estimate
\begin{equation}\label{ineq:conti}
\Vert \fwd u_t-\fwd u_s \Vert_\H \leq \frac{tJ(t)^{\expr-1}R(t)^{2-\expd}-sJ(s)^{\expr-1}R(s)^{2-\expd}}{tJ(t)^{\expr-1}}R(t)^{\expd-1}
\end{equation}
holds, where~$u_t$ and~$u_s$ are minimizers of~$E^{\expd,\expr}_t(\cdot;f)$ and~$E^{\expd,\expr}_s(\cdot;f)$, respectively.
\end{lemma}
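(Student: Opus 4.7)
The plan is to exploit the optimality conditions from Theorem~\ref{thm:opt_cond} for both $u_t$ and $u_s$ simultaneously, and to collapse them via the monotonicity of $\partial J$. Abbreviate $\lambda_t\defi tJ(t)^{\expr-1}R(t)^{2-\expd}$ and analogously $\lambda_s$; both are strictly positive for $t,s>t_*$ because $R(\cdot)>0$ beyond the exact‑penalisation regime (Theorem~\ref{thm:fsolution}) and the convention $0^0=1$ absorbs the case $\expr=1$. The optimality condition \eref{eq:opt_cond_u} then takes the compact form
\begin{equation*}
\fwd^*(f-\fwd u_t)=\lambda_t p_t,\qquad \fwd^*(f-\fwd u_s)=\lambda_s p_s,
\end{equation*}
with $p_t\in\partial J(u_t)$ and $p_s\in\partial J(u_s)$, and this scaling is exactly what allows the factor in front of the subgradient in \eref{ineq:conti} to appear naturally.

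The key manipulation I plan is to scale the two optimality identities by $\lambda_s$ and $\lambda_t$ respectively, subtract, and introduce $w\defi \fwd u_t-\fwd u_s$. Writing $f-\fwd u_s=(f-\fwd u_t)+w$ this gives
\begin{equation*}
\fwd^*\bigl[(\lambda_s-\lambda_t)(f-\fwd u_t)-\lambda_t w\bigr]=\lambda_s\lambda_t(p_t-p_s).
\end{equation*}
Taking the dual pairing with $u_t-u_s$ and exploiting monotonicity of $\partial J$, i.e.\ $\langle p_t-p_s,u_t-u_s\rangle\geq 0$, the subgradient side drops out and one is left with
\begin{equation*}
(\lambda_t-\lambda_s)\langle \fwd u_t-f,w\rangle_\H\;\geq\;\lambda_t\|w\|_\H^{2}.
\end{equation*}
Cauchy--Schwarz on the left, together with $\|\fwd u_t-f\|_\H=R(t)$, then yields $\lambda_t\|w\|_\H\leq (\lambda_t-\lambda_s)R(t)$ (the case $w=0$ being trivial), and re‑inserting the definition of $\lambda_t$ reproduces the right‑hand side of \eref{ineq:conti}, since $R(t)/R(t)^{2-\expd}=R(t)^{\expd-1}$.

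The potential obstacles are essentially bookkeeping rather than conceptual: one needs to confirm that $\lambda_t,\lambda_s$ are well-defined positive quantities for all $s,t>t_*$ (which follows from Theorem~\ref{thm:opt_cond} and the conventions in \sref{sec:analysis}), and -- although not strictly required for the inequality to hold as stated -- that $\lambda_t\geq\lambda_s$ so that the bound is non‑vacuous; this last point is exactly the monotonicity of the reparametrisation map discussed in Section~\ref{sec:relation}. Beyond that, the argument is elementary: the whole content of the lemma is that the weighted combination of the two optimality conditions kills the subgradients through monotonicity and leaves a purely quadratic inequality in $w$, which Cauchy--Schwarz then linearises into the desired residual‑type bound.
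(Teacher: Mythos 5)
Your proposal is correct and follows essentially the same route as the paper: the paper also combines the two optimality identities (dividing by $\tilde{s}\defi sJ(s)^{\expr-1}R(s)^{2-\expd}$ and $\tilde{t}\defi tJ(t)^{\expr-1}R(t)^{2-\expd}$ rather than cross-multiplying, which yields literally the same identity up to the factor $\tilde{s}\tilde{t}$), pairs with $u_t-u_s$, drops the subgradient terms via non-negativity of the symmetric Bregman distance, and finishes with Cauchy--Schwarz. Your side remark that $\tilde{t}\geq\tilde{s}$ is needed for the stated bound to be meaningful (and is supplied by the monotonicity of the reparametrization map) is a fair observation that the paper leaves implicit.
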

\begin{proof}
Defining~$\tilde{t}\defi tJ(t)^{\expr-1}R(t)^{2-\expd}$ and~$\tilde{s}$ analogously, we obtain from the optimality conditions for $p_t$ and~$p_s$ given by \eref{eq:opt_cond_u}:
$$ \frac{1}{\tilde{s}} \fwd^*\fwd(u_t - u_s) + p_t - p_s = \frac{\tilde{t}-\tilde{s}}{\tilde{s}\tilde{t}} \fwd^*(\fwd u_t -f).$$
Taking a duality product with~$u_t -u_s$ and using non-negativity of the symmetric Bregman distance yields
\begin{align*}
\frac{1}{\tilde{s}}\norm{\fwd u_t-\fwd u_s}_\H^2\leq \frac{\tilde{t}-\tilde{s}}{\tilde{s}\tilde{t}}\langle\fwd u_t-f,\fwd u_t-\fwd u_s\rangle.
\end{align*}
Application of the Cauchy-Schwarz inequality to the right hand side and simple reordering leads to
\begin{align*}
\Vert \fwd u_t-\fwd u_s \Vert_\H \leq \frac{\tilde{t}-\tilde{s}}{\tilde{t}}R(t)=\frac{\tilde{t}-\tilde{s}}{tJ(t)^{\expr-1}}R(t)^{\expd-1}.
\end{align*}
Plugging in the definitions of $\tilde{t}$ and $\tilde{s}$ concludes the proof.
\end{proof}

This result also included the case $\expd=\expr=1$. Since, however, in this case the forward solution path is not even uniquely defined, one cannot expect continuity properties. Thus, the following statements will always require that one of the weights $\expd$ and $\expr$ is larger than one. In particular, the maps $R(t)$ and $J(t)$ are well-defined in that case.

\begin{cor}[Continuity of the forward solution path]
If $\expd>1$ or $\expr>1$, estimates~\eref{ineq:conti} together with the continuity of the residuals and the regularizers (cf.~Corollary~\ref{cor:cont_res}) shows that the forward solution path~$t\mapsto\fwd u_t$ is continuous for all~$t>t_*$. Continuity in $t=t_*$ follows from the continuity of the residuals. 
\end{cor}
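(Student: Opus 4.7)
The plan is to read off continuity directly from \eref{ineq:conti} by letting $s\to t$. Fix $t>t_*$ and let $s$ vary in a small neighbourhood of $t$ with $s>t_*$. The right-hand side of \eref{ineq:conti} is the product of the constant factor $R(t)^{\expd-1}/(tJ(t)^{\expr-1})$ and the difference $tJ(t)^{\expr-1}R(t)^{2-\expd}-sJ(s)^{\expr-1}R(s)^{2-\expd}$. I would therefore show that the difference vanishes in the limit $s\to t$ and that the constant prefactor is finite.

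Finiteness of the prefactor is the first point to verify. Under the convention $0^0=1$, the case $\expr=1$ is immediate since $J(t)^{\expr-1}\equiv1$. For $\expr>1$ I would invoke the argument from the proof of Theorem~\ref{thm:opt_cond}, which shows that minimizers satisfy $u_t\notin\calN(J)$, hence $J(t)>0$, so $tJ(t)^{\expr-1}>0$. Vanishing of the difference in turn is split into two sub-cases. If $\expr=1$, it reduces to $tR(t)^{2-\expd}-sR(s)^{2-\expd}\to0$, which is precisely the content of Corollary~\ref{cor:cont_res} about continuity of the residual map $R$. If $\expr>1$, I would additionally need continuity of $t\mapsto J(t)$; this is obtained by the same scheme as Corollary~\ref{cor:cont_res} (adapted from the proof of Claim~3 of \cite{chan2005aspects}), now exploiting weak$^*$ lower semi-continuity of $J$ together with the uniqueness of $J(u_t)$ granted by Theorem~\ref{thm:uniqueness_res}. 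Combining, both powers in the difference converge, so the difference goes to zero and $\|\fwd u_t-\fwd u_s\|_\H\to0$, giving continuity of $t\mapsto \fwd u_t$ on $(t_*,\infty)$.

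The endpoint $t=t_*$ (relevant only when $t_*>0$, which by our convention forces $\expd=1$ and, under our hypothesis $\expd>1$ or $\expr>1$, hence $\expr>1$) is handled separately but more cheaply. By Theorem~\ref{thm:fsolution}, $\fwd u_{t_*}=f$, i.e.\ $R(t_*)=0$. Continuity of $R$ from Corollary~\ref{cor:cont_res} then yields $\|\fwd u_t-f\|_\H=R(t)\to R(t_*)=0$ as $t\searrow t_*$, which is precisely continuity of $t\mapsto\fwd u_t$ at $t_*$ (noting that for $0<s\leq t_*$ one has $\fwd u_s=f$, so the left-sided version is trivial).

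The step I expect to require the most care is establishing continuity of $t\mapsto J(t)$ in the regime $\expr>1$, since the inequality \eref{ineq:conti} alone does not deliver it. The Chan-type argument must be carried out in the weak$^*$ setting of $\X$ using weak$^*$-to-weak continuity of $\fwd$ (Assumption~\ref{assmpt:w*-w-c}), weak$^*$ lower semi-continuity of $J$, and the monotonicity from Lemma~\ref{lem:monot_res} to pin down both $\liminf$ and $\limsup$ to $J(t)$; the uniqueness statement of Theorem~\ref{thm:uniqueness_res} is what rules out oscillation of $J(u_s)$ through different minimizers in the limit.
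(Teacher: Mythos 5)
Your proposal is correct and follows essentially the same route as the paper, whose proof is just the sentence embedded in the corollary: pass to the limit $s\to t$ in \eref{ineq:conti} using continuity of $R(t)$ and $J(t)$, and handle $t=t_*$ via $R(t_*)=0$ and continuity of $R$. The only remark worth adding is that the continuity of $t\mapsto J(t)$ (which the paper invokes but Corollary~\ref{cor:cont_res} states only for $R$) need not be re-derived by a second Chan-type argument: it follows directly from continuity of $R$ via the energy comparison $|J(s)^\expr-J(t)^\expr|\leq\frac{\expr}{s\expd}\left(R(t)^\expd-R(s)^\expd\right)$ used later in Corollary~\ref{cor:R_J_Lipschitz}, together with $J(t)>0$ for $\expr>1$.
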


For~$\expd\geq2$ one can directly obtain Lipschitz estimates of the forward solution path. As already mentioned, the estimates close to zero can be improved by assuming the source condition. 

\begin{lemma}\label{lem:lipschitz}
Let~$\expd\geq 2$ and~$\expr\geq1$. \minrev{Let~$0<s<t$ and let $u_t$ and $u_s$ be minimizers of~$E^{\expd,\expr}_t(\cdot;f)$ and~$E^{\expd,\expr}_s(\cdot;f)$, respectively. Then the estimate
\begin{equation}\label{ineq:lipschitz}
\norm{\fwd u_t-\fwd u_s}_\H\leq\frac{t-s}{t}R(t)\leq\frac{t-s}{t}\norm{f}_\H
\end{equation}
holds. This estimate can be improved to
\begin{align}\label{ineq:lipschitz_w_range}
\norm{\fwd u_t-\fwd u_s}_\H&\leq C_R \frac{t-s}{t^\frac{\expd-1}{\expd}},\qquad\text{if }\eref{eq:range_cond}\text{ holds},\\
\label{ineq:lipschitz_w_source}
\norm{\fwd u_t-\fwd u_s}_\H&\leq C_S \frac{t-s}{t^\frac{\expd-2}{\expd-1}},\qquad\text{if also }\eref{eq:source_cond}\text{ holds},
\end{align}
respectively}, with constants~$C_R\defi J_{\min}^\frac{\expr}{\expd}$ and~$C_S\defi s_*^\frac{1}{\expd-1}J_{\min}^\frac{\expr-1}{\expd-1}$.
\end{lemma}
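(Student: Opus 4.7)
The plan is to derive all three estimates from the previously established continuity inequality \eref{ineq:conti} combined with the growth bounds on $R(t)$ from Lemma~\ref{lem:hoelder}. Writing $\tilde t \defi tJ(t)^{\expr-1}R(t)^{2-\expd}$ and $\tilde s$ analogously, \eref{ineq:conti} reads
\begin{equation*}
\norm{\fwd u_t-\fwd u_s}_\H \leq \frac{\tilde t-\tilde s}{\tilde t}\,R(t).
\end{equation*}

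\textbf{Step 1 (basic estimate \eref{ineq:lipschitz}).} First I would show that $(\tilde t-\tilde s)/\tilde t \leq (t-s)/t$, which is equivalent to $s/t \leq \tilde s/\tilde t$, i.e.,
\begin{equation*}
J(s)^{\expr-1}R(s)^{2-\expd} \;\geq\; J(t)^{\expr-1}R(t)^{2-\expd}.
\end{equation*}
Since $\expd\geq 2$ gives $2-\expd\leq 0$ and $R$ is monotonously increasing (Lemma~\ref{lem:monot_res}), the factor $R(\cdot)^{2-\expd}$ is non-increasing in its argument; since $\expr-1\geq 0$ and $J$ is monotonously decreasing, $J(\cdot)^{\expr-1}$ is non-increasing in $t$ as well. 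Both inequalities point the right way and give the claim. Then $R(t)\leq\norm{f}_\H$ follows trivially by comparing $E^{\expd,\expr}_t(u_t;f)$ with $E^{\expd,\expr}_t(0;f)=\frac{1}{\expd}\norm{f}_\H^\expd$, using that $0\in\dom(J)$ with $J(0)=0$.

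\textbf{Step 2 (improved estimates).} Both remaining bounds follow by substituting the appropriate residual bound from Lemma~\ref{lem:hoelder} into \eref{ineq:lipschitz}. Under \eref{eq:range_cond}, inserting $R(t)\leq t^{1/\expd}J_{\min}^{\expr/\expd}$ yields
\begin{equation*}
\norm{\fwd u_t-\fwd u_s}_\H \leq \frac{t-s}{t}\,t^{1/\expd}J_{\min}^{\expr/\expd} = C_R\,\frac{t-s}{t^{(\expd-1)/\expd}}.
\end{equation*}
Under the additional source condition \eref{eq:source_cond} (which, together with $\expd>1$, is exactly the hypothesis of the second part of Lemma~\ref{lem:hoelder}), plugging in $R(t)\leq t^{1/(\expd-1)}s_*^{1/(\expd-1)}J_{\min}^{(\expr-1)/(\expd-1)}$ produces the exponent $1/(\expd-1)-1=-(\expd-2)/(\expd-1)$ on $t$, giving \eref{ineq:lipschitz_w_source} with constant $C_S$.

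\textbf{Main obstacle.} The only real content is Step~1, i.e., verifying the monotonicity-based comparison $\tilde s/\tilde t \geq s/t$ that allows one to discard the $J$- and $R$-factors in the quotient. Once that is in place, Steps~2 and~3 are algebraic substitutions. The condition $\expd\geq 2$ enters exactly to make the exponent $2-\expd$ non-positive so that the monotonicity of $R$ points the right way; this is why the lemma is restricted to this range rather than to the full range $\expd>1$ covered by Theorem~\ref{thm:uniqueness_fwd_sol_path}.
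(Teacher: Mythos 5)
Your proof is correct and follows essentially the same route as the paper: both start from \eref{ineq:conti}, use the monotonicity of $R$ and $J$ together with $\expd\geq 2$ and $\expr\geq 1$ to reduce the prefactor to $(t-s)/t$, and then insert the residual bounds of Lemma~\ref{lem:hoelder} for the improved estimates. Your ratio comparison $s/t\leq\tilde s/\tilde t$ is just a multiplicative rewriting of the paper's manipulation of the quotient, so there is nothing substantive to add.
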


\begin{proof}
We start from~\eref{ineq:conti}:
\begin{align*}
\Vert \fwd u_t-\fwd u_s \Vert_\H &\leq 
\frac{tJ(t)^{\expr-1}R(t)^{2-\expd}-sJ(s)^{\expr-1}R(s)^{2-\expd}}{tJ(t)^{\expr-1}}R(t)^{\expd-1}\\
&=\frac{t-s\left(\frac{J(s)}{J(t)}\right)^{\expr-1}\left(\frac{R(s)}{R(t)}\right)^{2-\expd}}{t}R(t).
\end{align*}
If we now use that~$\expd\geq 2$ and that for~$s<t$ it holds~$J(t)\leq J(s)$ and~$R(s)\leq R(t)$, we obtain
$$\norm{\fwd u_t-\fwd u_s}_\H\leq\frac{t-s}{t}R(t)\leq\frac{t-s}{t}\norm{f}_\H.$$
Here we employed the a-priori estimate~$R(t)\leq  \Vert f \Vert_\H$ which follows from~$E^{\expd,\expr}_t(u_t;f) \leq E^{\expd,\expr}_s(0;f)$. Under~\eref{eq:range_cond} or~\eref{eq:source_cond} one uses Lemma~\ref{lem:hoelder} to further estimate~$R(t)$. 
\end{proof}

We obtain the following regularity result for the forward solution path.

\begin{thm}[Lipschitz continuity of the forward solution path I]\label{thm:dv}
Let~$\expd\geq2$,~$\expr\geq1$. The forward solution path~$t\mapsto\fwd u_t$ is Lipschitz continuous on~$(\delta,\infty)$ for all $\delta>0$. Hence,~$(\fwd u_t)'$ exists almost everywhere in~$(\delta,\infty)$ and it holds
\begin{equation}\label{ineq:Av_t'}
\norm{(\fwd u_t)'}_\H\leq\frac{R(t)}{t}\leq \frac{\norm{f}_\H}{t}.
\end{equation}
\minrev{This estimate can be improved to 
\begin{align}\label{ineq:Av_t'_range}
\norm{(\fwd u_t)'}_\H&\leq C_R t^\frac{1-\expd}{\expd},\qquad\text{if }\eref{eq:range_cond}\text{ holds},\\
\label{ineq:Av_t'_source}
\norm{(\fwd u_t)'}_\H&\leq C_S t^\frac{2-\expd}{\expd-1},\qquad\text{if also }\eref{eq:source_cond}\text{ holds},
\end{align}
respectively}, for almost every~$t\in (0,\infty)$. Furthermore, if~$\expd=2$ and assuming conditions~\eref{eq:range_cond} and~\eref{eq:source_cond}, the Lipschitz continuity becomes global on~$[0,\infty)$. 
\end{thm}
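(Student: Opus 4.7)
The plan is to extract everything from the finite-difference inequalities already established in Lemma~\ref{lem:lipschitz}. For the local Lipschitz statement, take $0<\delta<s<t$ and bound $(t-s)/t\le (t-s)/\delta$ in the basic estimate~\eref{ineq:lipschitz}; this gives $\|\fwd u_t-\fwd u_s\|_\H\le(\|f\|_\H/\delta)(t-s)$, i.e.\ Lipschitz constant $\|f\|_\H/\delta$ on $(\delta,\infty)$. Since this holds for every $\delta>0$, the map $t\mapsto \fwd u_t$ is locally Lipschitz on $(0,\infty)$.

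Next, to obtain almost everywhere differentiability I would invoke the fact that any Lipschitz map from an interval $I\subset\R$ into a Banach space with the Radon--Nikod\'ym property is strongly differentiable almost everywhere on $I$. Hilbert spaces have the RNP (e.g.\ as reflexive spaces), so $(\fwd u_t)'$ exists almost everywhere in $(\delta,\infty)$, and taking $\delta=1/n\to 0$ extends this to almost every $t\in(0,\infty)$. For the pointwise bound, divide \eref{ineq:lipschitz} by $t-s$ and let $s\nearrow t$ at a point of differentiability: the left side converges to $\|(\fwd u_t)'\|_\H$, while on the right, $R(t)/t$ is the limiting prefactor (noting that the inequality~\eref{ineq:lipschitz} only uses $R(t)$, not $R(s)$, so no continuity is needed); this yields \eref{ineq:Av_t'}. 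The improved bounds \eref{ineq:Av_t'_range} and \eref{ineq:Av_t'_source} are obtained exactly the same way, dividing \eref{ineq:lipschitz_w_range} and \eref{ineq:lipschitz_w_source} by $t-s$ and passing to the limit.

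For the final global claim, specialize \eref{ineq:lipschitz_w_source} to $\expd=2$: the exponent $(\expd-2)/(\expd-1)$ vanishes, so the denominator $t^{(\expd-2)/(\expd-1)}$ becomes $1$, and the bound reads $\|\fwd u_t-\fwd u_s\|_\H\le C_S(t-s)$ uniformly in $s,t>0$. This extends by continuity (Corollary~\ref{cor:cont_res}, noting $R$ is continuous at $t=0$ in this regime) to $s=0$, where $\fwd u_0=f$ is the minimizer corresponding to zero regularization, yielding global Lipschitz continuity on $[0,\infty)$.

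I do not expect a genuine obstacle here, as Lemma~\ref{lem:lipschitz} does the heavy lifting; the only point that requires a little care is justifying the pointwise derivative bound, namely that a Lipschitz map into $\H$ is a.e.\ differentiable in the strong sense. This is where the Hilbert (or more generally RNP) structure of the target space is essential — in a general Banach space one would only get weak$^*$ differentiability, which would not suffice for the norm estimate on $(\fwd u_t)'$.
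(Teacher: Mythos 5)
Your proposal is correct and follows essentially the same route as the paper: Lipschitz continuity on $(\delta,\infty)$ from estimate~\eref{ineq:lipschitz}, almost everywhere differentiability via the Radon--Nikod\'ym property of $\H$ and a Banach-space Rademacher theorem, the pointwise bounds by dividing the finite-difference estimates by $t-s$ and passing to the limit, and global Lipschitz continuity for $\expd=2$ from the vanishing exponent in~\eref{ineq:lipschitz_w_source}. The paper's proof states these steps as ``direct consequences''; you have merely filled in the same details, including the correct observation that no continuity of $R$ is needed for the pointwise derivative bound since~\eref{ineq:lipschitz} involves only $R(t)$.
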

\begin{proof}
Lipschitz continuity of~$t\mapsto\fwd u_t$ is a direct consequence of estimate~\eref{ineq:lipschitz}. Since~$\H$, being a Hilbert space, has the Radon-Nikodym property (cf.~\cite{stegall1975radon}, for instance), we can deduce from a generalization of Rademacher's theorem~\cite{aronszajn1976differentiability,kirchheim1994rectifiable} that~$(\fwd u_t)'$ exists almost everywhere on~$(0,\infty)$. Estimates~\eref{ineq:Av_t'},~\eref{ineq:Av_t'_range}, and \eref{ineq:Av_t'_source} are direct consequences of~\eref{ineq:lipschitz},~\eref{ineq:lipschitz_w_range}, and~\eref{ineq:lipschitz_w_source}. Global Lipschitz continuity on the whole real line for~$\expd=2$ and \eref{eq:source_cond} follows from~\eref{ineq:lipschitz_w_source}.
\end{proof}

\begin{cor}\label{cor:R_J_Lipschitz}
Let~$\expd\geq2$ and~$\expr\geq1$. Then the maps~$t\mapsto R(t)$ and~$t\mapsto J(t)$ are Lipschitz continuous on~$(\delta,\infty)$ for all $\delta>0$.
\end{cor}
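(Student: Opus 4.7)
The plan is to prove Lipschitz continuity of $R$ and $J$ separately, relying on Theorem~\ref{thm:dv} as the key input. For $R$ the argument is immediate: the reverse triangle inequality yields
\begin{align*}
|R(t)-R(s)|\leq\|\fwd u_t-\fwd u_s\|_\H,
\end{align*}
and integrating the pointwise bound~\eref{ineq:Av_t'} between $s$ and $t$ shows that the right-hand side is at most $(\|f\|_\H/\delta)\,|t-s|$ on $(\delta,\infty)$.

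For $J$ I would first establish Lipschitz continuity of $J^\expr$ and then, in the case $\expr>1$, upgrade it to Lipschitz continuity of $J$. Comparing the two minimization inequalities $E_t^{\expd,\expr}(u_t;f)\leq E_t^{\expd,\expr}(u_s;f)$ and $E_s^{\expd,\expr}(u_s;f)\leq E_s^{\expd,\expr}(u_t;f)$ and rearranging gives the sandwich
\begin{align*}
\frac{s}{\expr}\bigl(J(s)^\expr-J(t)^\expr\bigr)\leq\frac{1}{\expd}\bigl(R(t)^\expd-R(s)^\expd\bigr)\leq\frac{t}{\expr}\bigl(J(s)^\expr-J(t)^\expr\bigr)
\end{align*}
for $s<t$. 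The first part of the proof, together with the a priori bound $R\leq\|f\|_\H$, shows that $R^\expd$ is Lipschitz on $(\delta,\infty)$, so dividing the left inequality by $s/\expr\geq\delta/\expr$ yields Lipschitz continuity of $J^\expr$ on $(\delta,\infty)$. For $\expr=1$ this is already exactly what we need.

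For $\expr>1$ the transfer from $J^\expr$ to $J$ is the delicate step I expect to be the main obstacle, since the elementary estimate $J(s)-J(t)\leq\bigl(J(s)^\expr-J(t)^\expr\bigr)/\bigl(\expr J(t)^{\expr-1}\bigr)$ requires a positive lower bound on $J$, which is not available in general because $J(t)\to 0$ as $t\to\infty$. I would get around this via the envelope identity $\psi'(t)=J(t)^\expr/\expr$ for the concave value function $\psi(t)\defi\inf_u E_t^{\expd,\expr}(u;f)$: differentiating the explicit expression $\psi(t)=R(t)^\expd/\expd+tJ(t)^\expr/\expr$ and equating with the envelope derivative produces the almost-everywhere identity $tJ(t)^{\expr-1}J'(t)=-R(t)^{\expd-1}R'(t)$. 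Combined with the sharp pointwise bound $|R'(t)|\leq R(t)/t$ from~\eref{ineq:Av_t'} and the decay estimate $tJ(t)^\expr\leq(\expr/\expd)\|f\|_\H^\expd$ obtained by comparing $u_t$ to $0$, this should yield a uniform bound on $|J'(t)|$ over $(\delta,\infty)$ and hence the desired Lipschitz estimate.
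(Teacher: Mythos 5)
Your treatment of $R$, and of $J^\expr$, is correct and is essentially the paper's own argument: the reverse triangle inequality reduces $R$ to the Lipschitz bound \eref{ineq:lipschitz} on the forward path, and the energy comparison $E_s^{\expd,\expr}(u_s;f)\leq E_s^{\expd,\expr}(u_t;f)$ gives $J(s)^\expr-J(t)^\expr\leq\frac{\expr}{s\expd}\bigl(R(t)^\expd-R(s)^\expd\bigr)$, so $J^\expr$ is Lipschitz on $(\delta,\infty)$ and the case $\expr=1$ is finished. You are also right to single out the passage from $J^\expr$ to $J$ for $\expr>1$ as the delicate point: the paper dismisses it by saying that the $\expr$-th root ``preserves local Lipschitz continuity away from zero'', which is exactly as fragile as you suspect, since $J(t)\to0$ as $t\to\infty$ and $x\mapsto x^{1/\expr}$ is not Lipschitz near the origin.

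Your proposed repair, however, does not close. The envelope identity $tJ(t)^{\expr-1}J'(t)=-R(t)^{\expd-1}R'(t)$ is fine (both maps are monotone, hence differentiable a.e., and the value function is concave in $t$), but it gives $|J'(t)|=R(t)^{\expd-1}|R'(t)|/\bigl(tJ(t)^{\expr-1}\bigr)$, and both auxiliary estimates you invoke point the wrong way: $|R'(t)|\leq R(t)/t$ only controls the numerator, while $tJ(t)^\expr\leq(\expr/\expd)\norm{f}_\H^\expd$ is an \emph{upper} bound on $J(t)$ and hence a \emph{lower} bound on the factor $1/J(t)^{\expr-1}$, which is precisely the quantity that threatens to blow up as $t\to\infty$. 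What you need is either a lower bound on $J(t)^{\expr-1}$ or an upper bound on $R'(t)$ of order $J(t)^{\expr-1}$, and neither is supplied. Such a bound can be extracted from \eref{ineq:conti}: with $\tilde t\defi tJ(t)^{\expr-1}R(t)^{2-\expd}$ that estimate yields $R'(t)/R(t)\leq\tilde t\,'(t)/\tilde t(t)$ a.e., which couples $R'$ to $J'/J$; substituting this into the envelope identity and solving for $|J'|$ absorbs the dangerous term and gives, for instance for $\expd=2$, the clean bound $|J'(t)|\leq J(t)/\bigl((\expr-1)t\bigr)$, which suffices. As written, though, your argument for $\expr>1$ has a genuine gap.
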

\begin{proof}
The first assertion is an immediate consequence of the reverse triangle inequality:
$$|R(t)-R(s)|=|\norm{\fwd u_t-f}_\H-\norm{\fwd u_s-f}_\H|\leq\norm{\fwd u_t-\fwd u_s}_\H.$$
Since by estimate~\eref{ineq:lipschitz} the forward solution path is Lipschitz, the same holds for $R$. For the second claim, let~$0<s<t$ and let~$u_s$ and~$u_t$ denote corresponding minimizers. Thus, it holds
$$\frac{\expr}{s\expd}R(s)^\expd+J(s)^\expr\leq\frac{\expr}{s\expd}R(t)^\expd+J(t)^\expr$$
from which we deduce
\begin{align*}
|J(s)^\expr-J(t)^\expr|=J(s)^\expr-J(t)^\expr\leq\frac{\expr}{s\expd}\left(R(t)^\expd-R(s)^\expd\right).
\end{align*}
Since~$R(\cdot)$ is Lipschitz on $(\delta,\infty)$ for all $\delta>0$, the same holds for~$R(\cdot)^\expd$ and for~$t\mapsto J(t)^\expr$. Applying the~$\expr$-th root, preserves local Lipschitz continuity away from zero and hence we can conclude.
\end{proof}

In order to proceed to the case~$1\leq\expd<2$, where $\expd=1$ requires $\expr>1$, we use the relation between the different formulations established in \sref{sec:relation}. For simplicity, we will only consider the case $1<\expd<2$ and $\expr=1$. Defining~$\tau\mapsto T(\tau)=\tau R(\tau)^{\expd-2}$ as in Theorem~\ref{thm:relation}, one observes that, due to Corollary~\ref{cor:R_J_Lipschitz}, function~$T$ is Lipschitz continuous on $(\delta,\infty)$ for all $\delta>0$. Hence, its derivative~$T'$ exists almost everywhere in~$(\delta,\infty)$ and it holds
\begin{align}
T'(\tau)=\frac{\d}{\d\tau}\tau R(\tau)^{\expd-2}=\tau(\expd-2)R(\tau)^{\expd-4}{\langle\fwd v_\tau-f,(\fwd v_\tau)'\rangle}+R(\tau)^{\expd-2}.
\end{align}
Here, we used that also~$R'(\tau)$ exists almost everywhere according to Corollary~\ref{cor:R_J_Lipschitz}, and can be computed with the chain rule: $R'(\tau)={\langle\fwd v_\tau-f,(\fwd v_\tau)'\rangle}/{R(\tau)}$. Thus,~$T'$ is positive if and only if $\tau(2-\expd)R(\tau)^{-2}{\langle\fwd v_\tau-f,(\fwd v_\tau)'\rangle}<1.$ For~$1<\expd<2$, this inequality is true due to Cauchy-Schwarz and estimate~\eref{ineq:Av_t'} which can be used to bound $(\fwd v_\tau)'$. Hence, in that case also~$S$, the inverse of $T$, is a Lipschitz function. Consequently, we obtain Lipschitz continuity for minimizers~$u_t$ of~$E_t^{\expd,1}(\cdot;f)$ with~$1<\expd<2$ since~$\fwd u_t=\fwd v_{S(t)}$ is a composition of Lipschitz functions. By setting $T(\tau)=\tau R(\tau)^{\expd-2}J(\tau)^{1-\expr}$ this argument can easily be repeated for~$\expr>1$, which makes the calculations more cumbersome but leads to the same results. In this case, also $\expd=1$ and $\expr>1$ yields the desired Lipschitz continuity. Hence, the assumption~$\expd\geq2$ in Corollary~\ref{cor:R_J_Lipschitz} and Theorem~\ref{thm:dv} can be relaxed to~$\expd>1$ or $\expr>1$ without loosing Lipschitz continuity or differentiability of the forward solution path. However, estimates~\eref{ineq:Av_t'} and~\eref{ineq:Av_t'_source} need to be adapted. To keep the presentation short, we only formulate the estimates for $\expr=1$.

\begin{thm}[Lipschitz continuity of the forward solution path II]\label{thm:dv_p<1}
Let~$1\leq \expd<2$ and $\expr\geq 1$ such that $\expd>1$ or $\expr>1$. The forward solution path~$t\mapsto\fwd u_t$ is Lipschitz continuous on~$(\delta,\infty)$ for all $\delta>0$. Furthermore,~$(\fwd u_t)'$ exists almost everywhere in~$(\delta,\infty)$ and it holds for almost all~$t\in (0,\infty)$, $1<\expd<2$, and $\expr=1$
\begin{equation}\label{ineq:Av_t'_gen}
\norm{(\fwd u_t)'}_\H\leq\frac{1}{\expd-1}\frac{R(t)}{t}\leq\frac{1}{\expd-1} \frac{\norm{f}_\H}{t}.
\end{equation}
\minrev{This estimate can be improved to
\begin{align}\label{ineq:Av_t'_range_gen}
\norm{(\fwd u_t)'}_\H&\leq \frac{1}{\expd-1}C_R t^\frac{1-\expd}{\expd},\qquad\text{if }\eref{eq:range_cond}\text{ holds},\\
\label{ineq:Av_t'_source_gen}
\norm{(\fwd u_t)'}_\H&\leq \frac{1}{\expd-1}C_S t^\frac{2-\expd}{\expd-1},\qquad\text{if also }\eref{eq:source_cond}\text{ holds},
\end{align}
respectively}, for almost every~$t\in (0,\infty)$. 
\end{thm}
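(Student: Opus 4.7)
The plan is to transfer the regularity already established in Theorem~\ref{thm:dv} for the exponent-2 problem back to the range $1 \leq \alpha < 2$ via the equivalence of \sref{sec:relation}. The paragraph preceding the theorem statement already sketches this strategy; a full proof essentially amounts to making that sketch rigorous and then unwrapping the derivative bounds through the chain rule.

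\textbf{Step 1: reparametrization.} For the sub-case $\beta = 1$, Lemmas~\ref{lem:u_t-to-v_tau}-\ref{lem:v_tau-to-u_t} identify minimizers of $E_t^{\alpha,1}(\cdot;f)$ with minimizers of $E_\tau^{2,1}(\cdot;f)$ through the map $T(\tau) \defi \tau R(\tau)^{\alpha-2}$ and its inverse $S$. I would first verify that $T$ is locally Lipschitz on every $(\delta,\infty)$: Corollary~\ref{cor:R_J_Lipschitz} applied to $E_\tau^{2,1}(\cdot;f)$ gives Lipschitz continuity of $\tau\mapsto R(\tau)$ (bounded away from zero for $\tau > t_*$), and $T$ is a smooth composition of these. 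For $\beta > 1$ one replaces $T$ by $T(\tau) = \tau R(\tau)^{\alpha-2} J(\tau)^{1-\beta}$ arising from the analogous equivalence between $E_t^{\alpha,\beta}$ and $E_\tau^{2,\beta}$; the same corollary ensures all ingredients remain Lipschitz.

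\textbf{Step 2: positivity of $T'$, i.e.\ bi-Lipschitz regularity.} This is where the restriction $\alpha > 1$ (or $\beta > 1$) enters quantitatively and where I expect the main obstacle to lie. Differentiating $T$ and using $R'(\tau) = \langle \fwd v_\tau - f, (\fwd v_\tau)'\rangle / R(\tau)$ yields
\begin{equation*}
T'(\tau) = R(\tau)^{\alpha-2} + (\alpha-2)\tau R(\tau)^{\alpha-4}\langle\fwd v_\tau - f, (\fwd v_\tau)'\rangle.
\end{equation*}
Cauchy-Schwarz together with the estimate $\norm{(\fwd v_\tau)'}_\H \leq R(\tau)/\tau$ from Theorem~\ref{thm:dv} applied to $E_\tau^{2,1}(\cdot;f)$ bounds the correction term by $(2-\alpha)R(\tau)^{\alpha-2}$, so
\begin{equation*}
T'(\tau) \;\geq\; (\alpha-1)\,R(\tau)^{\alpha-2} \;>\; 0.
\end{equation*}
For $\beta > 1$ the derivative of $T$ carries an additional $J(\tau)^{1-\beta}J'(\tau)$ contribution; controlling it requires the analogous Lipschitz bound for the $E_\tau^{2,\beta}$-forward solution path, again supplied by Theorem~\ref{thm:dv}, but the bookkeeping is more tedious. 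Once $T' > 0$ is established, $S = T^{-1}$ is locally Lipschitz with $S'(t) \leq (\alpha-1)^{-1} R(S(t))^{2-\alpha}$.

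\textbf{Step 3: derivative estimates.} Since $\fwd u_t = \fwd v_{S(t)}$, the chain rule combined with estimate~\eref{ineq:Av_t'} from Theorem~\ref{thm:dv} and the relation $t = S(t) R(S(t))^{\alpha-2}$ gives, for $1 < \alpha < 2$ and $\beta = 1$,
\begin{equation*}
\norm{(\fwd u_t)'}_\H \;\leq\; \frac{R(S(t))}{S(t)}\cdot\frac{R(S(t))^{2-\alpha}}{\alpha-1} \;=\; \frac{1}{\alpha-1}\cdot\frac{R(t)}{t},
\end{equation*}
which is \eref{ineq:Av_t'_gen}. The refined bounds \eref{ineq:Av_t'_range_gen} and \eref{ineq:Av_t'_source_gen} follow immediately by substituting the H\"older estimates \eref{ineq:hoelder} and \eref{ineq:hoelder_w_source} from Lemma~\ref{lem:hoelder} for $R(t)$. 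The remaining assertions about a.e.\ differentiability are, as in Theorem~\ref{thm:dv}, a consequence of Rademacher's theorem in Hilbert spaces. The main work is confined to Step~2; the rest is bookkeeping that carries over verbatim from the $\alpha \geq 2$ case.
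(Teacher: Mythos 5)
Your proposal is correct and follows essentially the same route as the paper: reparametrize via $T(\tau)=\tau R(\tau)^{\expd-2}$ and its inverse $S$, use the Lipschitz bound $\norm{(\fwd v_\tau)'}_\H\leq R(\tau)/\tau$ from the quadratic case, and apply the chain rule together with Lemma~\ref{lem:hoelder} for the refined bounds. The only (cosmetic) difference is that you bound $T'$ from below and invert, whereas the paper computes $S'$ directly and resolves the resulting self-referential inequality $\norm{(\fwd u_t)'}_\H\leq(2-\expd)\norm{(\fwd u_t)'}_\H+R(t)/t$ by reordering; both yield the same constant $1/(\expd-1)$.
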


\begin{proof}
For simplicity, we only consider the case~$\expr=1$. It remains to prove the bound~\eref{ineq:Av_t'_gen}. To this end, we let~$u_t$ denote a minimizer of~$E_t^{\expd,1}(\cdot;f)$. Then it holds according to the previous results that~$u_t=v_\tau$ with~$\tau=S(t)$ and with the chain rule together with~\eref{ineq:Av_t'} we obtain
$$\norm{(\fwd u_t)'}_\H\leq\norm{(\fwd v_{\tau})'}_\H|S'(t)|\leq\frac{R(\tau)}{\tau}|S'(t)|.$$
Now from~$S(t)=tR(t)^{2-\expd}$ we find that 
\begin{align*}
|S'(t)|&=S'(t)=t(2-\expd)R(t)^{-\expd}\langle\fwd u_t-f,(\fwd u_t)'\rangle+R(t)^{2-\expd}\\
&=R(t)^{-\expd}\left[t(2-\expd)\langle\fwd u_t-f,(\fwd u_t)'\rangle+R(t)^2\right].
\end{align*}
Consequently, if we use $R(\tau)=\norm{\fwd v_\tau-f}_\H=\norm{\fwd u_t-f}_\H=R(t)$, the definition of~$S$, and $\tau=S(t)$ we infer
\begin{align*}
\norm{(\fwd u_t)'}_\H&\leq\frac{R(t)}{tR(t)^{2-\expd}}|S'(t)|=\frac{1}{tR(t)}\left[t(2-\expd)\langle\fwd u_t-f,(\fwd u_t)'\rangle+R(t)^2\right]\\
&\leq(2-\expd)\norm{(\fwd u_t)'}_\H+\frac{R(t)}{t}.
\end{align*}
Reordering yields the first inequality in~\eref{ineq:Av_t'_gen}, from where on we proceed as before.
\end{proof}

\subsection{Bounded variation of the forward solution path for $\expd =\expr=1$}

Using the equivalence of the problems together with the Lipschitz regularity of minimizers of the quadratic problem one can at least show that the forward solution path~$t\mapsto\fwd u_t$ for~$\expd=\expr=1$, which is well-defined almost everywhere according to Corollary~\ref{cor:aae-uniqueness}, has \emph{bounded variation}.

\begin{prop}\label{prop:BV}
The solution path~$t\mapsto \fwd u_t$ where~$u_t$ is the minimizer of~$E^{1,1}_t(\cdot;f)$ is of bounded variation on~$(\delta,\infty)$ for all $\delta>0$ and on $[0,\infty)$ if $t_*>0$ holds. Furthermore, the jump part of the measure~$(\fwd u_t)'$ is supported in $[t_*,t_{**}]$.
\end{prop}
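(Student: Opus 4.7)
The plan is to transfer the claim to the strongly convex problem $E^{2,1}_\tau(\cdot;f)$, whose forward solution path $\gamma(\tau)\defi\fwd v_\tau$ is Lipschitz on every interval $(\delta',\infty)$ by Theorem~\ref{thm:dv} and constantly equal to $\fwd\calP^\fwd(f)$ for $\tau\geq\tau_{**}$ by Theorem~\ref{thm:extinction}. The bridge between the two formulations is the reparametrization $T(\tau)=\tau/R(\tau)$ from Theorem~\ref{thm:relation}, which is continuous, non-decreasing, and surjective from $(0,\infty)$ onto $(0,\infty)$.

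First I would introduce the left-continuous, non-decreasing selection $\tau(t)\defi\inf\{\tau>0\st T(\tau)\geq t\}$. Continuity and surjectivity of $T$ yield $T(\tau(t))=t$ and $0<\tau(t)<\infty$ for every $t>0$, and Lemma~\ref{lem:v_tau-to-u_t} ensures that $v_{\tau(t)}$ is a minimizer of $E^{1,1}_t(\cdot;f)$. Combined with Corollary~\ref{cor:aae-uniqueness}, this identifies $\fwd u_t=\gamma(\tau(t))$ for almost every $t>0$, so that $\gamma\circ\tau$ provides the explicit BV representative we analyze.

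Next, for a partition $\delta\leq t_0<\cdots<t_N$ of $(\delta,\infty)$ and with $L$ denoting a Lipschitz constant of $\gamma$ on $[\tau(\delta),\infty)$ (finite by Theorem~\ref{thm:dv} together with the fact that $\gamma$ is constant for $\tau\geq\tau_{**}$), the monotonicity of $\tau(\cdot)$ produces the telescoping bound
\begin{equation*}
\sum_{i=1}^N\norm{\gamma(\tau(t_i))-\gamma(\tau(t_{i-1}))}_\H\leq L\sum_{i=1}^N\bigl(\tau(t_i)-\tau(t_{i-1})\bigr)=L\bigl(\tau(t_N)-\tau(t_0)\bigr)\leq L\tau_{**},
\end{equation*}
which is uniform in the partition, so $t\mapsto\fwd u_t$ is of bounded variation on $(\delta,\infty)$. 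If $t_*>0$, Theorem~\ref{thm:fsolution} forces $\fwd u_t=f$ on $(0,t_*]$, so the path is constant near zero and the bound extends to all of $[0,\infty)$.

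Finally, since $\gamma$ is Lipschitz continuous, any jump of $t\mapsto\gamma(\tau(t))$ must arise from a jump of $\tau(\cdot)$, which occurs precisely where $T$ is constant on a nondegenerate interval $[\tau_0,\tau_1]$. By Theorem~\ref{thm:non-unique-affine} such a flat portion of $T$ is in bijection with non-uniqueness of the forward solution of $E^{1,1}_{t_0}(\cdot;f)$ at the common value $t_0=T(\tau_0)=T(\tau_1)$, and by \sref{sec:small_time} together with Theorem~\ref{thm:extinction} uniqueness holds strictly outside $[t_*,t_{**}]$ (with $\fwd u_t=f$ for $t\leq t_*$ and $\fwd u_t=\fwd\calP^\fwd(f)$ for $t>t_{**}$). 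Hence the jump part of $(\fwd u_t)'$ is supported in $[t_*,t_{**}]$. The main technical subtlety is the measurability of the selection $\tau(\cdot)$ and the control of the Lipschitz constant of $\gamma$ near the lower endpoint; both follow cleanly from the monotonicity and continuity of $T$ combined with the finite extinction time of the quadratic problem.
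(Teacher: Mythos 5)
Your argument is correct and follows essentially the same route as the paper: transfer to the quadratic problem via the monotone reparametrization $T$, exploit the Lipschitz continuity and finite extinction time of $\tau\mapsto\fwd v_\tau$, and telescope over partitions using monotonicity of the time change, with the jump support following from constancy of the path outside $[t_*,t_{**}]$. The only slip is the final bound $\tau(t_N)-\tau(t_0)\leq\tau_{**}$, which fails for partitions reaching beyond $t_{**}$ (where $T$ keeps increasing while $\fwd v_\tau$ is already constant); this is repaired by restricting to partitions of $(\delta,t_{**}]$ -- exactly as the paper restricts to $(t_*,t_{**})$ -- since the forward solution path has zero variation outside that interval.
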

\begin{proof}
First, we notice that $\fwd u_t$ is well-defined for almost every $t>0$ according to Remark~\ref{cor:aae-uniqueness}.

Let us first assume that~$t_*>0$, i.e, conditions~\eref{eq:range_cond} and~\eref{eq:source_cond} hold. We already know that~$\fwd u_t$ has zero variation on~$(0,t_*)$ and~$(t_{**},\infty)$. Hence, it is enough to assert finite variation on the interval~$(t_*,t_{**})$. To this end, let~$t_*=t_1<t_2<\dots<t_{n-1}<t_n=t_{**}$ be a finite partition of the interval~$(t_*,t_{**})$. By Theorem~\ref{thm:relation}, we can choose numbers~$0\leq\tau_1<\dots<\tau_n$ such that~$\fwd u_{t_k}=\fwd v_{\tau_k}$ for all~$k=1,\dots,n$. Here,~$\tau_n=\tau_{**}$ is given by the finite extinction time of minimizers of~$E^{2,1}_\tau(\cdot;f)$. Furthermore,  using~\eref{ineq:lipschitz_w_source} with~$\expd=2$ we compute
\begin{align*}
\sum_{k=1}^{n-1}\norm{\fwd u_{t_{k+1}}-\fwd u_{t_k}}_\H&=\sum_{k=1}^{n-1}\norm{\fwd v_{\tau_{k+1}}-\fwd v_{\tau_k}}_\H\leq C_S\sum_{k=1}^{n-1}(\tau_{k+1}-\tau_k)
\leq C_S\tau_{**}<\infty.
\end{align*}
Forming the supremum over all partions of~$(t_*,t_{**})$ shows that~$\fwd u_t$ has bounded variation. If $t_*=0$ one uses \eref{ineq:lipschitz} to deduce the weaker result. Consequently, the finite Radon measure~$(\fwd u_t)'$ can be decomposed into an absolutely continuous part, a jump part, and a Cantor part (see~\cite{ambrosio2000functions} for precise definitions), where the jump part is supported in~$[t_*,t_{**}]$ since~$\fwd u_t$ is constant outside this interval.
\end{proof}

Once more, we obtain statements concerning the subgradient and the solution path.

\begin{cor}
Under the conditions of Theorem~\ref{thm:dv_p<1} or Proposition~\ref{prop:BV}, respectively, it holds:
\begin{enumerate}
\item The map~$t\mapsto p_t$, where~$p_t$ is given by the optimality conditions~\eref{eq:opt_conf_f} and~\eref{eq:opt_cond_u}, has the same regularity as the forward solution path.
\item If~$\fwd$ is bounded from below, meaning that there is $c>0$ such that $c\norm{u}_\X\leq\norm{\fwd u}_\H,\;\forall u\in\X$, then the solution path~$t\mapsto u_t$ has the same regularity as the forward solution path.
\end{enumerate}
\end{cor}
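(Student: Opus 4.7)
For part (i), the plan is to exploit the explicit formula in the optimality condition \eref{eq:opt_cond_u} that expresses the subgradient as
$$p_t = \frac{\fwd^*(f - \fwd u_t)}{t\, R(t)^{2-\expd}\,J(t)^{\expr-1}}$$
on the range $t\in(t_*,t_{**})$ where $\fwd u_t \neq f$ and $J(u_t)\neq 0$. The numerator is the image under the bounded operator $\fwd^*$ of the forward solution path (up to a constant shift), so it inherits its regularity directly---that is, Lipschitz continuity on $(\delta,\infty)$ in the setting of Theorems~\ref{thm:dv} and~\ref{thm:dv_p<1}, and bounded variation in the setting of Proposition~\ref{prop:BV}. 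For the denominator, the strategy is to combine Corollary~\ref{cor:R_J_Lipschitz} (Lipschitz continuity of $t\mapsto R(t)$ and $t\mapsto J(t)$ on $(\delta,\infty)$) with the strict monotonicity of $R$ and $J$ implied by Theorem~\ref{thm:uniqueness_res} and Lemma~\ref{lem:monot_res}. These ensure that for any compact subinterval of $(t_*,t_{**})$ both $R(t)$ and $J(t)$ are bounded away from zero, so the denominator is a product of locally Lipschitz (or BV) factors bounded away from zero. Taking reciprocals and multiplying then preserves the regularity, and the claimed inheritance follows.

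Two degenerate regimes need separate treatment. On $(0,t_*)$ (which only occurs for $\expd=1$ under \eref{eq:range_cond}, \eref{eq:source_cond}), the forward solution is constant $\fwd u_t \equiv f$ and \eref{eq:opt_conf_f} fixes $p_t = -\fwd^* q/(t J_{\min}^{\expr-1})$ with a source element $q$ independent of $t$, hence smooth in $t$ and locally Lipschitz on $(\delta,t_*)$. On $(t_{**},\infty)$ (which only occurs for $\expr=1$), the solution freezes at $u_t = \calP^\fwd(f)$ so $\fwd u_t$ and $R(t)$ are constant, and the expression for $p_t$ reduces to $1/t$ times a fixed element of $\dualX$, which is again locally Lipschitz. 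The main technical obstacle is precisely controlling the denominator near the critical times $t_*$ and $t_{**}$, and the statement tacitly restricts to intervals on which this control is available; this is why the inheritance is phrased qualitatively as ``the same regularity as the forward solution path''.

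For part (ii), the approach is considerably shorter: the bound from below $c\norm{u}_\X \leq \norm{\fwd u}_\H$ applied to the difference $u_t - u_s$ of two minimizers yields the reverse estimate
$$c\,\norm{u_t - u_s}_\X \leq \norm{\fwd u_t - \fwd u_s}_\H.$$
Consequently, every Lipschitz estimate on $t\mapsto\fwd u_t$ transfers to $t\mapsto u_t$ with constant scaled by $1/c$, and every partition sum defining the total variation of $t\mapsto\fwd u_t$ dominates the corresponding partition sum for $t\mapsto u_t$ by the same factor, yielding the BV statement. Note that the lower bound forces $\fwd$ to be injective, so the solution path is itself uniquely defined by Corollary~\ref{cor:unique_sol_path}(ii) in the non-degenerate range $\expd>1$ or $\expr>1$, and almost everywhere uniquely defined by Corollary~\ref{cor:aae-uniqueness} in the degenerate case.
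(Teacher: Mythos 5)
The paper states this corollary without proof, so there is nothing to compare against; your argument supplies exactly the natural justification. Part (ii) is the intended one-line transfer via $c\norm{u_t-u_s}_\X\leq\norm{\fwd u_t-\fwd u_s}_\H$, and part (i) correctly reads off $p_t=\fwd^*(f-\fwd u_t)/\bigl(tR(t)^{2-\expd}J(t)^{\expr-1}\bigr)$ as a quotient of a path with the claimed regularity by a scalar factor that is itself Lipschitz (respectively BV) and, by the monotonicity of $R$ and $J$ together with the characterizations of $t_*$ and $t_{**}$, bounded away from zero on compact subintervals of $(t_*,t_{**})$, with the explicit constant formulas covering $(0,t_*)$ and $(t_{**},\infty)$. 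The only loose end is the one you already flag: for $\expr>1$ the factor $J(t)^{\expr-1}$ tends to zero as $t\to\infty$ while the numerator need not, so the inheritance is really a local statement on compact subintervals; this imprecision is in the corollary as stated, not in your argument.
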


\section{Nonlinear spectral representations}\label{sec:spectral}
\LB{In order to define a nonlinear spectral representation $\phi_t$ of some data~$f$ with respect to the functional~$E_t^{\expd,\expr}(\cdot;f)$, we draw our motivation from classical linear Fourier analysis and follow an axiomatic approach. Formally, the Fourier transform of a sine or cosine -- being eigenfunctions of the negative Laplacian -- is given by a delta distribution which is concentrated on the corresponding eigenvalue (or the frequency after a change of variables). Hence, also in the nonlinear setting eigenfunctions should give rise to atoms in the spectral representation. In addition, in analogy to the inverse Fourier transform, there should be an inverse transform, mapping a nonlinear spectral representation back to the data and allowing for spectral filtering.} 
\subsection{Solution path of generalized singular vectors}\label{sec:singular_vectors}
\LB{To find a nonlinear spectral representation with above noted properties we follow the approach of Gilboa, first brought up in \cite{gilboa2013spectral}}, and examine the solution path that corresponds to singular vectors (cf.~\cite{benning2013ground,benning2018modern}) of~$J$, i.e.,~$f=\fwd\dataX$ where~$\lambda \fwd^*\fwd\dataX\in\partial J(\dataX)$ for some~$\lambda>0$. For such data, one would like to have a delta-peak in the spectral representation to indicate that only one singular vector is contained in the data, that is,~$\phi_t=f\delta_{{1}/{\lambda}}(t)$, where~${1}/{\lambda}$ can be interpreted as a generalized frequency. 

The following proposition characterizes the solution paths of singular vectors with eigenvalue $\lambda>0$.

\begin{prop}\label{prop:singular_vectors}
Let~$\lambda>0$ and~$\dataX\in\H$ such that~$f=\fwd \dataX$ and~$\lambda \fwd^*\fwd\dataX\in\partial J(\dataX)$, i.e.,~$\dataX$ is a singular vector with singular value~$\lambda$. Letting $\mathbf{1}$ denote the indicator function (cf.~\eref{eq:ind_func}), a minimizer~$u_t$ of~$E_t^{\expd,1}(\cdot;f)$ is given by
\begin{align*}
u_t=
\begin{cases}
\mathbf{1}_{(0,(\lambda\norm{f}_\H)^{-1})}(t)\,\dataX,\quad&\expd=1,\\
(1-(t\lambda)^\frac{1}{\expd-1}\norm{f}_\H^\frac{2-\expd}{\expd-1})_+\dataX,\quad&\expd>1,
\end{cases}
\end{align*}
and a minimizer for~$E_t^{2,2}(\cdot;f)$ is given by~$u_t={1}/{(1+t\lambda^2)}\dataX$. The extinction times of these solution are given by~$(\lambda\norm{f}^{2-\expd}_\H)^{-1}$ for~$\expd\geq1$ and~$\infty$ for~$\expd,\expr=2$, respectively.
\end{prop}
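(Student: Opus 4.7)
The plan is to exploit the ray structure coming from absolute one-homogeneity: since $\lambda\fwd^*\fwd\dataX \in \partial J(\dataX)$ and $J$ is absolutely one-homogeneous, one has $\partial J(c\dataX) = \partial J(\dataX)$ and $J(c\dataX) = cJ(\dataX)$ for every $c > 0$, so it is natural to look for minimizers of the form $u_t = c_t\dataX$ with $c_t \in [0,1]$. Moreover, the standard fact that $\partial J(u) \subseteq \partial J(0)$ for absolutely one-homogeneous $J$ (which follows by comparing the subgradient inequality at $v=0$ and $v=2u$ to obtain $\langle p,u\rangle = J(u)$) combined with convexity of $\partial J(0)$ implies that every scalar multiple $s\lambda\fwd^*\fwd\dataX$ with $s \in [0,1]$ belongs to $\partial J(0)$. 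This will be the crucial tool for verifying optimality after extinction.

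First I would treat the case $\expd > 1$, $\expr = 1$. Substituting $u_t = c_t\dataX$ with $c_t \in (0,1)$ and the subgradient choice $p_t = \lambda\fwd^*\fwd\dataX$ into the optimality condition~\eref{eq:opt_cond_u} reduces it, after cancelling the common direction $\fwd^*\fwd\dataX$ and using $R(t) = (1-c_t)\|f\|_\H$, to a scalar equation of the form $(1-c_t)^{\expd-1} = t\lambda\|f\|_\H^{2-\expd}$ whose solution gives precisely the stated formula for $c_t$; the positivity constraint $c_t \geq 0$ determines the critical time $(\lambda\|f\|_\H^{2-\expd})^{-1}$. For $t$ beyond this threshold I would verify that $u_t = 0$ satisfies the optimality condition by checking that the required scalar multiple of $\lambda\fwd^*\fwd\dataX$ lies in $\partial J(0)$, which is exactly the convex-combination observation above and yields the scalar inequality that complements the previous regime. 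The case $\expd = 1$ is handled analogously with~\eref{eq:opt_conf_f} in place of~\eref{eq:opt_cond_u}: the ansatz $u_t = \dataX$ is tested by picking $q = -t\lambda f$ and checking $\|q\|_\H \leq 1$, producing the threshold $(\lambda\|f\|_\H)^{-1}$ consistent with the exponent $2-\expd = 1$.

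For $E_t^{2,2}$ I would use the same ansatz and subgradient. Because $\expr = 2$, the factor $J(u_t)^{\expr-1} = c_tJ(\dataX)$ now enters linearly, and~\eref{eq:opt_cond_u} collapses to a strictly positive linear equation in $c_t$ that admits a unique positive solution for every $t > 0$, reproducing the claimed explicit formula and ruling out any finite extinction time.

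The main obstacle is the post-extinction verification in the $\expr = 1$ case: one has to show that a particular rescaled source element is still a subgradient of $J$ at $0$. This is where absolute one-homogeneity of $J$ is indispensable, as it turns the subdifferential inclusion into a single scalar inequality whose validity regime is exactly the complement of the regime in which the non-trivial formula for $c_t$ applies, so that the two cases glue together seamlessly at the extinction time.
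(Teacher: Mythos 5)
Your proof is correct and follows essentially the same route as the paper's (which is only a two-line sketch): insert the ansatz $u_t=c(t)\dataX$ into the optimality conditions \eref{eq:opt_conf_f}/\eref{eq:opt_cond_u}, exploit positive zero-homogeneity of $\partial J$ along the ray through $\dataX$, and verify the zero solution after extinction via convexity of $\partial J(0)$. One small caveat: carrying out your linear equation for $E_t^{2,2}$ literally yields $c_t=1/(1+t\lambda^2\norm{f}_\H^2)$, which matches the stated formula only for normalized data $\norm{f}_\H=1$ --- a normalization the paper uses in its figure but omits from the proposition, so "reproducing the claimed explicit formula" should be read with that in mind.
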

\begin{proof}
In the case~$\expd=1$ one can easily check that~$t_*=t_{**}=1/(\lambda\norm{f}_\H)$ if~$f$ is a singular vector. The other minimizers can be obtained by inserting the ansatz $u_t=c(t)\dataX$ into the optimality condition~\eref{eq:opt_cond_u}.
\end{proof}

\Fref{fig:singular_vectors} shows the corresponding solution paths for a singular vector~$\dataX$ with singular value~$\lambda$ such that~$f=\fwd\dataX$ has unit norm and~$\expr=1$. In this case, all paths extinct in~$1/\lambda$. Hence, in order to obtain~$\phi_t=f\delta_{1/\lambda}(t)$, suitable spectral representations for~$\expr=1$ are~$\phi_t=-(\fwd u_t)'$ if~$\expd=1$ and~$\phi_t=t(\fwd u_t)''$ if~$\expd=2$. If~$\fwd$ is bounded from below such that the solution path $t\mapsto u_t$ has the same regularity as the forward solution path, one can even choose~$\phi_t=-u_t'$ or~$\phi_t=tu_t''$, respectively. For other~$\expd$'s an integer derivative does typically not produce a delta peak and one could consider fractional derivatives as done in \cite{cohen2018shape}. Note that by these definitions and due to the finite extinction time the reconstruction formula
\begin{align}\label{eq:reconstruction}
f = \int_0^\infty \phi_t\,dt+\fwd\calP^\fwd(f)
\end{align}
holds which can be used for {spectral filtering} by defining
\begin{align}\label{eq:filtering}
f_F\defi\int_0^\infty F(t)\phi_t\,dt+F(\infty)\fwd\calP^\fwd(f),
\end{align}
where~$F$ is a sufficiently well-behaved filter function (cf.~\cite{gilboa2014total}, for instance).
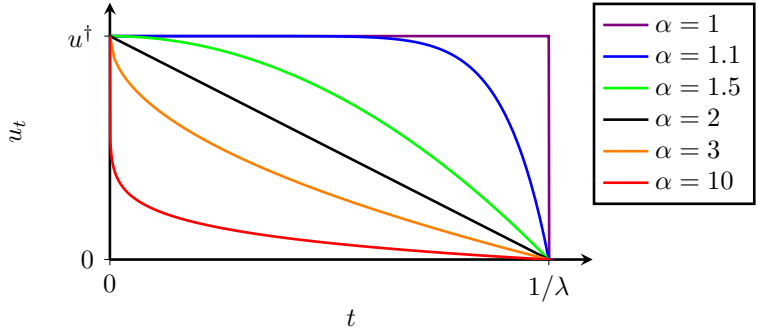
\begin{figure}[h!]
\centering
\begin{tikzpicture}[scale=1]
\begin{axis}[
	x tick style={color=black},
	y tick style={color=black},
	xmin = 0,
	xmax = 1.1,
	ymin = 0,
	ymax = 1.15,
	width = 8cm,
	height = 5cm,
    axis lines = left,
    xlabel = $t$,
    ylabel = {$u_t$},
    xtick={0,1},
    xticklabels={0,$1/\lambda$},
    ytick={0,1},
    yticklabels={0,$\dataX$},
    legend cell align={left},
    legend style={at={(1,1)},anchor=north west},
    line width = 1pt
]
\addplot [
    domain=0:1.01, 
    samples=1000, 
    color=violet,
]
{(x<=1)};
\addlegendentry{$\expd=1\phantom{.0}$}
\addplot [
    domain=0:1, 
    samples=100, 
    color=blue,
    ]
    {(1-(x)^10)};
\addlegendentry{$\expd=1.1$} 
\addplot [
    domain=0:1, 
    samples=100, 
    color=green,
]
{(1-x^2)};
\addlegendentry{$\expd=1.5$}
\addplot [
    domain=0:1, 
    samples=100, 
    color=black,
    ]
    {(1-x)};
\addlegendentry{$\expd=2\phantom{.0}$}
\addplot [
    domain=0:1, 
    samples=100, 
    color=orange,
    ]
    {(1-(x)^0.5)};
\addlegendentry{$\expd=3\phantom{.0}$} 
\addplot [
    domain=0:1, 
    samples=1000, 
    color=red,
    ]
    {(1-(x)^(1/9))};
\addlegendentry{$\expd=10$} 
\end{axis}
\end{tikzpicture}
\caption{Solution paths of normalized singular vectors for different values of $\expd$ and $\expr=1$\label{fig:singular_vectors}}
\end{figure}

\begin{rem}\label{rem:distribution}
Note that while~$\phi_t=-(\fwd u_t)'$ is a well-defined finite Radon measure according to Proposition~\ref{prop:BV}, whereas this is a-priori unclear for~$\phi_t=t(\fwd u_t)''$. However, due to the finite extinction time, this spectral representation can be defined in a distributional sense, via
\begin{align}
\phi_t(\psi)\defi-\int_0^\infty \langle(\fwd u_t)',(t\psi(t))'\rangle~dt,
\end{align}
where~$\psi:\R\to\H$ is a Fr\'{e}chet-differentiable test function with~$\psi(t)=0$ for all~$t$ in a neighborhood of~$0$. Owing to Theorem~\ref{thm:dv}, the second condition is not even necessary if one of the conditions~\eref{eq:range_cond} or~\eref{eq:source_cond} holds since in that case~$\norm{(\fwd u_t)'}_\H$ is integrable in zero.
\end{rem}

Proposition~\ref{prop:singular_vectors} also shows that, although all problems for~$\expd>1$ are equivalent, they significantly differ in terms of the spectral representations which can be obtained from their solution paths. Furthermore, since the minimizer for~$\expr=2$ smoothly depends on~$t$, no singular spectral representation can be achieved by computing time derivatives which is why we will restrict ourselves to the case~$\expr=1$ for the rest of the manuscript.

Another interesting consequence of Proposition~\ref{prop:singular_vectors} is that some of the models~$E_t^{\expd,1}(\cdot;f)$ are \emph{scale invariant} on eigenfunctions. To see this, we choose~$J=\tv$ as the total variation of functions on~$\R^n$,~$\X=\bv(\R^n)\cap L^2(\R^n)$,~$\H=L^2(\R^n)$, and~$\fwd$ the continuous embedding operator. It is well-known that eigenfunctions of~$\tv$ are given by indiactor functions of so called calibrable sets~$\Omega\subset\R^n$ with eigenvalue~$P(\Omega)/|\Omega|$ where~$P$ denotes the perimeter and~$|\cdot|$ is the~$n$-dimensional Lebesgue measure (cf.~\cite{bellettini2002total,alter2005characterization}). If~$f=\mathbf{1}_\Omega$ for calibrable~$\Omega$, we find that the extinction time of minimizers~$E_t^{\expd,1}(\cdot;f)$ is given by~$t_\mathrm{ext}(\Omega)={|\Omega|}^\frac{\expd}{2}/P(\Omega)$ for~$\expd\geq 1$. If one rescales~$\Omega_r=r\Omega$ with some~$r>0$, then~$\Omega_r$ is still calibrable and the extinction time changes to $t_\mathrm{ext}(\Omega_r)=r^\frac{n(\expd-2)+2}{2}t_ \mathrm{ext}(\Omega).$ Hence, we observe that for any dimension~$n\geq2$ there is~$\expd\defi2-2/n\in[1,2)$ such that~$t_ \mathrm{ext}(\Omega_r)=t_ \mathrm{ext}(\Omega)$ which makes the model \emph{scale invariant}. Note that in dimension~$n=2$, which is most relevant for imaging applications, the model~$E^{1,1}(\cdot;f)$ becomes both contrast and scale invariant.

\subsection{Spectral representations for~$\expd=1$ and~$\expd=2$}
From now on our setting will be a Gelfand-triple~$\X\hookrightarrow\H\hookrightarrow\dualX$ such that operator~$\fwd$ becomes a continuous embedding operator and will thus be omitted in our notation. In the absence of a forward operator one usually refers to singular vectors as \emph{eigenvectors}. Due to the observations in the previous section, we will only study the functionals~$F_\tau(\cdot;f)\defi E^{2,1}_\tau(\cdot;f)$ and~$E_t(\cdot;f)\defi E_t^{1,1}(\cdot;f)$ and fix our notation in such a way that the corresponding minimizers are denoted by~$v_\tau$ and~$u_t$, respectively. We consider the spectral representations given by~$\varphi_\tau\defi\tau v_\tau''$, which is to be understood in the distributional sense, and~$\phi_t\defi-u_t'$, the latter being a finite Radon measure according to Proposition~\ref{prop:BV}.

Next we formulate a theorem which is a generalization of Proposition~\ref{prop:singular_vectors} and deals with an important question concerning nonlinear spectral decompositions, namely with the decomposition of a linear combination of generalized eigenvectors. Two conditions that suffice for a perfect decomposition into eigenvectors are the (SUB0) condition and orthogonality of the eigenvectors, introduced in~\cite{schmidt2018inverse}. Here the authors showed that the inverse scale space flow is able to decompose the data perfectly into the eigenvectors. A similar statement holds true for the variational problem~$F_\tau(\cdot;f)$, in particular, the solution path $v_\tau$ will shrink each eigenvector linearly until disappearance and will, thus, be piecewise affine in $\tau$.

\minrev{For a more compact notation we will from now on abbreviate $K:=\partial J(0)$, which can be viewed as characteristic set of $J$ since it contains all subdifferentials and defines $J$ via duality (cf.~\eref{eq:subdiff} and \eref{eq:J_dual}).}

\begin{thm}[Linear combination of eigenvectors I]\label{thm:linear_combination}
Let~$f$ be the the linear combination of orthogonal eigenvectors, i.e.,~$f=\sum_{i=1}^n\gamma_i u_i$ where~$\gamma_i\neq 0$,~$\lambda_iu_i\in\partial J(u_i)$ with~$\lambda_i>0$, and~$\langle u_i,u_j\rangle=0$ for all~$i,j=1,\dots,n$,~$i\neq j$. Furthermore, we define~$p_k\defi\sum_{i=k}^n\sgn(\gamma_i)\lambda_iu_i$ and assume that 
\begin{align}\label{cond:sub0}\tag{SUB0}
p_k\in K,\quad k=1,\dots,n.
\end{align}
Additionally, we assume an ordering such that~$|\gamma_i|/\lambda_i<|\gamma_{i+1}|/\lambda_{i+1}$ holds for all~$i=1,\dots,n$. Then the minimizer~$v_\tau$ of~$F_\tau(\cdot;f)$ is given by 
\begin{align}\label{eq:quadr_min}
v_\tau=\sum_{i=k}^n\sgn(\gamma_i)(|\gamma_i|-\tau\lambda_i)u_i,\quad\tau_{k-1}<\tau\leq\tau_k,
\end{align}
where~$\tau_0\defi 0$,~$\tau_k\defi\gamma_k/\lambda_k$, and~$k=1,\dots,n$.
\end{thm}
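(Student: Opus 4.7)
Since the data fidelity in $F_\tau(\cdot;f)=\tfrac{1}{2}\Vert\cdot-f\Vert_\H^2+\tau J(\cdot)$ is strictly convex, the minimizer is unique and characterized via Theorem~\ref{thm:opt_cond} (with $\expd=2$, $\expr=1$) by the inclusion $(f-v_\tau)/\tau\in\partial J(v_\tau)$. Using that $J$ is absolutely one-homogeneous, I would first rewrite this as the pair of conditions
\begin{equation*}
\frac{f-v_\tau}{\tau}\in K\defi\partial J(0), \qquad \left\langle\frac{f-v_\tau}{\tau},v_\tau\right\rangle=J(v_\tau).
\end{equation*}
It then suffices to verify both conditions on each interval $(\tau_{k-1},\tau_k]$ for the proposed ansatz.

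Fixing $k\in\{1,\dots,n\}$ and $\tau\in(\tau_{k-1},\tau_k]$, I would first compute, using $\gamma_i=\sgn(\gamma_i)|\gamma_i|$,
\begin{equation*}
f-v_\tau=r_k+\tau p_k,\quad\text{where }r_k\defi\sum_{i=1}^{k-1}\gamma_i u_i.
\end{equation*}
The strict ordering $|\gamma_i|/\lambda_i<|\gamma_{i+1}|/\lambda_{i+1}$ yields $|\gamma_i|-\tau\lambda_i\geq 0$ for all $i\geq k$ on this interval, which also ensures that the ansatz is continuous across the breakpoints $\tau_k$.

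For the duality identity, I would invoke orthogonality of the $u_i$ to kill the contribution $\langle r_k,v_\tau\rangle=0$, so that only $\langle p_k,v_\tau\rangle$ remains. A direct expansion together with $J(u_i)=\langle\lambda_i u_i,u_i\rangle=\lambda_i\Vert u_i\Vert_\H^2$ gives
\begin{equation*}
\langle p_k,v_\tau\rangle=\sum_{i=k}^{n}(|\gamma_i|-\tau\lambda_i)J(u_i).
\end{equation*}
Subadditivity and one-homogeneity of $J$, combined with $|\gamma_i|-\tau\lambda_i\geq 0$, reproduce the same expression as an \emph{upper} bound on $J(v_\tau)$; condition (SUB0) yields $p_k\in K$ and hence the matching \emph{lower} bound $J(v_\tau)\geq\langle p_k,v_\tau\rangle$. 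Equality of the two bounds establishes the pairing identity.

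The main obstacle is the first condition $(r_k/\tau+p_k)\in K$. My plan is to realise this element as a convex combination of $p_1,\dots,p_k$, all of which lie in $K$ by (SUB0). Concretely, I would take
\begin{equation*}
c_j\defi\frac{\tau_j-\tau_{j-1}}{\tau}\quad(j=1,\dots,k-1),\qquad c_k\defi\frac{\tau-\tau_{k-1}}{\tau},
\end{equation*}
which are non-negative thanks to the strict ordering of the $\tau_j$'s and sum telescopically to $1$. Swapping the order of summation in $\sum_{j=1}^{k}c_j p_j=\sum_{j=1}^{k}c_j\sum_{i=j}^{n}\sgn(\gamma_i)\lambda_i u_i$ and identifying coefficients of each $u_i$ (so that $\sum_{j=1}^{i}c_j=\tau_i/\tau$ produces the prefactor $\gamma_i/\tau$ for $i<k$, while $\sum_{j=1}^{k}c_j=1$ produces $\sgn(\gamma_i)\lambda_i$ for $i\geq k$) recovers exactly $r_k/\tau+p_k$. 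Convexity of $K$ then finishes the first condition, and uniqueness of the minimizer of $F_\tau(\cdot;f)$ completes the proof.
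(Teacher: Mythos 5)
Your proposal is correct, and every step checks out: the reformulation of $(f-v_\tau)/\tau\in\partial J(v_\tau)$ via \eref{eq:subdiff} into the membership condition $r_k/\tau+p_k\in K$ plus the pairing identity, the sandwich $\langle p_k,v_\tau\rangle\leq J(v_\tau)\leq\sum_{i=k}^n(|\gamma_i|-\tau\lambda_i)J(u_i)=\langle p_k,v_\tau\rangle$ using \eref{cond:sub0} on one side and subadditivity with the sign condition $|\gamma_i|-\tau\lambda_i\geq 0$ on the other, and in particular the convex-combination weights $c_j$, which do telescope correctly so that the coefficient of $u_i$ becomes $\tau_i\sgn(\gamma_i)\lambda_i/\tau=\gamma_i/\tau$ for $i<k$ and $\sgn(\gamma_i)\lambda_i$ for $i\geq k$. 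The paper does not spell out a proof at all but merely refers to the analogous statement for the inverse scale space flow in the cited work of Schmidt et al.; the argument there is of the same type (guess the minimizer, exhibit the subgradient, use \eref{cond:sub0} and convexity of $K$ to certify it), so your route is essentially the intended one, just written out in full for the variational problem $F_\tau(\cdot;f)$. Two cosmetic points: you should say explicitly that uniqueness of $v_\tau$ comes from the Gelfand-triple setting of that section, where $\fwd$ is the injective embedding so that $v\mapsto\frac12\Vert v-f\Vert_\H^2$ is strictly convex on $\X$; and note that the paper's $\tau_k\defi\gamma_k/\lambda_k$ should be read as $|\gamma_k|/\lambda_k$, consistent with the ordering assumption, which is what you use.
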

\begin{proof}
The proof works along the same lines as the proof of \cite[Thm.~3.14]{schmidt2018inverse}.
\end{proof}

\begin{rem}
Note that it is straightforward to extend this result to data which is composed of generalized singular vectors, i.e.,~$f=\sum_{i=1}^n\gamma_i\fwd u_i$. To this end, one has to demand~$\fwd$-orthogonality~$\langle\fwd u_i,\fwd u_j\rangle=0$ for~$i\neq j$ and define~$p_k\defi \sum_{i=k}^n\sgn(\gamma_i)\lambda_i\fwd^*\fwd u_i$, instead. 
\end{rem}

\begin{rem}
It is no significant restriction in Theorem~\ref{thm:linear_combination} to assume that all~$|\gamma_i|/\lambda_i$ are different for~$i=1,\dots,n$. If this were not the case, the corresponding eigenvectors would simply shrink away simultaneously. However, in order to avoid unnecessarily complicated formulae, we refrained from considering this case.
\end{rem}

\begin{rem}[Action of proximal operators]
Theorem~\ref{thm:linear_combination} can be interpreted in such a way that if the data~$f$ can be written as a linear combination of orthogonal eigenvectors fulfilling~\eref{cond:sub0}, then the proximal operator~$\prox_{\tau J}(f)\defi\argmin_v\left\lbrace\frac{1}{2}\norm{v-f}^2+\tau J(v)\right\rbrace$ performs shrinkage on the eigendirections. This is in particular true for the~$\norm{\cdot}_1$-norm where the standard basis of~$\R^n$ constitutes a set of orthogonal eigenvectors fulfilling~\eref{cond:sub0}.
\end{rem}

\begin{example}
Let us illustrate the preceding remark for the proximal operator of the~$\infty$-norm in two dimensions. Let 
$$v_\tau\defi\mathrm{prox}_{\tau\|\cdot\|_\infty}(f)\defi\argmin\left\lbrace\frac{1}{2}\norm{v-f}^2+\tau\norm{v}_\infty\st v\in\R^2\right\rbrace$$
and~$K\defi\{v\in\R^2\st\norm{v}_1\leq1\}$ be the unit ball of the~$1$-norm. We observe that  $u_1=(1,1)^T/2$ and $u_2=(-1,1)^T/2$ constitute a basis of eigenvectors of~$\norm{\cdot}_\infty$ with eigenvalue~$1$. In particular, any~$f\in\R^2$ can be written as
$$f=(f_1,f_2)^T=(f_1+f_2)u_1+(f_2-f_1)u_2=:\gamma_1u_1+\gamma_2u_2.$$
Note that the~\eref{cond:sub0} condition is met since~$u_1,u_2,u_1+u_2\in K$ and the~$u_i$'s are orthogonal. If~$f$ is an eigenvector of~$\norm{\cdot}_\infty$, the analytic expression for~$\prox_{\tau\norm{\cdot}_\infty}(f)$ becomes trivial and, thus, we assume that~$\gamma_1,\gamma_2\neq 0$ and~$|\gamma_1|\neq|\gamma_2|$. This guarantees that~$f$ is no eigenvector. Furthermore, we reorder such that~$0<|\gamma_1|<|\gamma_2|$ holds. Hence, we find by~\eref{eq:quadr_min} that~$v_\tau$ is given by
$$
v_\tau=\begin{cases}
\mathrm{sgn}(\gamma_1)(|\gamma_1|-\tau)u_1+\mathrm{sgn}(\gamma_2)(|\gamma_2|-\tau)u_2,\quad &0\leq\tau\leq\tau_1\defi|\gamma_1|,\\
\mathrm{sgn}(\gamma_2)(|\gamma_2|-\tau)u_2,\quad&\tau_1<\tau\leq\tau_2\defi|\gamma_2|.
\end{cases}
$$
\end{example}

\begin{cor}[Linear combination of eigenvectors II]\label{cor:linear_combination}
Under the conditions of Theorem~\ref{thm:linear_combination} the minimizer~$u_t$ of~$E_t(\cdot;f)$ is~$u_t=v_{S(t)}$ where~$S$ is given by
\begin{align}\label{eq:S_linear_comb}
S(t)=\frac{t\sqrt{\sum_{i=1}^{k-1}\gamma_i^2\norm{u_i}_\H^2}}{\sqrt{1-t^2\norm{p_k}_\H^2}},\quad t_{k-1}<t\leq t_k,\quad k=1,\dots,n.
\end{align}
Here,~$t_k\defi T(\tau_k)=\tau_k/R(\tau_k)$ for~$k=0,\dots,n$ and the~$S(t)\defi0$ if~$k=1$.
\end{cor}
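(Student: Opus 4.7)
The plan is to lift the known structure of $v_\tau$ from Theorem~\ref{thm:linear_combination} to $u_t$ via the time reparametrization of Section~\ref{sec:relation}. Specifically, Lemma~\ref{lem:v_tau-to-u_t} with $\expd=1$ tells us that the minimizer $v_\tau$ of $F_\tau(\cdot;f)$ is simultaneously a minimizer of $E_t^{1,1}(\cdot;f)$ for $t = T(\tau) = \tau/R(\tau)$. The proof therefore reduces to computing $R(\tau)$ explicitly from the known form of $v_\tau$ and inverting the relation $t = \tau/R(\tau)$ on each piece $(\tau_{k-1},\tau_k]$.

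For the residual, on the interval $\tau\in(\tau_{k-1},\tau_k]$ the identity $\sgn(\gamma_i)|\gamma_i|=\gamma_i$ combined with \eref{eq:quadr_min} yields
\[
v_\tau - f = -\sum_{i=1}^{k-1}\gamma_i u_i - \tau p_k,
\]
since the contributions of $u_k,\ldots,u_n$ in $v_\tau$ partially cancel against those of $f$. Because $p_k$ lies in the span of $\{u_k,\dots,u_n\}$, which is orthogonal to $\{u_1,\dots,u_{k-1}\}$, the Pythagorean identity gives
\[
R(\tau)^2 = \sum_{i=1}^{k-1}\gamma_i^2\norm{u_i}_\H^2 + \tau^2\norm{p_k}_\H^2.
\]
For $k\geq 2$, squaring $\tau = tR(\tau)$ produces a linear equation in $\tau^2$ whose positive solution is precisely the claimed formula for $S(t)$. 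Consistency of the piecewise definition at the breakpoints, i.e.\ $S(t_k)=\tau_k$ computed from both adjacent branches, follows from $\norm{p_k}_\H^2 = \lambda_k^2\norm{u_k}_\H^2 + \norm{p_{k+1}}_\H^2$ together with $\tau_k\lambda_k=|\gamma_k|$.

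The first branch $k=1$ is degenerate because the leading sum in $R(\tau)^2$ is empty, so $T(\tau)\equiv 1/\norm{p_1}_\H$ on $(0,\tau_1]$. This is precisely the exact-penalization regime of Section~\ref{sec:small_time}: a short duality argument shows that $p_1\in\partial J(f)$, using SUB0 (which yields $\langle p_1,f\rangle\leq J(f)$ via the characterization of $J$ through $K=\partial J(0)$) and the reverse inequality obtained from $J(u_i)=\lambda_i\norm{u_i}_\H^2$ together with subadditivity and positive homogeneity of $J$. Consequently $t_*\geq 1/\norm{p_1}_\H = t_1$, and Theorem~\ref{thm:fsolution} guarantees that $f=v_0$ is a valid minimizer of $E_t^{1,1}(\cdot;f)$ for every $t\in(0,t_1]$, which is the content of the convention $S(t)=0$ on that range.

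The only mildly delicate point is identifying the exact-penalization threshold with $1/\norm{p_1}_\H$ under SUB0 and checking continuity across the breakpoints; once these are settled, the remainder of the proof is a routine algebraic inversion of an explicit function of one variable.
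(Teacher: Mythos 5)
Your proposal is correct and follows essentially the same route as the paper: both compute $T(\tau)=\tau/R(\tau)$ explicitly from \eref{eq:quadr_min} using the orthogonality of the $u_i$'s (equivalently, your residual identity $R(\tau)^2=\sum_{i=1}^{k-1}\gamma_i^2\norm{u_i}_\H^2+\tau^2\norm{p_k}_\H^2$), invert this relation on each interval $(\tau_{k-1},\tau_k]$ for $k\geq 2$, and invoke Lemmas~\ref{lem:u_t-to-v_tau} and~\ref{lem:v_tau-to-u_t} to conclude $u_t=v_{S(t)}$. Your additional verifications — the breakpoint consistency via $\norm{p_k}_\H^2=\lambda_k^2\norm{u_k}_\H^2+\norm{p_{k+1}}_\H^2$ and the identification of the $k=1$ branch with the exact-penalization regime via $p_1\in\partial J(f)$ — are correct elaborations of details the paper's proof leaves implicit.
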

\begin{proof}
From the definition of~$v_\tau$ in~\eref{eq:quadr_min} we easily see, using the orthogonality of the~$u_i$'s, that 
$$T(\tau)=\frac{\tau}{\sqrt{\sum_{i=1}^{k-1}\gamma_i^2\norm{u_i}_\H^2+\tau^2\norm{p_k}_\H^2}},\quad\tau_{k-1}<\tau\leq\tau_k,\quad k=1,\dots,n.$$ 
Inverting this on the intervals~$(\tau_{k-1},\tau_k)$ for~$k\geq 2$ yields the expression for~$S$. Furthermore, it holds that~$t_k=T(\tau_k)<{1}/{\norm{p_k}_\H}$ for~$k\geq 2$ which makes~$S$ well-defined and continuous. Noting that $S$ is the inverse of $T(\tau)$ for $\tau>\tau_1$ and applying Lemmas~\ref{lem:u_t-to-v_tau} and \ref{lem:v_tau-to-u_t} shows that $u_t=v_{S(t)}$.  
\end{proof}
Now we investigate the spectral representations~$\phi_t$ and~$\varphi_\tau$ under the conditions of Theorem~\ref{thm:linear_combination}. By means of Corollary~\ref{cor:linear_combination}, we find
$$\phi_t=-u_t'=-\frac{d}{dt}\sum_{i=k}^n\sgn(\gamma_i)(|\gamma_i|-S(t)\lambda_i)u_i$$
for~$t_{k-1}<t<t_k$. From~\eref{eq:S_linear_comb} it is obvious that~$S$ is continuously differentiable on the intervals~$(t_{k-1},t_k)$ and discontinuous only in~$t_1$. Hence, the measure~$\phi_t$ is singular only in~$t_*\defi t_1=T(\tau_1)=1/\norm{p_1}$ and, since~$S$ is continuously differentiable on~$(t_{k-1},t_k)$, represented by a bounded function, elsewhere. The jump of~$u_t$ in~$t_*$ is given by~$f-v_{\hat{\tau}}$, where~$\hat{\tau}\defi\tau_1$, and hence the singular part of $\phi_t$ reduces to
\begin{align}\label{eq:spectrum_1h}
\phi_{t_*}=f-v_{\hat{\tau}}=\gamma_{1}u_{1}+\sum_{i=2}^n\hat{\tau}\sgn(\gamma_i)\lambda_iu_i.
\end{align}
This can be considered bad news since, on one hand, the spectral representation~$\phi$ of the contrast-invariant problem~$E_t(\cdot;f)$ is not able to isolate an individual eigenvector although it has a delta peak at~$t_*$. On the other hand, the time point~$t_*$ where the peak occurs is independent of the specific eigenvector that vanishes. Thus, it cannot be brought into correspondence with the eigenvalue~$\lambda_1$ or the factor~$\gamma_1$. In contrast, the spectral representation~$\varphi$ is given by
\begin{align}\label{eq:spectrum_qu}
\varphi_\tau=\sum_{k=1}^n\gamma_k u_k\delta_{\tau_k}(\tau),\quad\tau>0
\end{align}
which is a sum of singular Dirac measures and hence a perfect decomposition of the data~$f$ into its components.

\subsection{Affine solution paths of the quadratic problem}\label{sec:affine}

\LB{Theorem \ref{thm:linear_combination} in particular states that if the data $f$ is a linear combination of eigenvectors satisfying additional fairly strong conditions, the corresponding solution path $v_\tau$ is piecewise affine in the time variable. In \cite{burger2016spectral} this has been proven in finite dimension under the condition that $J$ is a \emph{polyhedral semi-norm}. In infinite dimensions and for general data $f$ this behavior cannot be expected. However, we would like to find a condition which assures that the solution path~$v_\tau$ is affine in~$\tau$ at least on a small interval $[0,\hat{\tau}]$. Due to Theorem~\ref{thm:non-unique-affine} this is in one-to-one correspondence to an exact penalization effect of the corresponding contrast invariant problem $E_t^{1,1}(\cdot,f)$ and, hence, to the validity of conditions \eref{eq:range_cond} and \eref{eq:source_cond}. We start with equivalent reformulations of this behavior and give several illustrative examples in finite and infinite dimensions.}

By Moreau's identity (cf.~\cite{rockafellar2015convex} for a finite dimensional version), we find that the minimizer~$v_\tau$ of~$F_\tau(\cdot;f)$ is given by
\begin{align}\label{eq:moreau}
v_\tau=f-\tau\proj_K\left({f}/{\tau}\right).
\end{align}
Here we used that~$J=\chi_K^*$ (cf.~\eref{eq:J_dual},~\eref{eq:char_func}) and let~$\proj_K(\cdot)$ denote the projection on the closed and convex set~$K$ with respect to the Hilbert norm~$\norm{\cdot}_\H$ which is well-defined as $K\cap\H\ni0$.  
\begin{rem}
While Moreau's identity is often formulated in Hilbert spaces or finite dimensions, the identity~$p\in\partial J(u)\iff u\in\partial J^*(p)$, which holds for lower semi-continuous and convex~$J$ defined on a Banach space~$\X$ (cf.~\cite[Ch.~5]{lucchetti2006convexity}), makes it easy to show that it is applicable also in our slightly more general setting. 
\end{rem}
The beauty of the representation~\eref{eq:moreau} lies in the fact that it allows us to study the solution path~$v_\tau$ by investigating the geometric properties of the set~$K$ and the projection onto it.

Using~\eref{eq:moreau}, the residual is given by~$R(\tau)=\tau\norm{\proj_K\left({f}/{\tau}\right)}_\H$ and therefore~$T(\tau)={\norm{\proj_K\left({f}/{\tau}\right)}_\H}^{-1}$. Taking Theorem~\ref{thm:non-unique-affine} into account, the following statements are equivalent:
\begin{subequations}\label{eq:jump_crit}
\begin{equation}
\proj_K(f/\tau)\in\argmin\{\norm{p}_\H\st p\in\partial J(f)\},\quad\forall0<\tau\leq\hat{\tau},
\end{equation}
\begin{equation}
\tau\mapsto v_\tau\defi f-\tau\proj_K(f/\tau)\text{ is affine for }0<\tau\leq\hat{\tau},
\end{equation}
\begin{equation}
t\mapsto T(\tau)\text{ is constant on }(0,\hat{\tau}].
\end{equation}
\end{subequations}
Note that~\eref{eq:jump_crit} is always fulfilled if~$K\subset\R^n$ is polyhedral\footnote{Polyhedral in this context means being the convex hull of a finite set of vectors.} since in this case the solution~$v_\tau$ is piecewise affine with~$v_\tau=f-\tau p$ for~$\tau\in[0,\hat{\tau}]$ and~$p\in\partial J(f)$, as it was shown in~\cite{burger2016spectral} or less general for LASSO /~$\ell^1$ problems in~\cite{donoho1fast,rosset2007piecewise,tibshirani2011solution,bringmann2018homotopy}. However, the condition of a polyhedral~$K$ is neither necessary nor can it be completely waived, as the following examples show.  

\begin{example}\label{ex:ellipse}
Let~$a_1,a_2>0$ with~$a_1\neq a_2$, let~$M=\mathrm{diag}(a_1,a_2)$, and~$J(u)=\sqrt{\langle u,Mu\rangle}$. Then,~$K$ is an ellipse with semi-axes~$\sqrt{a_1}$ and~$\sqrt{a_2}$ and, therefore, not polyhedral. Here,~$\partial J(f)=\lbrace(a_1f_1,a_2f_2)/J(f)\rbrace$ for~$f\neq(0,0)$. If~$f$ is no eigenvector, i.e.,~$f$ is not parallel to a semi-axis, the projection of~$f/\tau$ onto~$K$ does not equal~$\partial J(f)$ for any~$\tau>0$, as it can be easily seen from the corresponding Karush-Kuhn-Tucker conditions. Hence, conditions~\eref{eq:jump_crit} are violated and there is no affine behavior. 
\end{example}

\LB{\begin{example}\label{ex:half_ball}
Let now $J:\R^2\to\R$ be given by
$$
J(u)=\begin{cases}
\norm{u}_1,\quad\text{if }\mathrm{sgn}(u_1)=\mathrm{sgn}(u_2)\\
\norm{u}_2,\quad\text{else}.
\end{cases}
$$
\begin{minipage}{0.65\textwidth}
Then $K$ coincides with the unit square in the first and the third quadrant, and with the unit circle in the remaining quadrants of $\R^2$ (\fref{fig:non-polyhedral_K}). It is easy to see that all vectors in the second and fourth quadrant are eigenvectors and hence \eref{eq:jump_crit} trivially holds. The solution path of vectors in the first and third quadrant is also piecewise affine since the problem coincides with standard $\ell^1$-shrinkage (see the references before) there. Note that $K$ is not polyhedral either.
\end{minipage}
\hfill
\begin{minipage}{0.35\textwidth}
\centering
\begin{tikzpicture}
	\fill[pattern=horizontal lines light gray] (0,0) rectangle (1,1);
   	\fill[pattern=horizontal lines light gray] (0,0) rectangle (-1,-1);
   	\fill[pattern=horizontal lines light gray] (0,0) circle (1);
   	\draw[->, thick] (-1.5,0) -- (1.5,0) node[right] {$u_1$};
    \draw[->, thick] (0,-1.5) -- (0,1.5) node[above] {$u_2$};
	\draw [black,ultra thick](1,0)--(1,1)--(0,1);
   	\draw [black,ultra thick,domain=90:180] plot ({cos(\x)}, {sin(\x)});
	\draw [black,ultra thick](-1,0)--(-1,-1)--(0,-1);
	\draw [black,ultra thick,domain=270:360] plot ({cos(\x)}, {sin(\x)});
	\draw (1,1) node [anchor=south west] {$K$};
	\node at (1,0) [circle,fill,inner sep=1.5pt]{};
	\node at (-1,0) [circle,fill,inner sep=1.5pt]{};
	\node at (0,1) [circle,fill,inner sep=1.5pt]{};
	\node at (0,-1) [circle,fill,inner sep=1.5pt]{};
	\draw (1,0) node [anchor=north west] {$1$};
	\draw (0,1) node [anchor=south west] {$1$};
	\draw (0,-1) node [anchor=north west] {$-1$};
	\draw (-1,0) node [anchor=north east] {$-1$};
\end{tikzpicture}

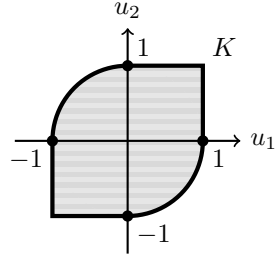
\captionof{figure}{Non-polyhedral $K$\label{fig:non-polyhedral_K}}
\end{minipage}
\end{example}}

\begin{example}\label{eq:l1norm}
Let~$\Omega\subset\R^n$ be open and bounded,~$\X=\H=L^2(\Omega)$ and~$J=\norm{\cdot}_1$. Then~$K=\{u\in L^\infty(\Omega)\st\norm{u}_\infty\leq 1\}$ and if~$f\in L^2(\Omega)$ fulfills~$f(x)\geq c>0$ for almost every~$x\in\Omega$, then for~$0<\tau\leq\hat{\tau}=c$ it holds $\proj_K(f/\tau)(x)=1$ for almost every $x\in\Omega$, hence the jump exists. Obviously, here~$K$ is also not polyhedral since the unit ball in~$L^\infty(\Omega)$ is not generated by the convex combinations of a finite number of functions. 
\end{example}

\begin{example}\label{ex:tv}
Let~$I\subset\R$ be an interval,~$\X=\bv(I)$,~$\H=L^2(I)$, and~$J=\tv$. If~$f$ is piecewise constant, then according to~\cite{cristoferi2016exact} the solution~$v_\tau$ is piecewise affine with~$v_\tau=f-\tau {p}$ for~$\tau\in[0,\hat{\tau}]$ and~${p}\in\partial J(f)$. In~\cite{lasica2017total} the authors prove similar results in two dimensions, using anisotropic total variation as regularization and assuming the data to be piecewise constant on rectangles.
\end{example}

The following theorem characterizes and affine solution path for small times.

\begin{thm}[Affine solution path]
Let~$\hat{p}\in\partial J(f)$ with~$\norm{\hat{p}}_\H=s_*$. Then~$\hat{\tau}$ given by
\begin{align}\label{def:hat_tau}
\hat{\tau}\defi 2\inf_{\substack{q\in K\\\norm{q}_\H\leq\norm{\hat{p}}_\H}}\frac{J(f)-\langle q,f\rangle}{\norm{\hat{p}}_\H^2-\norm{q}_\H^2}
\end{align}
is positive, \LB{if and only if} $v_\tau\defi f-\tau \hat{p}$ is the minimizer of~$F_\tau(\cdot;f)$ for~$\tau\in[0,\hat{\tau}]$.
\end{thm}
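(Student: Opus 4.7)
The plan is to translate affine behavior of $v_\tau$ into a projection problem via Moreau's identity \eref{eq:moreau}. Since $v_\tau = f - \tau\proj_K(f/\tau)$, the equality $v_\tau = f - \tau\hat{p}$ for $\tau > 0$ is equivalent to $\hat{p} = \proj_K(f/\tau)$. Because $\hat{p} \in \partial J(f) \subset K$, I only need to verify the variational characterization of the projection onto a closed convex subset of the Hilbert space $\H$: namely, $\norm{f/\tau - \hat{p}}_\H^2 \leq \norm{f/\tau - q}_\H^2$ for every $q \in K$.

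Next I would expand this inequality, multiply by $\tau > 0$, and invoke the identity $\langle \hat{p}, f\rangle = J(f)$, which follows from $\hat{p} \in \partial J(f)$ together with the absolute one-homogeneity of $J$ via the characterization $p \in \partial J(u) \iff p \in K \text{ and } \langle p, u\rangle = J(u)$. This recasts the projection condition as
$$\tau\bigl(\norm{\hat{p}}_\H^2 - \norm{q}_\H^2\bigr) \leq 2\bigl(J(f) - \langle q, f\rangle\bigr), \quad \forall q \in K.$$
A case distinction on the sign of $\norm{\hat{p}}_\H^2 - \norm{q}_\H^2$ then isolates the binding constraints: whenever $\norm{q}_\H \geq \norm{\hat{p}}_\H$, the right-hand side is non-negative (using $J(f) = \sup_{q \in K}\langle q, f\rangle$) while the left-hand side is non-positive, so the inequality is automatic; whenever $\norm{q}_\H < \norm{\hat{p}}_\H$, rearranging yields $\tau \leq 2(J(f) - \langle q, f\rangle)/(\norm{\hat{p}}_\H^2 - \norm{q}_\H^2)$. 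Thus the set of admissible $\tau \geq 0$ is exactly $[0, \hat{\tau}]$ with $\hat{\tau}$ as in \eref{def:hat_tau}, which furnishes both directions of the asserted equivalence.

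The main subtlety I anticipate lies in the degenerate boundary $\norm{q}_\H = \norm{\hat{p}}_\H$ with $q \neq \hat{p}$, where the defining ratio is indeterminate, and in emphasizing that the hypothesis $\norm{\hat{p}}_\H = s_*$ is precisely what prevents the infimum from collapsing to zero: for any candidate element of $\partial J(f)$ with norm strictly larger than $s_*$, choosing $q$ as the (unique) minimum-norm element of $\partial J(f)$ would give $\langle q, f\rangle = J(f)$ and $\norm{q}_\H < \norm{\hat{p}}_\H$, forcing the admissible interval down to $\{0\}$. Tracking this observation carefully also tacitly justifies why the specific minimum-norm choice of $\hat{p}$ appears in the hypothesis.
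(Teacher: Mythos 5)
Your proposal is correct and follows essentially the same route as the paper: Moreau's identity reduces the claim to $\proj_K(f/\tau)=\hat{p}$, which after expanding the squared norms and using $\langle\hat{p},f\rangle=J(f)$ becomes exactly the inequality $\tau(\norm{\hat{p}}_\H^2-\norm{q}_\H^2)\leq 2(J(f)-\langle q,f\rangle)$ for all $q\in K$, from which the paper declares the equivalence with \eref{def:hat_tau} "obvious." Your case distinction on the sign of $\norm{\hat{p}}_\H^2-\norm{q}_\H^2$ and the remark on the degenerate boundary simply make explicit what the paper leaves to the reader.
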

\begin{proof}
\LB{$v_\tau=f-\tau\hat{p}$ being a minimizer is equivalent to $\proj_K(f/\tau)=\hat{p}$ for $0<\tau\leq\hat{\tau}$. This can be rephrased as $\norm{\hat{p}-f/\tau}_\H^2\leq\norm{q-f/\tau}_\H^2$ for all $q\in K$ and $0<\tau\leq\hat{\tau}$ which is equivalent to 
$$\tau(\norm{\hat{p}}_\H^2-\norm{q}^2_\H)\leq 2(J(f)-\langle q,f\rangle),\quad\forall q\in K,\,0<\tau\leq\hat{\tau}.$$
From here the equivalence with \eref{def:hat_tau} is obvious.}
\end{proof}

The following proposition provides at least a necessary condition for~$\hat{\tau}$ being positive. 

\begin{prop}\label{prop:hat_tau}
Let~$\hat{p}\in\partial J(f)$ with~$\norm{\hat{p}}_\H=s_*$. If~$\hat{p}$ is not an eigenvector and~$\{p\st\langle p,f\rangle = J(f)\}$ is the only supporting hyperplane of~$K$ through~$\hat{p}$, then~$\hat{\tau}=0$.
\end{prop}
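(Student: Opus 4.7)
My plan is a proof by contradiction: assume $\hat\tau>0$, then use Moreau's identity together with the hypothesized uniqueness of the supporting hyperplane at $\hat p$ to force $\hat p$ to be parallel to $f$, and conclude via the 1-homogeneity of $J$ that $\hat p$ is an eigenvector.

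\textbf{Step 1 (translate $\hat\tau>0$ to a projection identity).} The case $f\in\calN(J)$ makes $\hat p=0$ and the claim vacuous, so assume $J(f)>0$. Suppose $\hat\tau>0$. Then by the very definition of $\hat\tau$ (and the equivalent characterization preceding it), the map $\tau\mapsto v_\tau=f-\tau\hat p$ is the minimizer of $F_\tau(\cdot;f)$ for all $\tau\in(0,\hat\tau]$. Comparing with Moreau's identity \eref{eq:moreau} yields
\begin{equation*}
\proj_K(f/\tau)=\hat p,\qquad\forall \tau\in(0,\hat\tau].
\end{equation*}

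\textbf{Step 2 (normal cone characterization).} The standard variational characterization of the projection onto the closed convex set $K$ in the Hilbert space $\H$ states that $\proj_K(f/\tau)=\hat p$ if and only if $f/\tau-\hat p\in N_K(\hat p)$, where $N_K(\hat p)$ denotes the normal cone of $K$ at $\hat p$. Since $\langle f,q\rangle\leq J(f)=\langle f,\hat p\rangle$ for every $q\in K$ (by $J=\chi_K^*$ and $\hat p\in\partial J(f)$), the vector $f$ itself is an outward normal of $K$ at $\hat p$, i.e.\ $f\in N_K(\hat p)$, so $\hat p$ lies on $\partial K$ and the hyperplane $H=\{p\st\langle p,f\rangle=J(f)\}$ supports $K$ at $\hat p$.

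\textbf{Step 3 (exploit uniqueness of the supporting hyperplane).} Supporting hyperplanes of $K$ at $\hat p$ correspond bijectively to rays in $N_K(\hat p)\setminus\{0\}$. By assumption $H$ is the only supporting hyperplane at $\hat p$, hence
\begin{equation*}
N_K(\hat p)=\{sf\st s\geq 0\}.
\end{equation*}
Combined with Step~2, this gives $f/\tau-\hat p=s_\tau f$ for some $s_\tau\geq 0$, so
\begin{equation*}
\hat p=\left(\frac{1}{\tau}-s_\tau\right)f=:\lambda_\tau f.
\end{equation*}
Taking the inner product with $f$ yields $\lambda_\tau\|f\|_\H^2=\langle\hat p,f\rangle=J(f)>0$, hence $\lambda_\tau>0$.

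\textbf{Step 4 (conclude $\hat p$ is an eigenvector).} By absolute 1-homogeneity of $J$ we have $\partial J(\lambda f)=\partial J(f)$ for $\lambda>0$, so
\begin{equation*}
\hat p\in\partial J(f)=\partial J(\lambda_\tau f)=\partial J(\hat p),
\end{equation*}
meaning $1\cdot\hat p\in\partial J(\hat p)$. In the Gelfand-triple setting of \sref{sec:affine} (where $\fwd$ is the embedding, so $\fwd^*\fwd=\id$ in the relevant sense), this says exactly that $\hat p$ is an eigenvector of $J$, contradicting the hypothesis. Therefore $\hat\tau=0$.

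The main subtlety is Step 3: one must know that a single supporting hyperplane forces the normal cone to collapse to a ray. This is standard in finite dimensions, but in the present weak$^*$/Hilbert setting one should verify it via the one-to-one correspondence between exposed rays of $N_K(\hat p)$ and distinct supporting hyperplanes through $\hat p$, which is a clean consequence of Hahn--Banach separation applied to $K-\hat p$ and does not rely on finite dimensionality. Everything else is bookkeeping using Moreau's identity and the 1-homogeneity of $J$.
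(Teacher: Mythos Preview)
Your proof is correct and takes a genuinely different route from the paper's. The paper argues \emph{directly}: since $\hat p$ is not an eigenvector it is not a positive multiple of $f$, so there is a direction $\varphi\perp f$ with $\langle\hat p,\varphi\rangle\leq-\delta\|\varphi\|_\H$; uniqueness of the supporting hyperplane then furnishes a sequence $q_n=\hat p+\varphi_n\in K$ with $\varphi_n\to 0$ approximately along $\varphi$, and one checks by hand that the quotient $\frac{J(f)-\langle q_n,f\rangle}{\|\hat p\|_\H^2-\|q_n\|_\H^2}$ in the definition of $\hat\tau$ tends to $0$. Your argument instead packages the geometry via the normal cone: from $\hat\tau>0$ and Moreau's identity you get $f/\tau-\hat p\in N_K(\hat p)$, the unique-hyperplane hypothesis collapses $N_K(\hat p)$ to the ray $\R_{\geq 0}f$, and positive $0$-homogeneity of $\partial J$ forces $\hat p\in\partial J(\hat p)$. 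The two proofs use the same ingredients (non-eigenvector $\Rightarrow$ $\hat p\not\parallel f$; unique supporting hyperplane $\Rightarrow$ one-dimensional normal cone) but in opposite directions: the paper builds an explicit minimizing sequence for the infimum, whereas you read off the conclusion from the projection characterization. Your route is shorter and more conceptual; the paper's has the advantage of making the role of the infimum in~\eref{def:hat_tau} transparent and avoids invoking the equivalence theorem for $\hat\tau$.
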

\begin{proof}
If~$\hat{p}$ is not an eigenvector, then we know that there is a positive angle between~$\hat p$ and~$f$, i.e., there exists a direction~$\varphi$ orthogonal to~$f$ and~$\delta > 0$ with~$\langle \hat{p}, \varphi \rangle \leq - \delta \Vert \varphi \Vert_\H$. Since the supporting hyperplane is unique, there exists a sequence of directions~$\varphi_n$ with~$\Vert \varphi_n \Vert_\H \rightarrow 0$ --~becoming orthogonal to~$f$ in the limit~--~such that~$q_n=\hat p + \varphi_n \in K$ and
$$  \langle \hat p , \varphi_n \rangle \leq -\frac{\delta}{2} \Vert \varphi_n \Vert_\H,\quad   \frac{\vert \langle \varphi_n, f \rangle\vert }{\Vert \varphi_n \Vert_\H} \rightarrow 0.~$$
Thus, since~$\norm{\varphi_n}_\H<\delta$ for~$n$ large enough,
\begin{align*}
\limsup_{n\to\infty} \frac{J(f) - \langle q_n,f \rangle}{\Vert \hat p \Vert_\H^2 - \Vert q_n \Vert_\H^2} &= \limsup_{n\to\infty} \frac{ \langle \varphi_n, f \rangle }{\Vert \varphi_n \Vert_\H} \frac{\Vert \varphi_n \Vert_\H}{ - 2 \langle \hat p , \varphi_n \rangle -\Vert \varphi_n \Vert_\H^2} \\
&\leq \lim_{n\to\infty} \frac{\vert \langle \varphi_n, f \rangle \vert}{\Vert \varphi_n \Vert_\H} \frac{1}{\delta - \Vert \varphi_n \Vert_\H } = 0.
\end{align*}
\end{proof}

Hence, for sets~$K$ with smooth boundary one will in general not observe a (piecewise) affine behavior of the solution path. This can also be derived from~\cite{holmes1973smoothness} which states that in case of~$\X$ being a Hilbert space the degree of differentiability of the projection map~$\proj_K(\cdot)$ is given by~$d-1$ if~$K$ has a~$C^{d}$-boundary.

\section{Numerical results}\label{sec:numerics}

In the following, we will present numerical experiments that serve to illustrate the theoretical results of this work. The first experiment will use artificially generated data whereas the second one is computed on a real photograph. To be able to compute a spectral representation, we will restrict ourselves to the functionals~$E_t(\cdot;f)$ and~$F_\tau(\cdot;f)$ whose minimization we achieve using the Primal-Dual-Algorithm of Chambolle and Pock~\cite{chambolle2011first}. \LB{For computing the spectral representations, we choose a equidistant sequence of time points and compute the corresponding minimizers with a warm-start initialization. The spectral representations $\phi_t$ or $\varphi_\tau$ are then computed through a first or second order difference quotient, respectively. The complexity of this procedure equal the complexity of solving a parabolic PDE -- like for instance the total variation flow -- via an implicit Euler method.}

\subsection{Sparse deconvolution}

Here, we consider 1D sparse deconvolution of a signal~$f\in\R^n$ which is obtained by convolving a peak signal~$\dataX\in\R^n$ with a gaussian kernel of finite length (cf.~left in \fref{fig:data_deconv}). In this setting,~$\X=\H=\R^n$,~$\fwd$ corresponds to a convolution operator, and~$J$ is given by the 1-norm. The data~$\dataX=-0.1u_1+0.2u_2+0.2u_3-0.4u_4+0.5u_5$ is a linear combination of~$\fwd$-orthogonal singular vectors~$u_i$ all of which have the same singular value~$\lambda\approx 5.137$ and satisfy~\eref{cond:sub0}. The~$u_i$'s simply consist of a single peak of height 1. Note that in this case $\fwd$-orthogonality simply means that the supports of the convolved peaks do not intersect. Hence, we know from Theorem~\ref{thm:linear_combination} and the subsequent remarks that the solution path~$v_\tau$ successively shrinks the singular vectors until their contributions disappear. In particular, there are four critical time points $\tau_i$, $i=1,\dots,4$ -- corresponding to the four different peak heights -- where all peaks of this very height vanish. This is illustrated on the right hand side of \fref{fig:data_deconv}, where the red, pink, and blue markers indicate the height of the corresponding peak at times $\tau_1$, $\tau_2$, and $\tau_3$, respectively. The fourth critical time $\tau_4$ coincides with the extinction time, meaning that the solution is identical to zero. 

\begin{figure}[h!]
\centering
\includegraphics[trim={2cm 2cm 2cm 1.5cm},clip, height=4.8cm, width=0.49\textwidth]{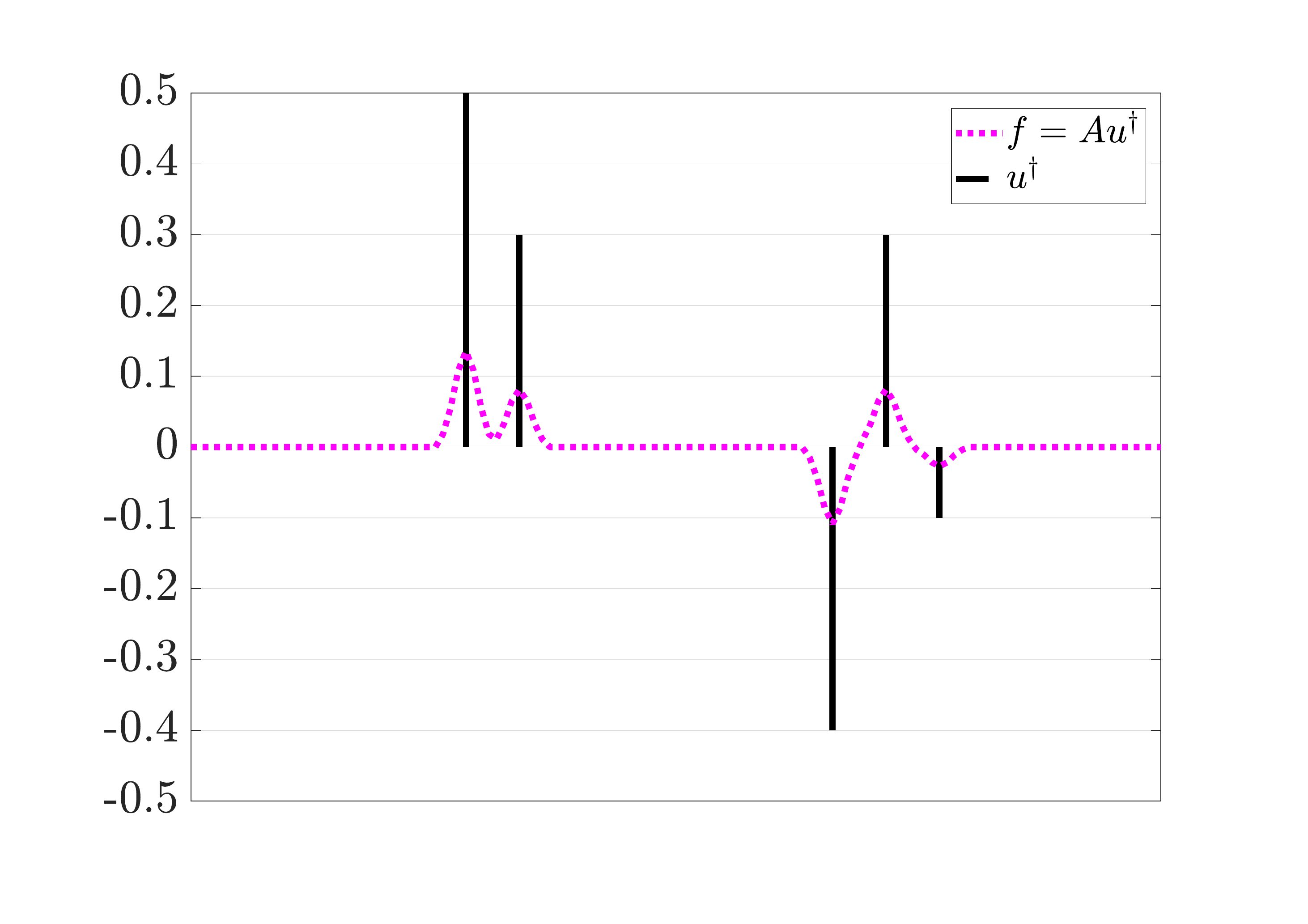}
\hfill
\includegraphics[trim={2cm 2cm 2cm 1.5cm},clip, height=4.8cm, width=0.49\textwidth]{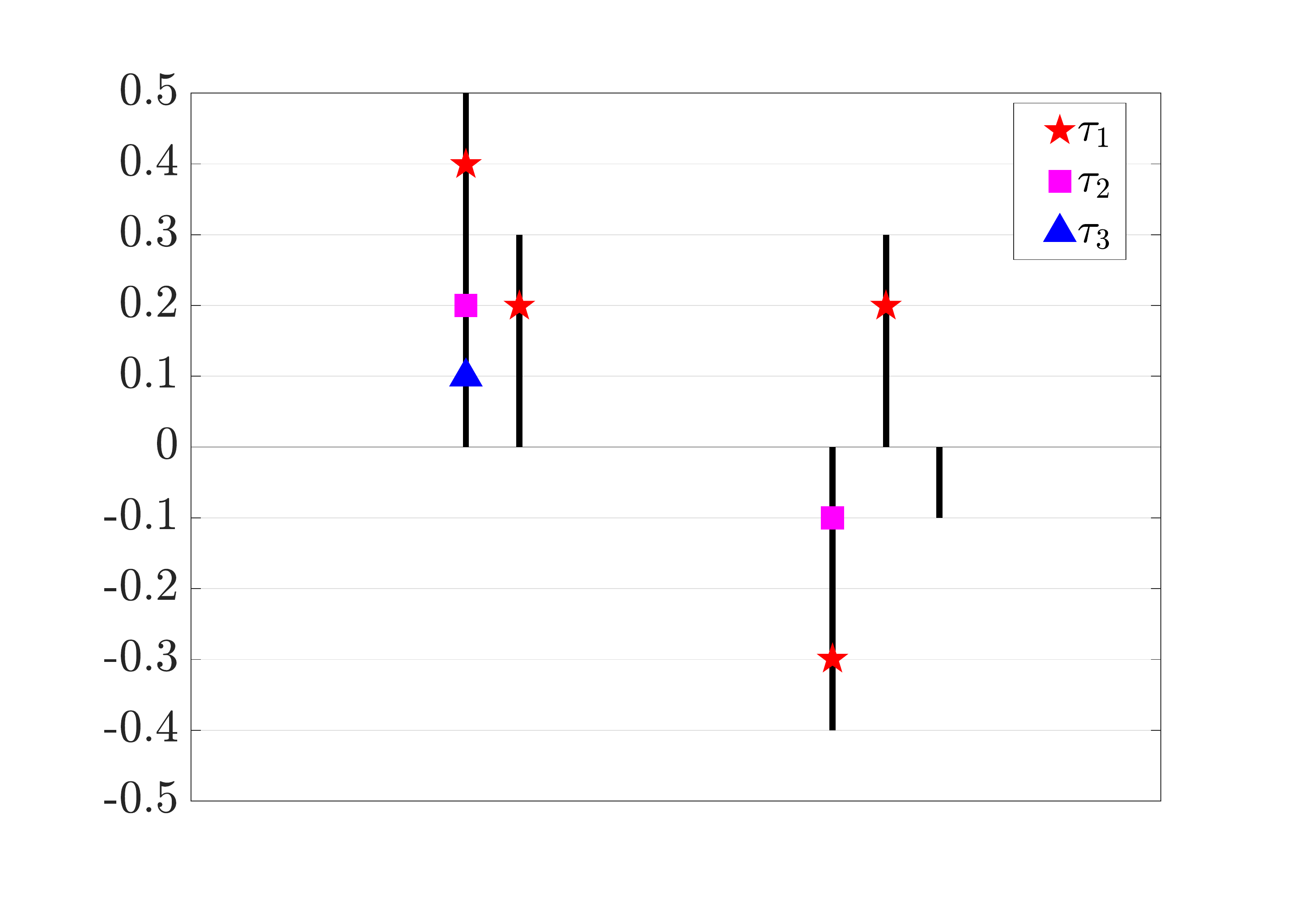}
\caption{\textbf{Left:} Data $\dataX$ and forward data $f$, \textbf{right:} solution at cricital times\label{fig:data_deconv}}
\end{figure}


\begin{figure}[h!]
\centering
\includegraphics[trim={2cm 1cm 1.2cm 0.5cm},clip, height=4cm, width=0.49\textwidth]{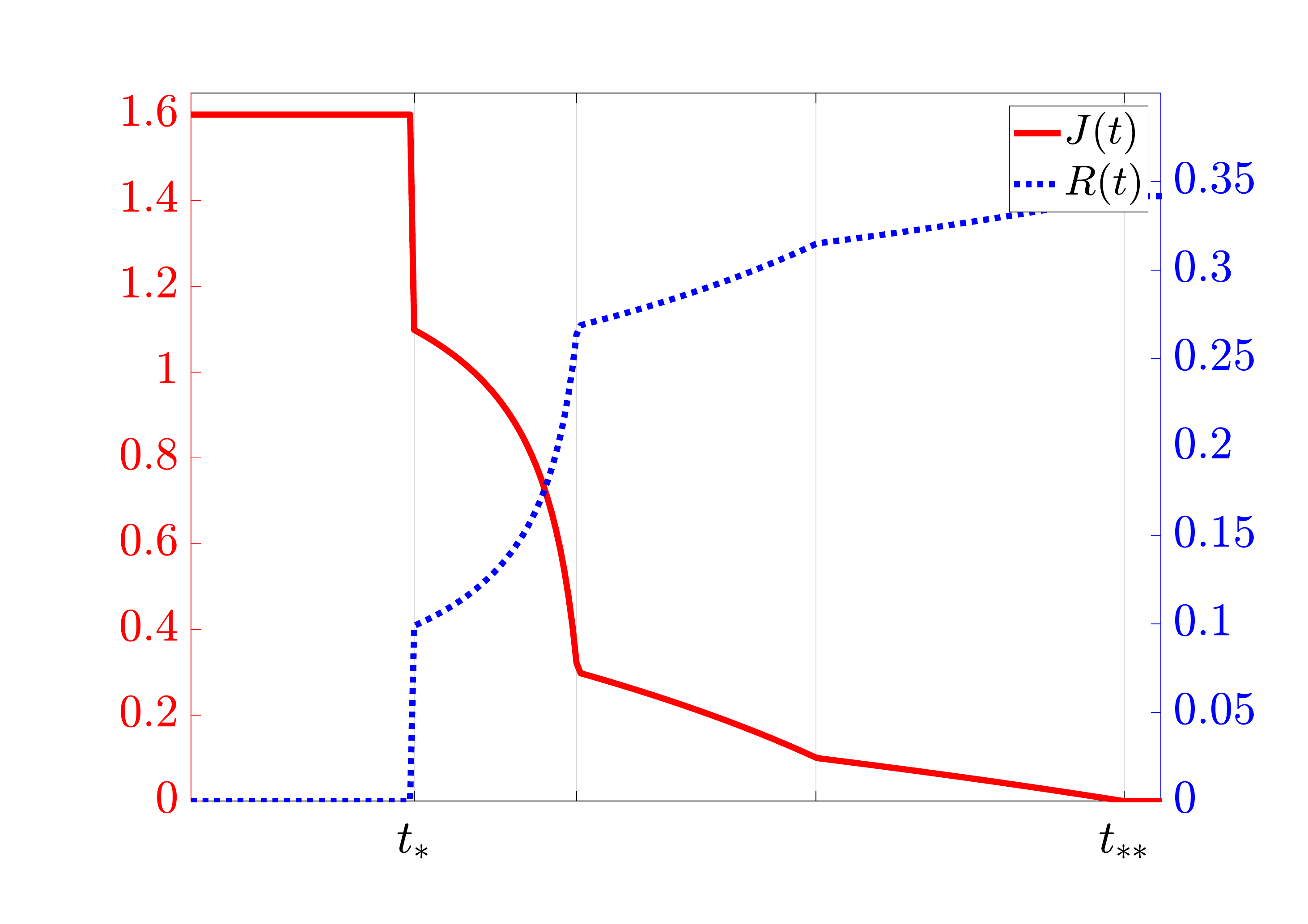}
\hfill
\includegraphics[trim={2cm 1cm 1.2cm 0.5cm},clip, height=4cm, width=0.49\textwidth]{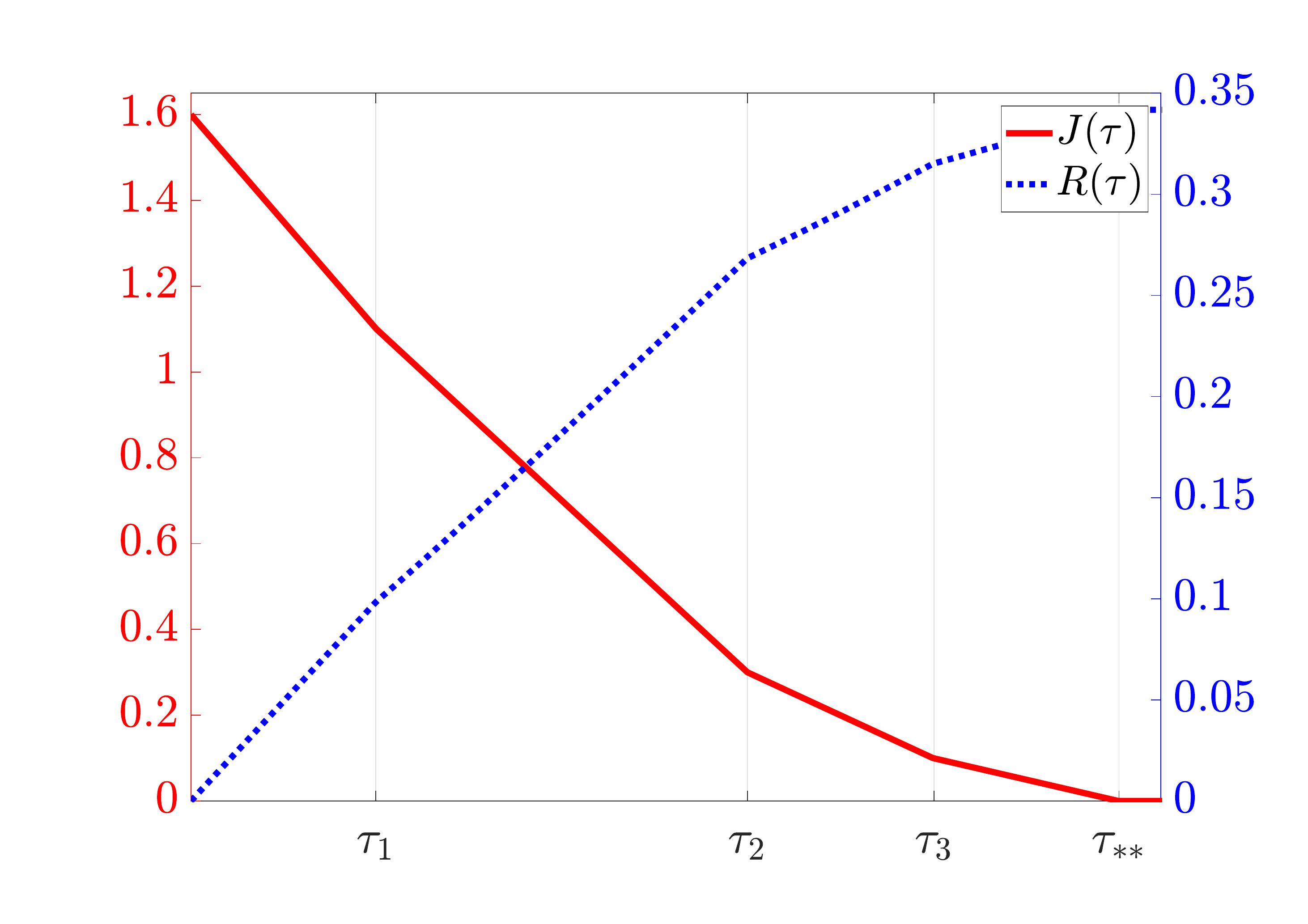}\\
\includegraphics[trim={2cm 1cm 1.2cm 0.5cm},clip, height=4cm, width=0.49\textwidth]{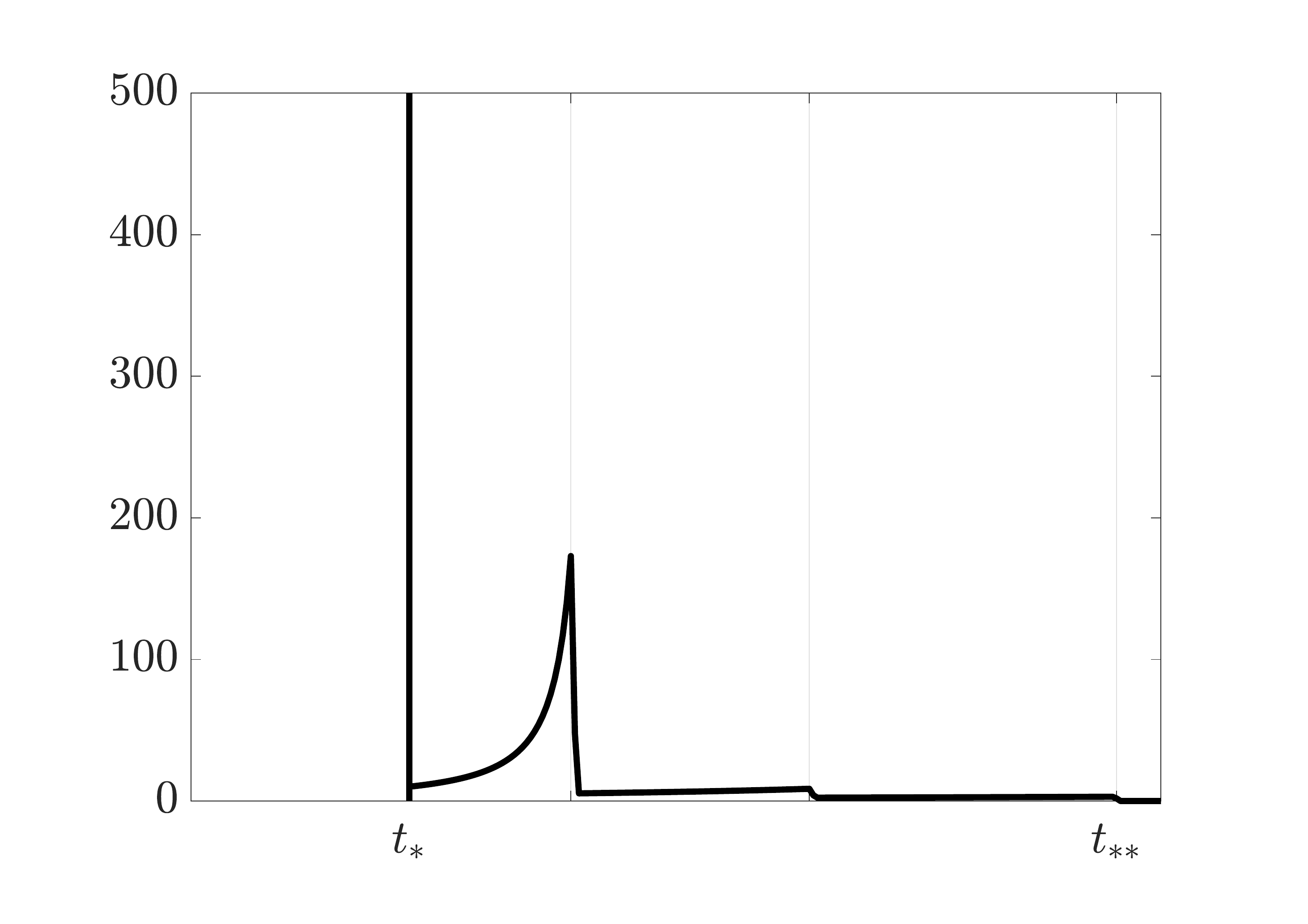}
\hfill
\includegraphics[trim={2cm 1cm 1.2cm 0.5cm},clip, height=4cm, width=0.49\textwidth]{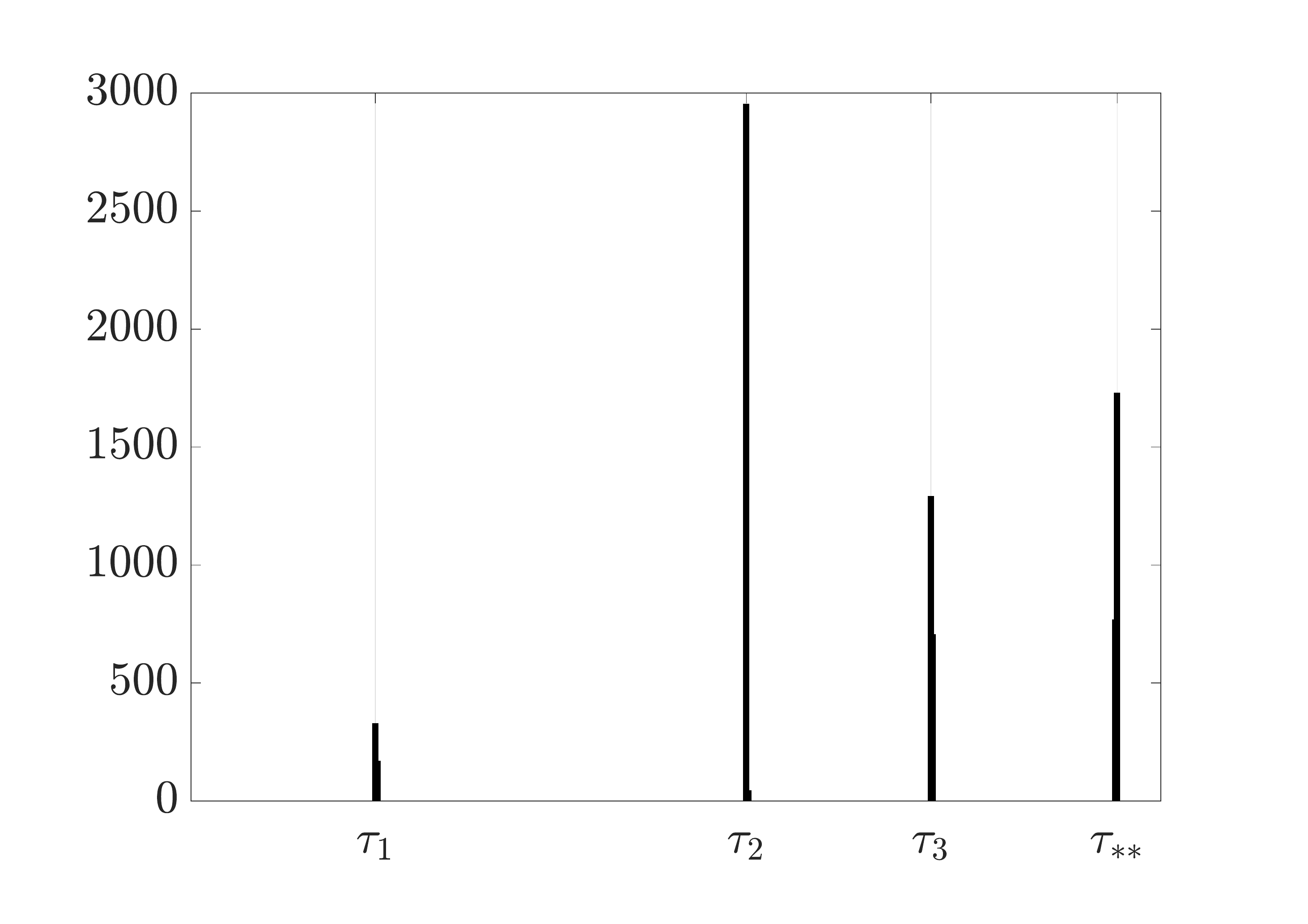}
\caption{Residual, regularizer (\textbf{top}) and spectra (\textbf{bottom}) of $u_t$ (\textbf{left}) and $v_\tau$ (\textbf{right})\label{fig:res_reg_spectra_deconv}}
\end{figure}

The residuals and regularizers of the solution paths, which are shown in the top row of \fref{fig:res_reg_spectra_deconv}, clearly reflect this behavior by having kinks at the critical times. Note that~$R(t)$ and~$J(t)$ indeed jump at~$t_*$ where the forward solution is not unique. Furthermore,~$R(\tau)$ and~$J(\tau)$ are piecewise linear in~$\tau$, as expected. Also the spectra, which are defined the~$1$-norm of the spectral representations~$\phi_t$ and~$\varphi_\tau$ and are depicted in the bottom row of \fref{fig:res_reg_spectra_deconv}, match our analytic results~\eref{eq:spectrum_1h} and~\eref{eq:spectrum_qu} since they posses numerical~$\delta$-peak at~$t_*$ or at the four critical times, respectively. In particular, we see that~$\phi_t$ does not have any atoms for~$t\neq t_*$. Note that the height of the spectral peaks is not informative since the measure at these points is given by an multiple of an Dirac measure which has ``infinite height''.

\subsection{\LB{Total variation scale space}}

Next, we turn to the \eref{mod:ROF} model and the variant with non-squared~$L^2$-norm, respectively. The data~$f$ is given by the ``Barbara'' image and is shown in the top right corner of \fref{fig:filters}. The top row of \fref{fig:res_reg_spectra_denoise} shows the residuals and regularizers of~$u_t$ and~$v_\tau$, respectively. We can observe that there is a positive~$t_*$ and that there are no kinks, meaning there is no visible piecewise behavior of the solution paths. The magnitudes of the spectral representations are given in the bottom row of \fref{fig:res_reg_spectra_denoise}. Note that both spectra, again defined as 1-norm of the spectral representations, behave very regular and do not show any numerical delta peaks. However, the spectrum of $\varphi_\tau$ contains much more information, being encoded in two elevations that are marked in red (dotted) and blue (dashed).

The top row in \fref{fig:filters} shows the corresponding spectral components~$\varphi_\tau$ integrated with respect to~$\tau$ over the red and blue area, respectively (cf.~\eref{eq:filtering}). This procedure can be viewed as band-pass filtering with respect to the nonlinear frequency decomposition~$\varphi_\tau$ and allows to extract and manipulate patterns and textures from the original image. In our example, these images correspond to differently oriented stripe patterns on the table cloth and Barbara's clothing. The spectrum of~$\phi_t$, however, cannot be used for this task since the only two significant parts of the spectrum -- marked in the same fashion -- correspond to very fine and fine structures (cf. second row of \fref{fig:filters}) but do not separate different textures. We have the suspicion that this behavior is explained by the closing remarks of \sref{sec:singular_vectors} according to which the~$\tv$-model with $\expd=1$ is scale-invariant on eigenfunctions in 2D. Indeed further numerical experiments indicate that the one-dimensional ROF model with non-squared data term is capable of capturing different scales. 

Another popular filtering procedure is high-pass and low-pass filtering which corresponds to keeping only the frequency components beyond or until a threshold frequency. The last two rows of \fref{fig:filters} show the corresponding filtered images using the spectra of~$\varphi_\tau$ and~$\phi_t$, respectively. Here, both methods succeed equally well in separating texture and objects. Regarding, high and low-pass filtering, it can be considered a slight advantage of the spectral representation generated by the scale and contrast-invariant model that the magnitude of the spectrum decreases more rapidly and that textures seem to be concentrated more compactly in the spectrum. This can make automatic filtering easier and more robust. 

\begin{figure}[h!]
\centering
\includegraphics[trim={2cm 1cm 1.2cm 0.5cm},clip, height=4cm, width=0.49\textwidth]{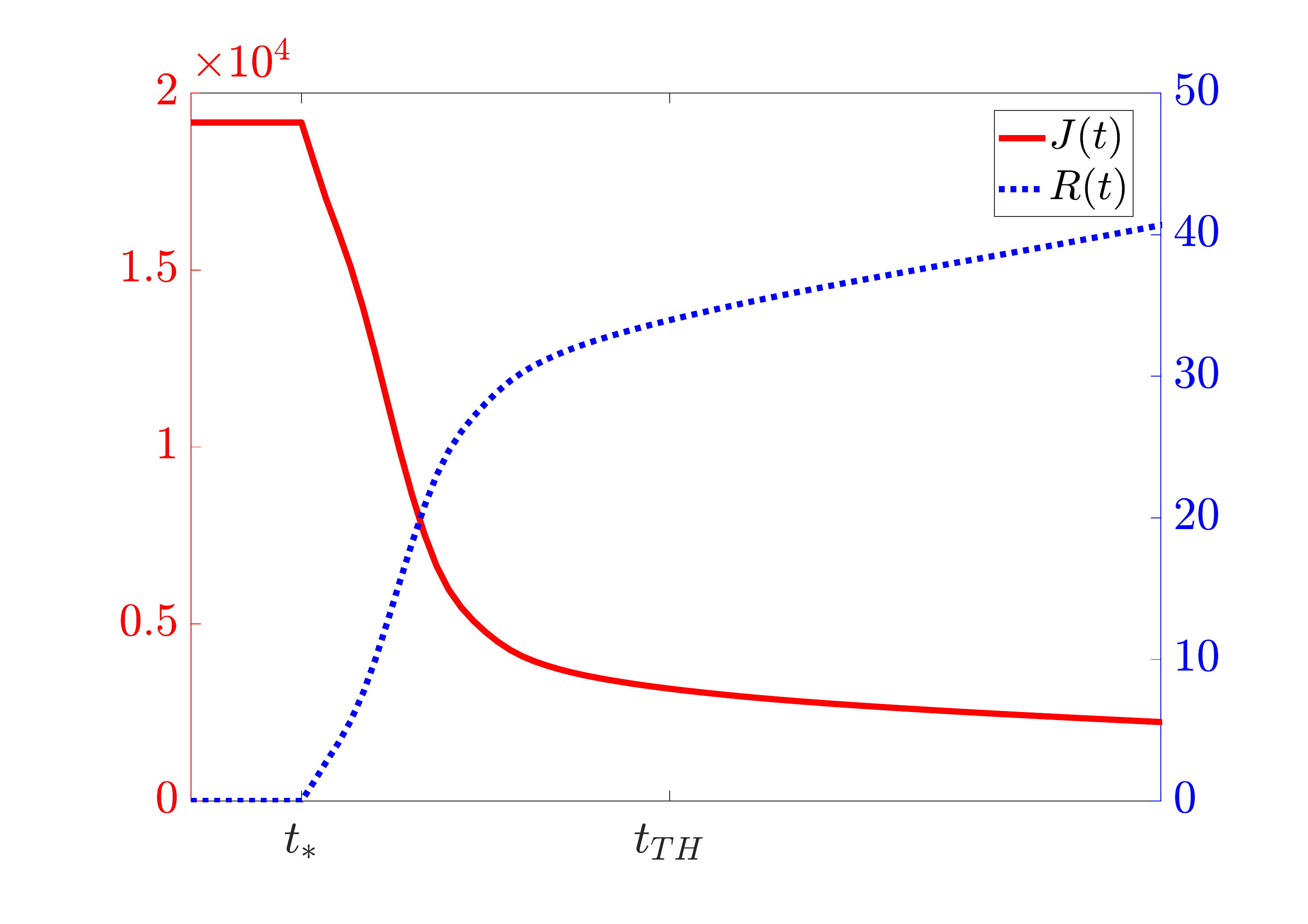}
\hfill
\includegraphics[trim={2cm 1cm 1.2cm 0.5cm},clip, height=4cm, width=0.49\textwidth]{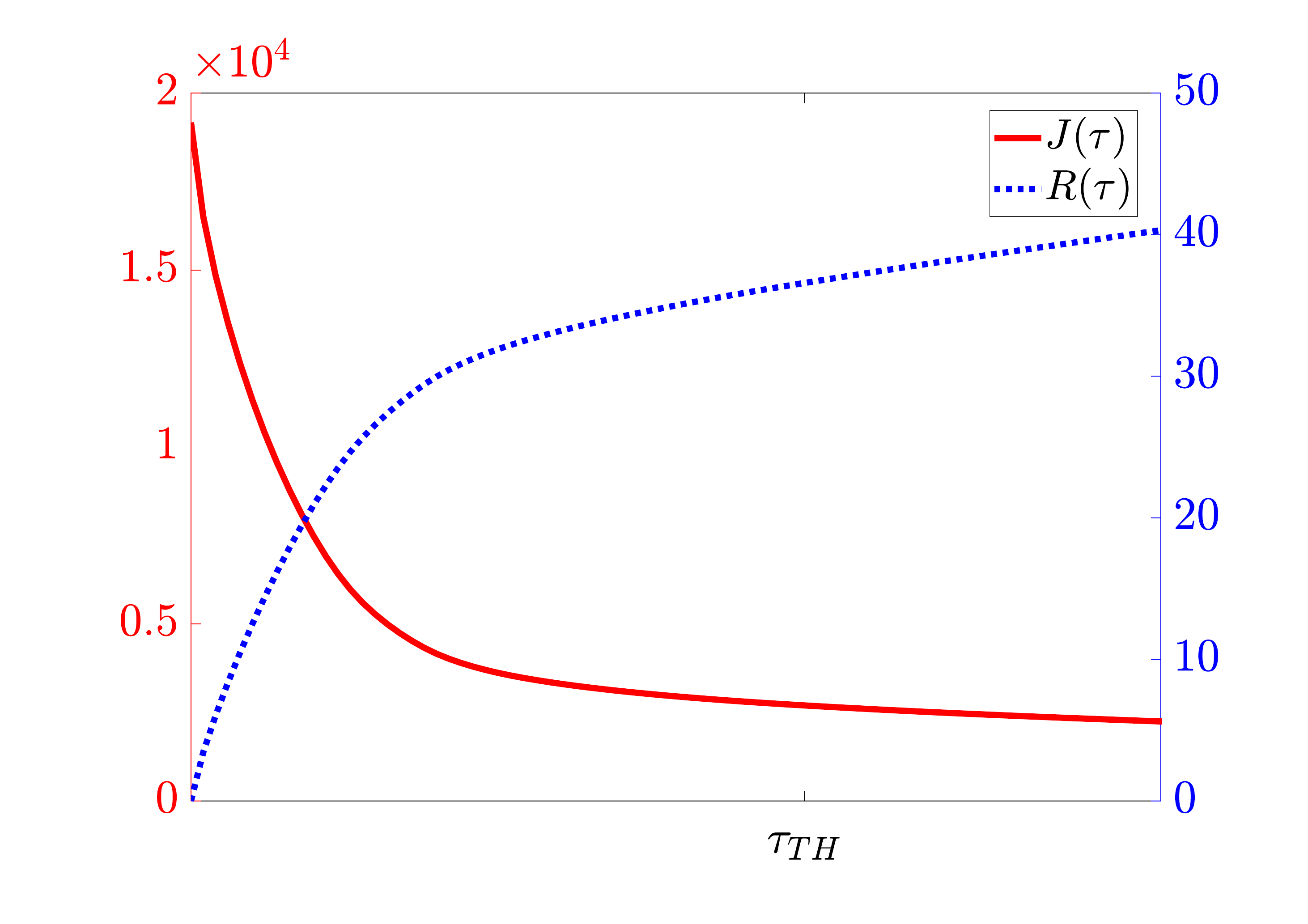}\\
\includegraphics[trim={2cm 1cm 1.2cm 0.5cm},clip, height=4cm, width=0.49\textwidth]{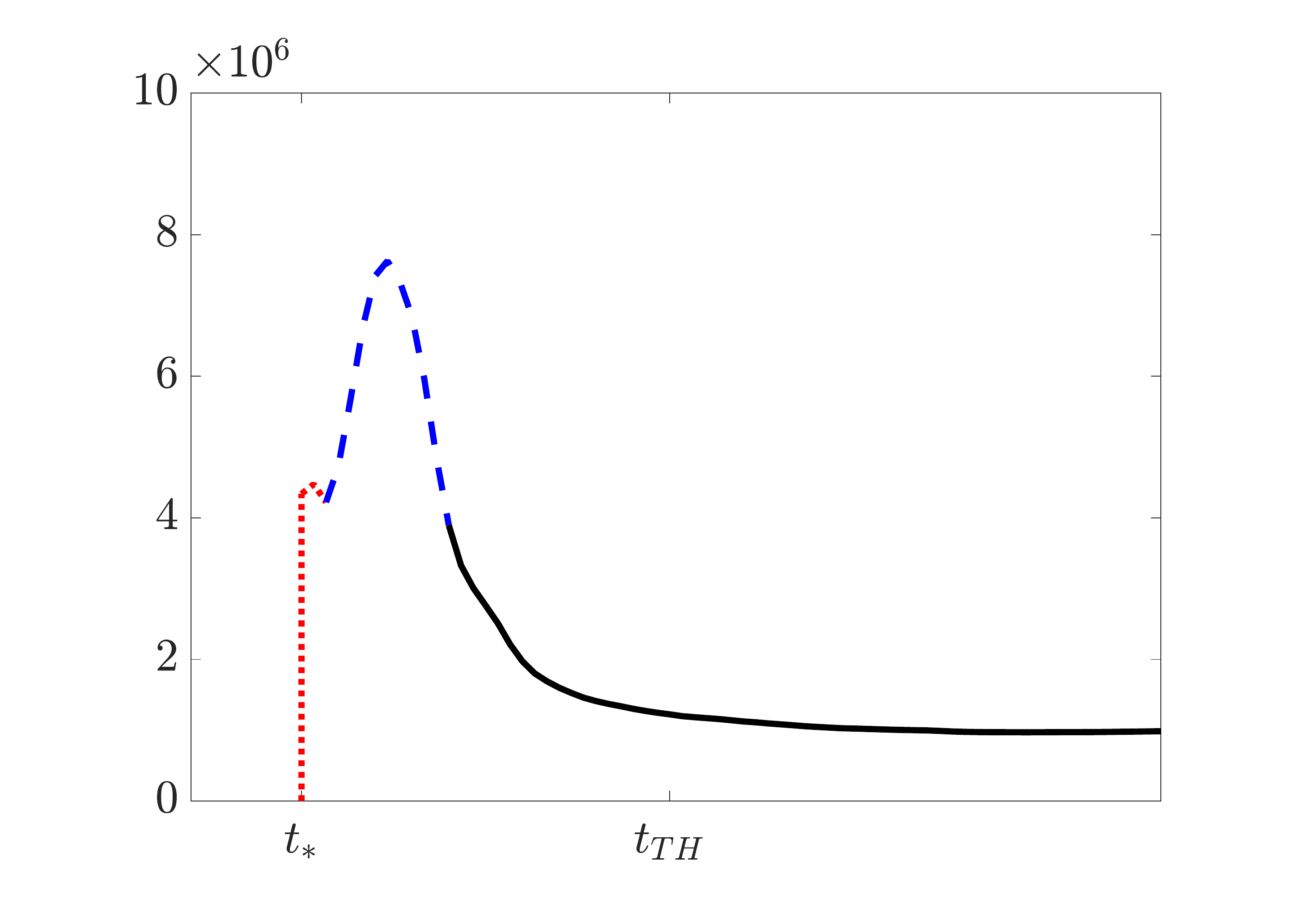}
\hfill
\includegraphics[trim={2cm 1cm 1.2cm 0.5cm},clip, height=4cm, width=0.49\textwidth]{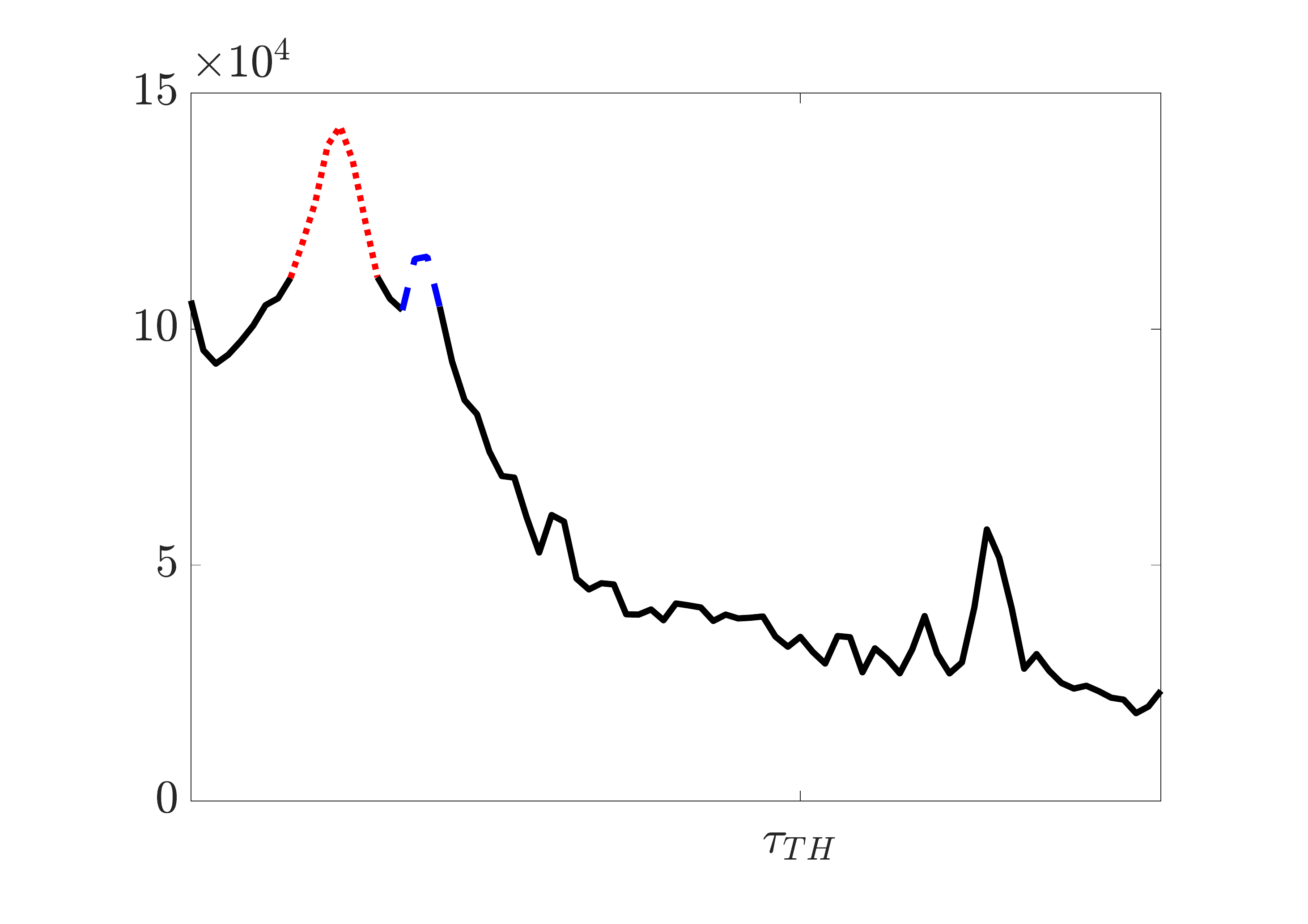}
\caption{Residual, regularizer (\textbf{top}) and spectra (\textbf{bottom}) of $u_t$ (\textbf{left}) and $v_\tau$ (\textbf{right})\label{fig:res_reg_spectra_denoise}}
\end{figure}


\begin{figure}[h]
\newcolumntype{C}{>{\centering\arraybackslash} m{4.5cm} }
\centering
\begin{tabular}{m{.2cm}CCC}
 & \textbf{Red} band-pass & \textbf{Blue} band-pass & Original image\\ 
$\varphi_\tau$ & \includegraphics[width=4.5cm]{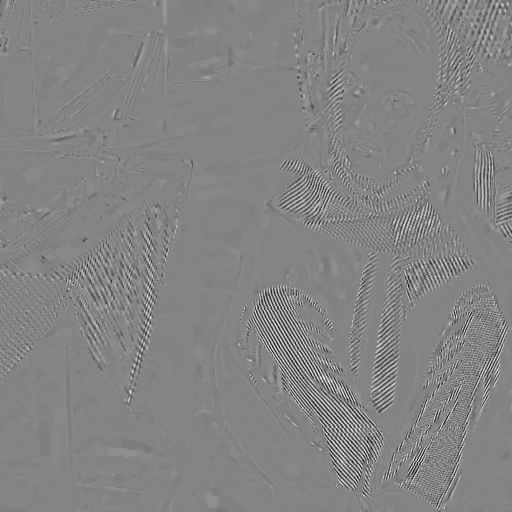} & \includegraphics[width=4.5cm]{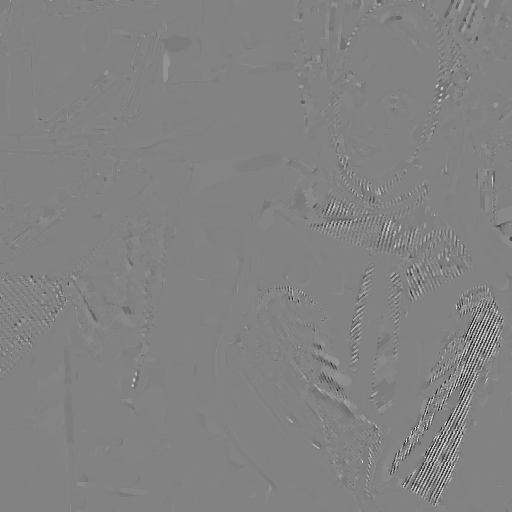} &\includegraphics[width=4.5cm]{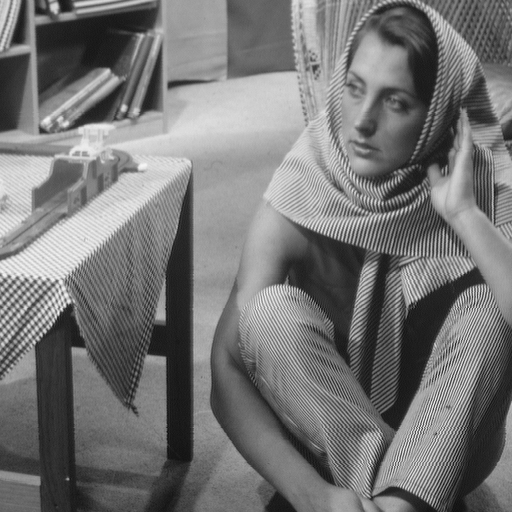}\\ 
$\phi_t$ & \includegraphics[width=4.5cm]{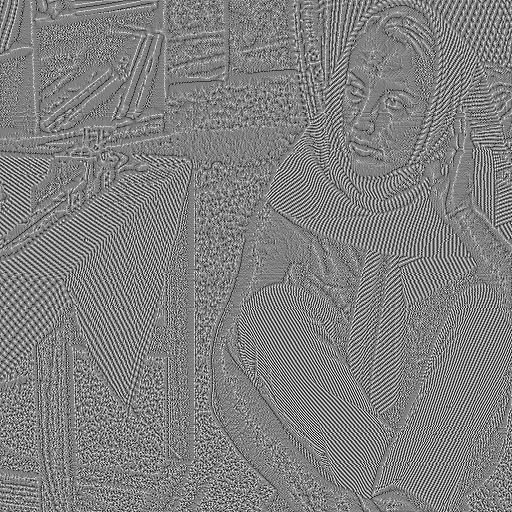}
 & \includegraphics[width=4.5cm]{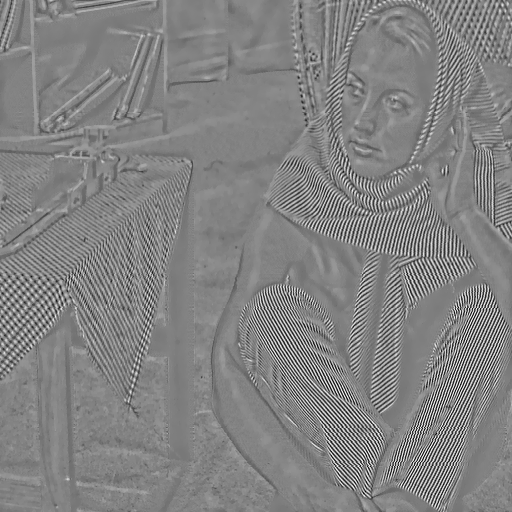}& \\ 
 & High-pass & Low-pass &\\ 
$\varphi_\tau$ & \includegraphics[width=4.5cm]{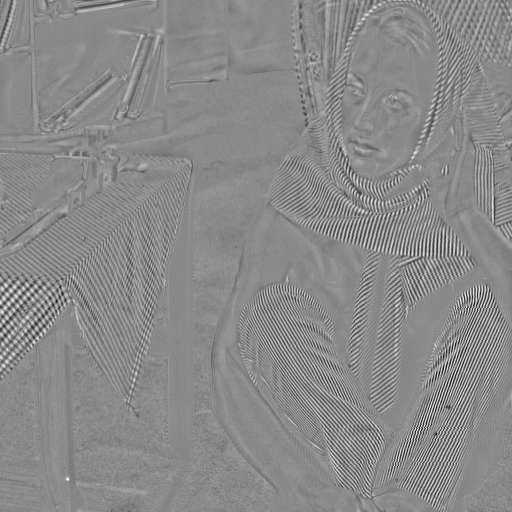} & \includegraphics[width=4.5cm]{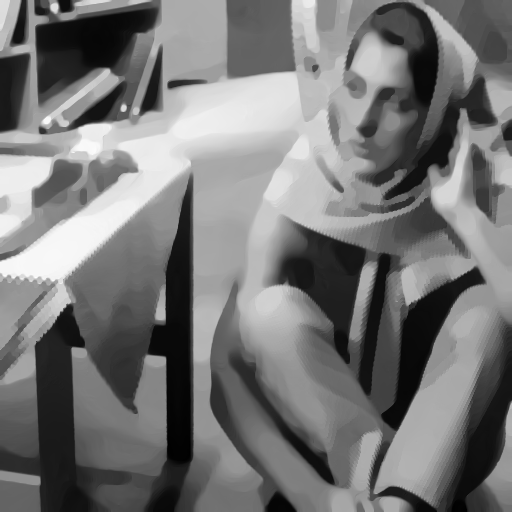} &\\ 
$\phi_t$ & \includegraphics[width=4.5cm]{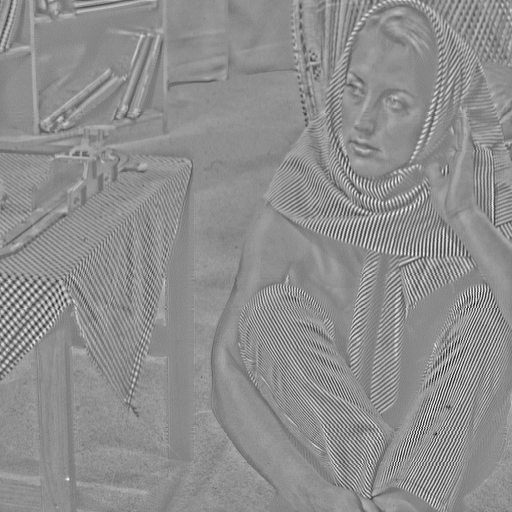} & \includegraphics[width=4.5cm]{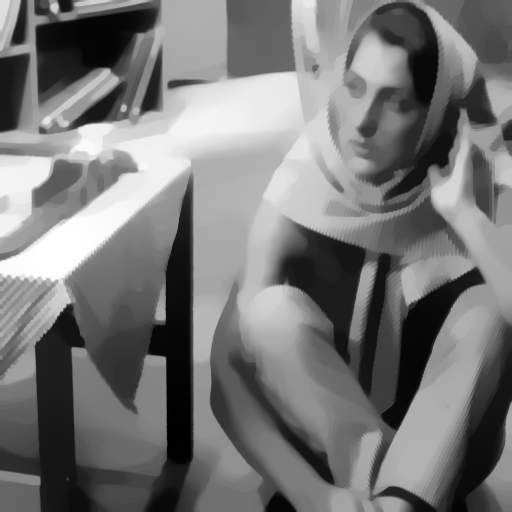}&
\end{tabular} 
\caption{Several filters applied to spectral representations $\varphi_\tau$ and~$\phi_t$\label{fig:filters}}
\end{figure}

\section*{Conclusion}
We have analyzed a family of variational regularization functionals with different powers of the data fidelity and regularization terms, among which the model with quadratic fidelity and absolutely one-homogeneous regularization stands out as the ``standard choice''. Apart from trivial solutions -- which are achieved for very small, respectively, large values of the regularization parameter -- all models generate the same set of minimizers. Therefore, simply aiming at finding a regular approximate solution to the inverse problem \eref{eq:IP}, no specific weighting can be preferred over others. However, if one is interested in the whole solution path and derivatives thereof with respect to the regularization parameter, the choice of the specific weighting becomes relevant. In particular, we have argued why it is necessary to choose the standard weighting in order to obtain nonlinear spectral decompositions. Furthermore, the failure of the contrast-invariant methods to decompose a linear combination of eigenvectors shows that enforcing consistency on a single eigenvector is not enough to define a meaningful spectral representation of arbitrary data.
    
\clearpage    
\section*{Some open questions}  
We conclude this work by pointing out some interesting open questions that are subject to future research.
\begin{enumerate}
\item It is an interesting question whether and how our results connect with generalized Cheeger sets (cf.~\cite{pratelli2017generalized}). It is well-known that a convex set is calibrable if and only if it is a Cheeger set in itself. Furthermore, we have seen that the extinction time of a calibrable set~$\Omega$ under~$\tv$ with data term $\frac{1}{\expd}\norm{u-f}^\expd_{L^2}$ is given by~$|\Omega|^\frac{\expd}{2}/P(\Omega)$ which is precisely the inverse Cheeger constant if~$\Omega$ is a generalized Cheeger set, i.e. a minimizer of ${P(E)}/{|E|^m}$ among all sets $E\subset\Omega$ with~$m\defi\expd/2$, where usually $1-1/n<m<1$ is assumed which corresponds to $2-2/n<\expd<2$. 
\item Furthermore, a relevant open point is to find sufficient conditions for~$\hat{\tau}>0$, meaning that~$v_\tau$ is affine on an interval~$(0,\hat{\tau})$. We suspect that the necessary condition from Proposition~\ref{prop:hat_tau} could also be sufficient but a proof is still pending.
\item Related to the former point is the well-definedness of~$\varphi_\tau$ as a Radon measure for general data. Certainly, a piecewise affine behavior of the solution path guarantees this but this does not occur, in general. However, we have the hope that formula~\eref{eq:moreau} can be used to deduce the regularity of~$v_\tau$ from the regularity of the boundary of the convex set~$K$. 
\end{enumerate}

\ack
This work was supported by the European Union’s Horizon 2020 research and innovation programme under the Marie Sk\l{}odowska-Curie grant agreement No 777826 (NoMADS). The authors acknowledge further support by ERC via Grant EU FP 7—ERC Consolidator Grant 615216 LifeInverse. Furthermore, we would like to express our gratitude to Julian Rasch for his valuable comments which improved our manuscript. Most of this study has been carried out while the authors where affiliated with the University of M\"{u}nster.

\section*{References}
\begingroup
\renewcommand{\section}[2]{}%
\bibliography{bibliography}
\endgroup

\section*{Appendix}
\setcounter{section}{0}
\renewcommand{\thesection}{\Alph{section}}
\numberwithin{equation}{section}
\section{Subdifferentials and absolutely one-homogeneous Functionals}\label{sec:notation}

We say that the convex functional~$J:\X\to\R\cup\{+\infty\}$ is absolutely one-homogeneous if $J(cu)=|c|J(u)$ holds for all~$c\in\R$ and~$u\in\X$.
The subdifferential of~$J$ in~$u\in\dom(J)$ is defined as
\begin{equation}
\partial J(u)\defi\left\lbrace p\in\dualX\st J(u)+\langle p,v-u\rangle\leq J(v)\;\forall v\in\X\right\rbrace
\end{equation}
and can be simplified to  
\begin{equation}\label{eq:subdiff_gen}
\partial J(u)=\left\lbrace p\in\dualX\st \langle p,v\rangle\leq J(v)\;\forall v\in\X,\,\langle p,u\rangle=J(u)\right\rbrace
\end{equation}
since~$J$ is absolutely one homogeneous~\cite{burger2016spectral}. Here,~$(\dualX,\norm{\cdot}_\dualX)$ is the dual space of~$\X$ and~$\langle\cdot,\cdot\rangle$ denotes the dual pairing of~$\dualX$ and~$\X$ which can be identified with the inner product if~$\X$ is a Hilbert space. Note that $J(u)\neq 0$ implies~$0\not\in\partial J(u)$. A special role is played by the subdifferential in zero 
\begin{align}
\partial J(0)=\left\lbrace p\in\dualX\st\langle p,v\rangle\leq J(v)\;\forall v\in\X\right\rbrace.
\end{align}
It allows to write \eref{eq:subdiff_gen} more compactly as
\begin{align}\label{eq:subdiff}
\partial J(u)=\left\lbrace p\in\partial J(0)\st\langle p,u\rangle=J(u)\right\rbrace.
\end{align}
Note that \eref{eq:subdiff} shows that~$\partial J$ is positively zero-homogeneous as set-valued map, meaning that $\partial J(cu)=\partial J(u)$ for all $c>0$. 

Furthermore, we remind the reader that~$J$ can be written as the convex conjugate of the characteristic function of~$\partial J(0)$: 
\begin{align}\label{eq:J_dual}
J(u)=\chi_{\partial J(0)}^*(u)=\sup_{q\in\partial J(0)}\langle q,u\rangle,\quad u\in\X.
\end{align}
Here we used the characteristic function $\chi_M$ of an arbitrary set $M$, which is defined as
\begin{align}\label{eq:char_func}
\chi_M(x)\defi
\begin{cases}
0,\quad &x\in M,\\
\infty,\quad &x\notin M.
\end{cases}
\end{align}
For the sake of completeness we add the (similar but different) definition of the indicator function $\mathbf{1}_\Omega$ of a set $\Omega\subset\R^n$:
\begin{align}\label{eq:ind_func}
\mathbf{1}_\Omega(x)\defi
\begin{cases}
1,\quad&x\in\Omega,\\
0,\quad&x\notin\Omega.
\end{cases}
\end{align}
Since~$J$ is absolutely one-homogeneous, it is a semi-norm on a subspace of $\X$ and it holds~(cf.~\cite{burger2016spectral})
\begin{subequations}\label{eq:properties_J}
\begin{align}
&J(u)\geq0,\quad&&\forall u\in\X\label{eq:po_J}\\
&J(u+v)\leq J(u)+J(v)\quad&&\forall u,v\in\X\label{eq:triangle_J}\\
&J(u+v_0)=J(u),\quad &&\forall u\in\X,v_0\in\calN(J),\label{eq:kernel_J}\\
&p\in\partial J(u)\implies \langle p,v_0\rangle=0,\quad&&\forall v_0\in\calN(J)\label{eq:subgrad_kernel_J}.
\end{align}
\end{subequations}
Finally, from~\eref{eq:subdiff} it follows that the symmetric Bregman distance is non-negative, i.e.,
\begin{align}\label{eq:pos_bregman}
\langle p-q,u-v\rangle\geq 0,\quad p\in\partial J(u),\;q\in\partial J(v).
\end{align}

\section{Generalized orthogonal projections}\label{sec:orth_proj}
\begin{lemma}
The set 
$$ \calN(J) \defi \{ u~\in \X\st J(u) = 0\}~$$
is a closed linear subspace of~$\X$ in the weak$^*$ and strong topology.
\end{lemma}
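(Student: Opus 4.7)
The plan is to verify the three defining properties of a closed linear subspace using only the absolute one-homogeneity of $J$, its convexity (or more directly its triangle inequality), its non-negativity, and its weak$^*$ lower semi-continuity, all of which are assumed in the setting of the paper.

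First I would show $\calN(J)$ is a linear subspace. The zero vector lies in $\calN(J)$ because absolute one-homogeneity with $c=0$ gives $J(0)=0$. Closedness under scalar multiplication is immediate: if $u\in\calN(J)$ and $c\in\R$, then $J(cu)=|c|J(u)=0$. Closedness under addition follows from the triangle inequality~\eref{eq:triangle_J}: if $u,v\in\calN(J)$, then $0\leq J(u+v)\leq J(u)+J(v)=0$, so $J(u+v)=0$.

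Next I would establish weak$^*$ closedness. Since $\X$ is the dual of the separable space $\Y$, bounded sets in $\X$ are weak$^*$ metrizable and it would in principle suffice to argue sequentially; however, the cleanest argument is to observe directly that $\calN(J) = \{u\in\X : J(u)\leq 0\}$ is the sub-level set of the weak$^*$ lower semi-continuous functional $J$, and such sub-level sets are weak$^*$ closed by definition of lower semi-continuity. Strong closedness is then a direct consequence, since strong convergence in $\X$ implies weak$^*$ convergence, so every strongly lower semi-continuous functional has closed sub-level sets in the strong topology as well.

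The argument is entirely routine and there is no real obstacle; the only minor point to be careful about is to phrase the weak$^*$ closedness via sub-level sets rather than sequences, so that the argument applies without any separability or metrizability caveats about the whole space $\X$. All ingredients used (absolute one-homogeneity, $J\geq 0$, triangle inequality, weak$^*$ lower semi-continuity) are part of the standing assumptions on $J$ stated in the paper.
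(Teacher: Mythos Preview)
Your proof is correct and follows essentially the same route as the paper: linearity via absolute one-homogeneity and the triangle inequality, and closedness via weak$^*$ lower semi-continuity (with strong closedness as an immediate consequence). Your sub-level set formulation for weak$^*$ closedness is in fact slightly cleaner than the paper's sequential argument, since it avoids any implicit appeal to metrizability of the weak$^*$ topology.
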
 
\begin{proof}
From the absolute homogeneity we obtain for~$J(u) = 0$ also~$J(c u) = |c| J(u) =0$ for all $c\in\R$. Moreover, the triangle inequality \eref{eq:triangle_J} implies for~$J(u_i)=0$,~$i=1,2$, that
$$ 0 \leq J(u_1 + u_2) \leq J(u_1) + J(u_2) = 0.~$$
Thus, linear combinations of elements in~$\calN(J)$ remain in~$\calN(J)$. Now assume~$u_k$ is a weakly$^*$ convergent sequence in~$\calN(J)$, then the weak$^*$ lower semi-continuity implies for the limit~$u$
$$ 0 \leq J(u) \leq \liminf_{k\to\infty} J(u_k) = 0,~$$
hence~$u \in \calN(J)$. Thus,~$\calN(J)$ is a weakly$^*$ closed subspace which also implies closedness in the strong topology. 
\end{proof} 

The following definition is a generalization of the orthogonal complement in Hilbert spaces to our Banach space setting. 
\begin{definition}\label{defi:orth_compl}
For a subset~$U\subset\X$ we define the~$\fwd$\emph{-orthogonal complement} of~$U$ in~$\X$ as
\begin{align}
U^{\perp,\fwd}\defi\left\lbrace v\in\X\st\langle\fwd v,\fwd u\rangle=0,\quad\forall u\in U\right\rbrace.
\end{align}
\end{definition}

Note that, owing to Assumption~\ref{assmpt:w*-w-c}, the $\fwd$-orthogonal complement of $U$ is a weakly$^*$ closed subspace of $\X$.

\begin{thm}\label{thm:ex_long_time_prob}
The $\fwd$-orthogonal projection~$\calP^\fwd$ given by \eref{eq:orth_proj} is well-defined.
\end{thm}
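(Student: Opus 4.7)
The plan is to split the proof into existence and uniqueness, both following the standard direct method of the calculus of variations, with the two structural assumptions on $\fwd$ doing the essential work.

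For \textbf{existence}, I would fix $f\in\H$ and pick a minimizing sequence $(u_k)\subset\calN(J)$ for the problem $\inf_{u\in\calN(J)}\norm{\fwd u-f}_\H$. The reverse triangle inequality gives $\norm{\fwd u_k}_\H\leq\norm{\fwd u_k-f}_\H+\norm{f}_\H$, so $\norm{\fwd u_k}_\H$ is bounded. Assumption~\ref{assmpt:A-norm} then turns this into a bound on $\norm{u_k}_\X$, since $\norm{\cdot}_\fwd$ and $\norm{\cdot}_\X$ are equivalent on $\calN(J)$. Because $\X$ is the dual of the separable space $\Y$, the Banach--Alaoglu theorem yields a weakly$^*$ convergent subsequence with some limit $\bar u\in\X$. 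The preceding lemma shows $\calN(J)$ is weakly$^*$ closed, so $\bar u\in\calN(J)$, and Assumption~\ref{assmpt:w*-w-c} gives $\fwd u_k\rightharpoonup\fwd\bar u$ in $\H$. Weak lower semi-continuity of the Hilbert norm then yields $\norm{\fwd\bar u-f}_\H\leq\liminf_k\norm{\fwd u_k-f}_\H$, so $\bar u$ attains the infimum.

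For \textbf{uniqueness}, suppose $u_1,u_2\in\calN(J)$ are two minimizers with common minimal value $m$. By the parallelogram identity applied to $\fwd u_1-f$ and $\fwd u_2-f$ in $\H$,
\begin{equation*}
\left\lVert \fwd\tfrac{u_1+u_2}{2}-f\right\rVert_\H^2 = \tfrac{1}{2}\norm{\fwd u_1-f}_\H^2+\tfrac{1}{2}\norm{\fwd u_2-f}_\H^2-\tfrac{1}{4}\norm{\fwd u_1-\fwd u_2}_\H^2.
\end{equation*}
Since $\tfrac{u_1+u_2}{2}\in\calN(J)$ is admissible, the left-hand side is $\geq m^2$, forcing $\fwd u_1=\fwd u_2$. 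But $\fwd$ restricted to $\calN(J)$ is injective (the quantity $\norm{\cdot}_\fwd$ is a norm on $\calN(J)$ by Assumption~\ref{assmpt:A-norm}), so $u_1=u_2$.

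No step looks genuinely hard; the only subtle point is being careful that Assumption~\ref{assmpt:w*-w-c} is what lets us pass from weak$^*$ convergence of $(u_k)$ in $\X$ to something usable in $\H$, and that Assumption~\ref{assmpt:A-norm} is used twice, once to obtain coercivity of the minimizing sequence in $\X$ and once to get injectivity of $\fwd|_{\calN(J)}$ for uniqueness.
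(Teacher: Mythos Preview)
Your proof is correct and follows essentially the same route as the paper's: the existence argument is identical (minimizing sequence, coercivity via Assumption~\ref{assmpt:A-norm}, Banach--Alaoglu, weak$^*$ closedness of $\calN(J)$, Assumption~\ref{assmpt:w*-w-c}, and weak lower semi-continuity of the norm). For uniqueness the paper simply appeals to positive definiteness of the second variation owing to injectivity of $\fwd$ on $\calN(J)$, whereas you spell this out via the parallelogram identity; this is the same strict-convexity idea made explicit, and arguably cleaner since the functional $\norm{\fwd\cdot-f}_\H$ is not smooth at a zero residual.
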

\begin{proof}
Let~$(u_k)\subset\calN(J)$ be a minimizing sequence for the problem. Hence,~$(u_k)$ is bounded with respect to~$\norm{\cdot}_\fwd$ and by Assumption~\ref{assmpt:A-norm} also in~$\norm{\cdot}_\X$. Thus, using Banach-Alaoglu and that~$\calN(J)$ is weakly$^*$ closed, up to a subsequence, the sequence~$(u_k)$ weakly$^*$ converges to some~$u\in\calN(J)$. Furthermore, the sequence~
$\fwd u_k$ converges weakly to~$\fwd u$ by Assumption~\ref{assmpt:w*-w-c} such that the weak lower semi-continuity of~$\norm{\cdot}_\H$ shows that~$u$ is a minimizer. Uniqueness can be established by observing that the second variation of the functional under optimization is positive definite since~$\fwd$ is injective on~$\calN(J)$. 
\end{proof}

\begin{prop}\label{prop:properties_proj}
Let~$\calP^\fwd:\H\rightarrow\X$ be as before. It holds
\begin{enumerate}
\item (Range and Nullspace)~$\ran(\calP^\fwd)=\calN(J)$ and~$\calN(\calP^\fwd)=(\fwd \calN(J))^\perp$´,
\item (Idempotence)~$\calP^\fwd(\fwd\calP^\fwd(f))=\calP^\fwd(f),\quad\forall f\in\H$,
\item (Orthogonality)~$\langle f-\fwd\calP^\fwd(f),\fwd v\rangle=0,\quad\forall f\in\H,\,v\in\calN(J)$,
\item (Linearity)~$\calP^\fwd:\H\to\X$ is linear and bounded,
\item (Self-adjointness)~$\langle f,\fwd\calP^\fwd(g)\rangle=\langle\fwd\calP^\fwd(f),g\rangle,\quad\forall f,g\in\H.$
\end{enumerate}
\end{prop}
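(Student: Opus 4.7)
The plan is to derive everything from a single variational (first-order) optimality condition characterizing the projection, and then bootstrap the five items in the order (3), (1)-(2), (4), (5).

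First I would establish the orthogonality relation (iii). Since $\calN(J)$ is a linear subspace and the map $u \mapsto \tfrac{1}{2}\|\fwd u - f\|_\H^2$ is convex and (Fr\'echet) differentiable with derivative $v \mapsto \langle \fwd u - f, \fwd v\rangle$, the minimizer $u = \calP^\fwd(f)$ satisfies the stationarity condition $\langle \fwd \calP^\fwd(f) - f, \fwd v\rangle = 0$ for every $v \in \calN(J)$. Conversely, by strict convexity (guaranteed by Assumption~\ref{assmpt:A-norm}, which makes $\|\fwd \cdot\|_\H$ an equivalent norm on $\calN(J)$), this condition uniquely characterizes the projection; I will use this characterization repeatedly below.

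Next, for (i) and (ii), the inclusion $\ran(\calP^\fwd) \subset \calN(J)$ is automatic from the definition. For the reverse inclusion, any $u_0 \in \calN(J)$ achieves residual $\|\fwd u_0 - \fwd u_0\|_\H = 0$ in the minimization defining $\calP^\fwd(\fwd u_0)$, so by uniqueness $\calP^\fwd(\fwd u_0) = u_0$; in particular taking $u_0 = \calP^\fwd(f)$ yields the idempotence~(ii). For the null-space identity, $\calP^\fwd(f) = 0$ is, by the uniqueness of the projection and the orthogonality condition just proved, equivalent to $\langle f, \fwd v\rangle = 0$ for every $v \in \calN(J)$, i.e. $f \in (\fwd\calN(J))^\perp$.

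For linearity (iv), given $f_1,f_2 \in \H$ and $\alpha,\beta \in \R$, the element $\alpha \calP^\fwd(f_1)+\beta \calP^\fwd(f_2)$ lies in $\calN(J)$ and satisfies the orthogonality condition with respect to $\alpha f_1 + \beta f_2$ by linearity of the inner product; uniqueness then forces it to equal $\calP^\fwd(\alpha f_1+\beta f_2)$. Boundedness follows from the estimate $\|\fwd \calP^\fwd(f)\|_\H \leq \|\fwd \calP^\fwd(f) - f\|_\H + \|f\|_\H \leq 2\|f\|_\H$ (using $0 \in \calN(J)$ as a competitor) combined with the norm equivalence on $\calN(J)$ from Assumption~\ref{assmpt:A-norm}, giving $\|\calP^\fwd(f)\|_\X \leq 2C\|f\|_\H$. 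Finally, self-adjointness (v) follows by writing the orthogonality identity twice: $\langle f - \fwd\calP^\fwd(f), \fwd\calP^\fwd(g)\rangle = 0$ and $\langle g - \fwd\calP^\fwd(g), \fwd\calP^\fwd(f)\rangle = 0$, so that
\begin{equation*}
\langle f, \fwd\calP^\fwd(g)\rangle = \langle \fwd\calP^\fwd(f), \fwd\calP^\fwd(g)\rangle = \langle \fwd\calP^\fwd(f), g\rangle.
\end{equation*}

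None of the steps above look technically serious; the only mild point of care is to justify that the first-order condition is both necessary and sufficient in this Banach/Hilbert-hybrid setting, which is where Assumption~\ref{assmpt:A-norm} (ensuring strict convexity of the objective on $\calN(J)$) and Assumption~\ref{assmpt:w*-w-c} (implicit through the well-posedness of $\calP^\fwd$ already shown in Theorem~\ref{thm:ex_long_time_prob}) do the work.
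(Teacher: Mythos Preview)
Your proof is correct and follows essentially the same route as the paper: establish the orthogonality relation from the variational characterization of the minimizer, and then read off the remaining items. The only notable difference is in the boundedness argument for (iv): the paper invokes the closed graph theorem (using that $\calN(\calP^\fwd)$ and $\ran(\calP^\fwd)$ are closed), whereas you give a direct estimate $\|\fwd\calP^\fwd(f)\|_\H \leq 2\|f\|_\H$ combined with Assumption~\ref{assmpt:A-norm}. Your argument is more elementary and gives an explicit constant; the paper's version avoids appealing to the norm equivalence on $\calN(J)$ at this step but is less self-contained.
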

\begin{proof}
First note that per definitionem~$\ran(\calP^\fwd)\subset\calN(J)$. The converse inclusion also holds since any~$u\in\calN(J)$ can be written as~$u=\calP^\fwd(\fwd u)$. Now let~$f \in (\fwd\calN(J))^{\perp}$, then for each~$u \in \calN(J)$
$$ \Vert Au - f \Vert_\H^2 = \Vert Au \Vert_\H^2 + \Vert  f \Vert_\H^2 \geq\Vert  f \Vert_\H^2~$$
with equality for~$u=0$, i.e.,~$ \calP^\fwd (f) = 0$. Assume vice versa ~$ \calP^\fwd (f) = 0$, then for~$u \in \calN(J)$ and~$\varepsilon \in \R$ we have
$$ 0 \leq \Vert \varepsilon \fwd u - f\Vert_\H^2 - \Vert f \Vert_H^2 = \varepsilon^2 \Vert \fwd u \Vert_\H^2 - 2 \varepsilon \langle \fwd u , f \rangle .~$$
In the limit~$\varepsilon \rightarrow 0$ we find~$\langle \fwd u , f \rangle = 0$ taking into account the arbitrary sign of~$\varepsilon$. Idempotence is trivial by observing that~$u=\calP^\fwd(f)$ satisfies {$\norm{\fwd u-\fwd\calP^\fwd(f)}_\H=0$}. Orthogonality is obtained by an adaption of the standard proof in the Hilbert space setting. Defining~$a\defi f-\fwd\calP^\fwd(f)\in\H$ it holds for all~$w\in\H$
$$\left\Vert a-\frac{\langle a,w\rangle}{\norm{w}_\H^2}w\right\Vert_\H^2=\norm{a}_\H^2-\frac{\langle a,w\rangle^2}{\norm{w}_\H^2}.$$
If we now set 
$$u\defi\calP^\fwd(f)+\frac{\langle a,\fwd v\rangle}{\norm{v}_\H^2}v\in\calN(J)$$
for~$v\in\calN(J)$ and apply the first equality with~$w\defi\fwd v$, we infer that 
$$\norm{f-\fwd u}_\H^2=\norm{f-\fwd\calP^\fwd(f)}_\H^2-\frac{\langle f-\fwd\calP^\fwd(f),\fwd v\rangle^2}{\norm{\fwd v}_\H^2}.$$
Hence, since~$\calP^\fwd(f)$ is the minimizer in~\eref{eq:long_time_prob}, one can conclude that the scalar product has to vanish. Linearity follow from orthogonality. Since both~$\calN(\calP^\fwd)$ and~$\ran(\calP^\fwd)$ are closed and~$\calP^\fwd$ possesses all properties of a projection, it is straightforward to show that~$\calP^\fwd$ is a closed operator and, hence, bounded by the closed graph theorem. Also self-adjointness follows directly from orthogonality.
\end{proof}

\end{document}